\tikzset{cross/.style={cross out, very thick, draw=black, 
minimum size=12pt, inner sep=0pt, outer sep=-1pt},
cross/.default={2pt}}
\definecolor{brightpink}{rgb}{1.0, 0.0, 0.5}
\definecolor{orange}{rgb}{1.0, 0.49, 0.0}
\definecolor{vert}{rgb}{0.1,0.5,0.2} 
\definecolor{mediumelectricblue}{rgb}{0.01, 0.31, 0.59}
\newtheorem{lemma}{Lemma}[chapter]
\newtheorem{theorem}{Theorem}[chapter]
\newtheorem{corollary}{Corollary}[chapter] 
\newtheorem{definition}{Definition}[chapter]
\newtheorem{example}{Example}[chapter]
\newtheorem{remark}{Remark}[chapter]
\newtheorem{problem}{Problem}[chapter]
\numberwithin{equation}{chapter}
\numberwithin{figure}{chapter}
\DeclareMathOperator{\argmin}{argmin} 
\DeclareMathOperator{\rank}{rank}
\DeclareMathOperator{\tr}{tr} 
\DeclareMathOperator{\diag}{diag}
\def \R{{\mathbb R}}
\def \C{{\mathbb C}}
\def\real{\mathop{\mathrm{Re}}}
\def\imag{\mathop{\mathrm{Im}}}
\newcommand {\mat}  [1] {\left[\begin{array}{#1}}
\newcommand {\rix}      {\end{array}\right]}
\newsavebox\myboxA
\newsavebox\myboxB
\newlength\mylenA
\newcommand*\xoverline[2][0.75]{%
    \sbox{\myboxA}{$\m@th#2$}%
    \setbox\myboxB\null
    \ht\myboxB=\ht\myboxA%
    \dp\myboxB=\dp\myboxA%
    \wd\myboxB=#1\wd\myboxA
    \sbox\myboxB{$\m@th\overline{\copy\myboxB}$}
    \setlength\mylenA{\the\wd\myboxA}
    \addtolength\mylenA{-\the\wd\myboxB}%
    \ifdim\wd\myboxB<\wd\myboxA%
       \rlap{\hskip 0.5\mylenA\usebox\myboxB}{\usebox\myboxA}%
    \else
        \hskip -0.5\mylenA\rlap{\usebox\myboxA}{\hskip 0.5\mylenA\usebox\myboxB}%
    \fi}
\def\ycite[#1#2#3#4#5]#6{\cite[$\mit{#1#2#3#4}$#5]{#6}}
\newtheorem{proposition}{Proposition}
\title{Lecture notes of the C.I.M.E. \\ 
- \\ 
Solving matrix nearness problems via \\ 
Hamiltonian systems, 
matrix factorization, 
and optimization} 
\date{}
\author{
Nicolas Gillis\footnote{
Department of Mathematics and Operational Research, 
University of Mons, 
Rue de Houdain 9, 
7000 Mons, 
Belgium. 
Email: nicolas.gillis@umons.ac.be. 
NG acknowledges the support by ERC starting grant No 679515, and by the Fonds de la Recherche Scientifique - FNRS
and  the  Fonds  Wetenschappelijk  Onderzoek - Vlaanderen (FWO) 
under EOS project O005318F-RG47. 
} 
\and 
Punit Sharma\footnote{Department of Mathematics, Indian Institute of Technology Delhi, Hauz Khas, New Delhi-110016, India. Email: \texttt{punit.sharma@maths.iitd.ac.in}.
PS acknowledges the support of the DST-Inspire Faculty Award (MI01807-G) by Government of India, and Institute SEED Grant (NPN5R) by IIT Delhi.} 
	}
\begin{document}

\normalem

\maketitle

\section*{Preface}

These notes were written for the summer school on ``Recent stability issues for linear dynamical systems - Matrix nearness problems and eigenvalue optimization'' organized by 
Nicola Guglielmi and Christian Lubich at the Centro Internazionale Matematico Estivo (CIME) in September 2021; 
see \url{http://php.math.unifi.it/users/cime/Courses/2021/course.php?codice=20216}. 

The aim of these notes is to summarize our recent contributions to compute nearest stable systems from unstable ones, namely 
\begin{itemize}

\item[\cite{GilS16}] N. Gillis and P. Sharma, ``A semi-analytical approach for the positive semidefinite Procrustes problem", Linear Algebra and its Applications 540, pp. 112-137, 2018.

\item[\cite{gillis2018computing}] N. Gillis, V. Mehrmann and P. Sharma, "Computing nearest stable matrix pairs", Numerical Linear Algebra with Applications 25 (5), e2153, 2018. 

\item[\cite{gillis2018finding}] N. Gillis and P. Sharma, "Finding the nearest positive-real system", SIAM J. on Numerical Analysis 56 (2), pp.\ 1022-1047, 2018. 

\item[\cite{gillis2019approximating}] N. Gillis, M. Karow and P. Sharma, "Approximating the nearest stable discrete-time system", Linear Algebra and its Applications 573, pp.\ 37-53, 2019. 

\item[\cite{gillis2020note}] N. Gillis, M. Karow and P. Sharma, ``A note on approximating the nearest stable discrete-time descriptor system with fixed rank", Applied Numerical Mathematics 148, pp.\ 131-139, 2020. 

\item[\cite{choudhary2020approximating}] N. Choudhary, N. Gillis and P. Sharma, ``On approximating the nearest $\Omega$-stable matrix", Numerical Linear Algebra with Applications 27 (3), e2282, 2020. 

\item[\cite{gillis2020minimal}] N. Gillis and P. Sharma, ``Minimal-norm static feedbacks using dissipative Hamiltonian matrices", Linear Algebra and its Applications 623, pp.\ 258-281, Special issue in honor of Paul Van Dooren, 2021.  

\end{itemize}

Hence most of the material of these notes take its roots in these papers, where the interested reader can find more details.

\paragraph{Matlab code} 

All Matlab codes used in these notes can be found at \url{https://sites.google.com/site/nicolasgillis/code}. 
During the course, we will show how to use the different algorithms with some numerical examples.

\paragraph{Slides}  The slides presented during the summer school are available from \url{https://www.dropbox.com/s/b33wd0j9pyiflar/CIME_Gillis_slides.pdf?dl=0}.

\paragraph{Acknowledgments}

We are grateful to Nicola Guglielmi and Christian Lubich for giving us the opportunity to present our work at the CIME. 

We thank our collaborators, 
Volker Mehrmann, 
Michael Karow and 
Neelam Choudhary for the fruitful and enjoyable moments spent working on these problems.

%
%
%
%
%
%
%
%
%
%

\newpage 

\tableofcontents 

\newpage 


\section*{Notation}

\subsection*{Sets of scalars, vectors, matrices}

\begin{tabular}{ll}
$\mathbb{R}$  &  set of real numbers \\
$\mathbb{R}_+$  &  set of nonnegative real numbers \\
$\mathbb{R}_{++}$  &  set of positive real numbers \\
$\mathbb{R}^n$ &  set of real column vectors of dimension $n$ \\
$\mathbb{R}^{m\times n}$ &  set of real $m$-by-$n$ matrices   \\
$\mathbb{R}^n_+$ &  set of nonnegative real column vectors of dimension $n$ \\
$\mathbb{R}^{m\times n}_+$ &  set of $m$-by-$n$  nonnegative real matrices  \\ 
$\mathbb{N}$  &  set of natural numbers (nonnegative integers) \\
$\mathbb{C}$  &  set of complex numbers \\
$\mathbb C_{-}$ & open left half of the complex plane, $\{\lambda \in \C:~\real{\lambda} < 0\}$ \\ 
$\mathcal{S}_+^n$ &  set of $n$-by-$n$ positive semidefinite (PSD) matrices \\
$\mathbb{S}^{n, n}$ &  set of $n$-by-$n$ continuous  stable matrices \\ & 
(eigenvalues in $\mathbb C_{-} \cup  i \mathbb{R}$, eigenvalues on $i \mathbb{R}$ are semisimple)  \\  
$\mathbb S_d^{n,n}$ & set of $n$-by-$n$ discrete stable matrices \\ 
&  (eigenvalues in the unit circle, eigenvalues of unit modulus are semisimple)  \\ 
\end{tabular}

\subsection*{Submatrices, transpose and inverse}

\begin{tabular}{ll}
$x_i$ of $x(i)$ & $i$th entry of the vector $x$ \\ 
$A_{i:}$ or $A(i,:)$  & $i$th row of $A$ \\
$A_{:j}$ or $A(:,j)$  & $j$th column of $A$ \\
$A_{ij}$ or $A(i,j)$  & entry at position $(i,j)$ of $A$ \\
$A(I,J)$  & submatrix of $A$ with row (resp.\ column) indices in $I$ (resp.\ $J$)  \\
$[A \ B; C \ D]$ & We use Matlab notation: $[A \ B; C \ D] = \left( \begin{array}{cc}
A & B \\ 
C & D 
\end{array} \right)$ \\ 
$A^\top$ & transpose of the matrix $A$, $(A^\top)_{ij} = A_{ji}$ \\
$A^{-1}$ & inverse of the square matrix $A$,  $A^{-1} A = AA^{-1} = I$\\ 
$A^{-\top}$ & inverse of the transpose of the square matrix $A$,  
$A^{-\top} A^\top = A^\top A^{-\top}  = I$ \\ 
\end{tabular}

\subsection*{Norms}


\begin{tabular}{ll}
$\|.\|_2$  &  vector $\ell_2$-norm,  $\|x\|_2 = \sqrt{\sum_{i=1}^n x_{i}^2}$, $x \in \mathbb{R}^{n}$  \\
      &  matrix $\ell_2$-norm,  $\|A\|_2 = \max_{x \in \mathbb{R}^n, \|x\|_2 = 1} {\|Ax\|_2}$, $A \in \mathbb{R}^{m \times n}$  \\
$\|.\|_F$  &  Frobenius norm,  $\|A\|_F = \sqrt{\sum_{i=1}^m \sum_{j=1}^n A_{ij}^2}$, $A \in \mathbb{R}^{m \times n}$  
\end{tabular}

\subsection*{Inequalities}

\begin{tabular}{ll}
$A \geq 0$  &  $A$ is a nonnegative matrix, that is, $A(i,j) \geq 0$ for all $i,j$  \\ 
$A \geq B$  &  This means $A-B \geq 0$  \\ 
$A \succeq 0$  &  $A$ is a PSD matrix \\  
$A \succeq B$  &  $A-B$ is a PSD matrix  \\ 
 $A \succ 0$   &  $A$ is a positive definite matrix  \\ 
 $A \succ B$   &  $A-B$ is a positive definite matrix  \\ 
\end{tabular}

%

\subsection*{Functions and sets on matrices}

\begin{tabular}{ll}
$\left \langle .,. \right \rangle$  &  Euclidean scalar product,  
$\left \langle A,B \right \rangle = \sum_{i=1}^m \sum_{j=1}^n A_{ij} B_{ij}, A, B \in \mathbb{R}^{m \times n}$\\
$\sigma_i(A)$  & $i$th singular values of matrix $A$, in non-decreasing order   \\
$\sigma_{\max}(A)$  & largest singular value of $A$, that is, $\sigma_1(A)$ \\ 
$\sigma_{\min}(A)$  & smallest singular value of $A \in \mathbb{R}^{m \times n}$, that is, $\sigma_{\min(m,n)}(A)$ \\
$\kappa(A)$ & condition number of $A$, $\kappa(A) = \frac{\sigma_{\max}(A)}{\sigma_{\min}(A)}$ \\ 
$\mathcal{P}_{\succ 0}(A)$ & projection of $A$ onto the set of PSD matrices  \\ 
$\mathcal{P}_{\bar S}(A)$ & projection of $A$ onto the set of skew-symmetric matrices  \\ 
$\det(A)$ & determinant of $A$ \\  
$\tr(A)$ & trace of $A$, that is, sum of its diagonal entries \\ 
$\Lambda(A)$ & set of eigenvalues of $A$\\
$\rho(X)$ & spectral radius of $A$,  $\rho(X)=\max_{\lambda \in \Lambda(X)}{|\lambda|}$\\
$\diag(.)$  & For $x \in \mathbb{R}^n$, 
$X = \diag(x) \in \mathbb{R}^{n \times n}$ is a diagonal matrix such that $X_{ii} = x_i$ for all $i$ \\ 
& For $X \in \mathbb{R}^{n \times n}$, $x = \diag(X) \in \mathbb{R}^{n}$ is the vector containing the diagonal entries of $X$ \\ 
$\rank(.)$  &  rank of a matrix \\
\end{tabular}

\subsection*{Special vectors and matrices}  

\begin{tabular}{ll}
$0$  &  matrix of zeros of appropriate dimension \\
$0_{m \times n}$  &  $m$-by-$n$ matrix of zeros  \\
$I_n$  &  identity matrix of dimension $n$ \\
$I$  &  identity matrix of appropriate dimension \\
\end{tabular}

\subsection*{Miscellaneous}  

\begin{tabular}{ll}
$\infty$ & infinity \\ 
$i$ & imaginary number, $i^2 = -1$ \\ 
$\real{\lambda}$ & real part of the complex number $\lambda \in \mathbb{C}$ \\ 
$\imag{\lambda}$ & imaginary part of the complex number $\lambda \in \mathbb{C}$ \\ 
$a$:$b$  &  set  $\{a,a+1,\dots,b-1,b\}$ (for $a$ and $b$ integers with $a \leq b$) \\
$[a,b]$  & closed interval for reals $a \leq b$ \\
$(a,b)$  & open interval for reals $a \leq b$ \\
$\nabla f$  & gradient of the function $f$   \\
$\nabla^2 f$  &  Hessian of the function $f$    \\
$\lceil . \rceil$  & $\lceil x \rceil$ is the smallest integer greater or equal to $x \in \mathbb{R}$   \\
$\lfloor . \rfloor$  & $\lfloor x \rfloor$ is the largest integer smaller or equal to $x \in \mathbb{R}$ \\
$\backslash$ & subtraction of two sets, that is, $R \backslash S$ is the set of elements in $R$ not in  $S$ \\
$|.|$ & cardinality of a set, $|S|$ is the number of elements in $S$   \\
$f(x) = O(g(x))$  
& Big $O$ notation:  \\ & there exists $K$ and $x_0$ such that 
  $f(x) \leq K g(x)$ for all $x \geq x_0$   \\ 
$\inf_{x \in \mathcal{X}} f(x)$  & infimum value of $f(x)$ over the feasible set $\mathcal{X}$ \\  
$\min_{x \in \mathcal{X}} f(x)$  & minimum value of $f(x)$ over the feasible set $\mathcal{X}$ \\ 
$\argmin_{x \in \mathcal{X}} f(x)$  & set of minimizers of $f(x)$ over the feasible set $\mathcal{X}$ \\
\end{tabular}

\subsection*{Abbreviations}

\begin{tabular}{ll}
a.k.a.    &  also known as \\ 
w.l.o.g.  &  without loss of generality  \\
w.r.t. & with respect to 
\end{tabular}

\subsection*{Acronyms}

\begin{tabular}{ll} 
BCD & block coordinate descent \\ 
DH  & dissipative Hamiltonian \\ 
ESPR& extended strictly positive real \\ 
FGM & fast gradient method  \\
IPM & interior-point method \\
LMI & linear matrix inequality \\
LTI & linear-time invariant  \\
PGM & projected gradient descent \\ 
PH  & port-Hamiltonian \\ 
PR  & positive real \\ 
PSD & positive semidefinite  \\
SDP & semidefinite program(ming) \\
SSDP& sequential semidefinite programming \\
SPR & strictly positive real   \\ 
\end{tabular}


\chapter{Introduction}   \label{chap:intro}

The main goal of these lecture notes is to survey a series of recent 
works~\cite{GilS16, 
gillis2018computing, 
gillis2018finding,  
gillis2019approximating, 
gillis2020note, 
choudhary2020approximating, 
gillis2020minimal} 
that aim at solving several nearness problems
for a given system. 
As we will see, these problems can be written as distance problems of matrices or matrix pencils. 
To solve them, this series of recent works rely on a two-step approach:  
\begin{enumerate}

\item {\it Parametrization}:~Parametrize the system using a Port-Hamiltonian representation where stability is guaranteed via convex constraints on the parameters. 

\item {\it Algorithmic solution}:~Apply standard non-linear optimization algorithms to optimize these parameters, minimizing the distance between the given system and the sought parametrized stable system.  

\end{enumerate}

Before we delve into the technical details, let us introduce and motivate the study of this problem. In the following, we recall 
what is a dynamical system, 
how its stability is characterized, and 
why finding a nearby system with a specific system property is useful.

 \section{Dynamical systems}

A general dynamical system in the state-space form consist of the following two equations~\cite{Kha92,KunM06,Dua10,Broc15,Kai80}:
\begin{align}
\label{eq1:sys}
\begin{split}
f(\dot{x}(t), x(t),u(t),t)=0,
\\
g( x(t),u(t),y(t),t) = 0& \; ,\quad t \in I,
\end{split}
\end{align}
and an initial condition $x(t_0)=x_0$, where $\dot{x}=\frac{dx}{dt}$ and 
\begin{itemize}
\item $I=[t_0,t_f]$ is the time interval of interest.
\item $x(t):I \mapsto \mathcal X$ is the the state function, where $\mathcal X$ is the state space ($\mathcal X \subseteq \R^{n}$ open set), 
\item $\dot{x}(t):I \mapsto \dot{ \mathcal X}$, with $\dot{\mathcal X} \subseteq \R^{n}$ being an open set,  
\item  $u(t):I \mapsto \mathcal U$ is the control or input function, with  
$\mathcal U$ being the control space ($\mathcal U \subseteq \R^{m}$ with a metric), 
\item $y(t):I \mapsto \mathcal Y$ is the output function, with  
$\mathcal Y$ being the output space ($\mathcal Y \subseteq \R^{p}$ with a metric), 
\item $f:\dot{\mathcal X} \times \mathcal X \times \mathcal U \times I \mapsto \R^n$ defines the dynamics of the system, 
\item $g:\mathcal X \times \mathcal U \times \mathcal Y \times I \mapsto \R^p$ is the output map, and 
\item $n$ is the order of the system (state space dimension).
\end{itemize}


Often systems such as~\eqref{eq1:sys} are visualized as a block diagram; see Figure~\ref{Blockdiagram}. 
\begin{figure}[ht!]
\begin{center}
\includegraphics[width=7.5cm]{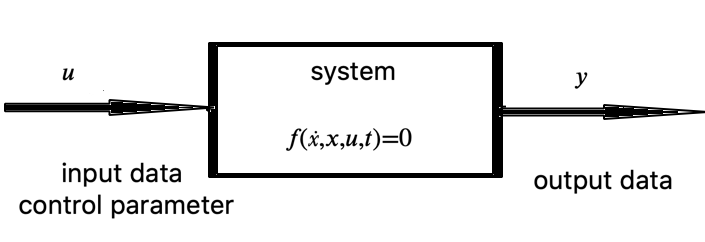}
\caption{Block diagram of a control system.   
\label{Blockdiagram}}
\end{center}
\end{figure} 

In the following examples and in many other situations, the system is not exactly known: the state $x$ and its derivative $\dot{x}$ cannot be measured. Therefore, the system is considered as a ``black box". 

\begin{example}{\rm
Influence of a change in the interest rate by the National Central Bank (of USA) on the stock market (e.g., Dow Jones Index), currency exchange rate (Euro vs US \$ ), and other financial markets. 
In terms of system theory, we have

$u$ is the interest rate, 

$x$ is the market parameters (stock, foreign exchange, capital, etc.), and 

$y$ contains the indices (Dow Jones, exchange rate, etc.)
}
\end{example}

\begin{example}{\rm
Cruise control of a car: it is a device designed to maintain vehicle speed at a constant desired speed provided by the driver.
In terms of system theory, we have

$u$ is the engines throttle position which determines how much power the engine delivers, 

$x$ contains the car parameters, and 

$y$ is the speed of the car. 
}
\end{example}

The main objective of control theory is to control a system, so its output follows a desired control signal, called the reference, which may be a fixed or changing value. Mathematically, given an initial value $x_0$ and target $x_1$, can we find an admissible input $\hat u \in \mathcal U$ such that there exists $t_1 \geq t_0$ with $x_1=x(t_1;\hat u)$, where $x(t;\hat u)$ is the solution trajectory of 
\[
f(\dot{x},x,\hat u,t)=0, \quad x(t_0)=x_0.
\]
Often, the target is $x_1=0$, that is, $x$ describes the deviation from a nominal path. 

The study of control systems can be divided into two branches:

\begin{itemize}
\item[] \emph{Linear Control Theory}:~ Linear control systems are governed by linear differential equations where the output is proportional to the input. They are divided into two subclasses; linear time-variant (if the input-output characteristics change with time) and linear time-invariant (if the input-output characteristics do not change with time).

\item[] \emph{Nonlinear Control Theory}:~ Nonlinear differential equations often govern nonlinear systems. These systems apply to more real-world systems because all real control systems are nonlinear. A few mathematical techniques have been developed to handle them, are more complicated and much less general. If only solutions near a stable point are of interest, the nonlinear system can often be linearized by approximating them using a linear system using perturbation theory. 
\end{itemize}

In these lecture notes, we mainly focus on the continuous-time and discrete-time linear time-invariant (LTI) systems. These systems can be well described, 
leading to solutions for system response and design techniques for most systems of interest~\cite{Antm97,Broc15,Kai80}.

\begin{definition} {\rm (Continuous-time LTI systems)}
A special case of~\eqref{eq1:sys} is the following continuous-time LTI  system 
\begin{align}
\label{eq2:sys}
\begin{split}
E\dot{x}(t) = & \; Ax(t)+Bu(t),
\\
y(t) = & \; Cx(t)+Du(t),
\end{split}
\end{align}
on the unbounded interval $t \in [t_0,\infty)$. Here, $A,E \in \R^{n,n}$, $B \in \R^{n,m}$, $C \in \R^{p,n}$, and $D \in \R^{p,m}$
are the coefficient matrices of the system, $x(t) \in \R^n$ is the state vector, $u(t) \in \R^m$ is the control input vector, and $y(t)\in \R^p$
is the measured output vector. 
\end{definition}

\begin{definition}{\rm (Discrete-time LTI systems)}
We also consider a class of discrete-time LTI systems of the form
\begin{align}
\label{eq3:sys}
\begin{split}
Ex(k+1) = & \; Ax(k)+Bu(k),
\\
y(k) = & \; Cx(k)+Du(k), \quad k \in \mathbb N,
\end{split}
\end{align}
where  $ \mathbb N$ is the set of nonnegative integers. 
The constant matrices $A,E \in \R^{n,n}$, $B \in \R^{n,m}$, $C \in \R^{p,n}$, and $D \in \R^{p,m}$
are the coefficient matrices of the system, $x(k) \in \R^n$ is the state vector, $u(k) \in \R^m$ is the control input vector, and $y(k) \in \R^p$
is the measured output vector. 
\end{definition}

The linear systems in~\eqref{eq2:sys} and~\eqref{eq3:sys} are  called \emph{standard systems} when $E=I_n$, where $I_n$ is the identity matrix
of size $n\times n$,
and a \emph{descriptor system} when $E$ is not identity.
We use the matrix quintuple $(E,A,B,C,D)$ to refer to a descriptor system and quadruple $(A,B,C,D)$ to refer to a standard system in the form~\eqref{eq2:sys} or~\eqref{eq3:sys}.

 \section{Stability}
 
Stability is an essential property in a control system which intuitively requires ``the output to converge to the desired value", as opposed to diverge from/oscillate around it. 
Consider a dynamical system of the form~\eqref{eq2:sys} for two different initial states, but the same input $u$. The system is called stable if the initial states are chosen close enough together, then the states remain close to each other for all time $t>0$. 

In this section, we investigate the stability of LTI systems. Since the theory of the stability of LTI descriptor systems is ambiguous, we consider the standard and descriptor LTI systems separately. A central figure in the study of stability of dynamical systems is Aleksandr Mikhailovich Lyapunov (1857-1918, student of Pafnuty Chebyshev in St.~Petersburg University).

\subsection{Standard systems}

The stability of a standard LTI system $(A,B,C,D)$ is well studied.
To study stability of LTI systems it is sufficient to consider $u(t)\equiv 0$, or equivalently the autonomous system $\dot{x}(t)=A x(t)$, $x(0)=x_0$.

\begin{definition}\label{def_stan_stab}
Let $\bar x$ and $\tilde x$ be two solutions of $\dot{x}=Ax$. Then
\begin{itemize}
\item An LTI system $(A,B,C,D)$/an autonomous system $\dot{x}(t)=Ax(t)$/a matrix $A \in \R^{n,n}$, is called continuous-time \emph{stable} (Lyapunov stable) 
if for every $\epsilon > 0$ there is a $\delta > 0$ such that 
\[
\|\bar x(0)-\tilde x(0)\| < \delta \text{ implies } 
\|\bar x(t)-\tilde x(t)\| < \epsilon \text{ for all } t > 0. 
\] 

\item It is called continuous-time \emph{asymptotically stable} if it is stable and  
\[
\lim_{t \rightarrow \infty} x(t)=0
\] 
for all solutions of $\dot{x}=Ax$.

\item It is called \emph{unstable}, if it is not stable.
\end{itemize}
The definitions of stability of discrete-time standard systems~\eqref{eq3:sys} are essentially identical to the corresponding definitions of stability of continuous-time systems described by ordinary differential equations~\eqref{eq2:sys}, replacing $t \in [0,\infty)$ by $k \in \mathbb N$.
\end{definition}

In the following, we state a variety of stability results that do not require explicit knowledge of the solutions of $\dot{x}=Ax$ and can be stated for an arbitrary $A \in \R^{n,n}$. The notation $\Lambda(A)$ denotes the set containing the eigenvalues of $A$. 

\begin{theorem} \label{th:stability}
Let $A \in \R^{n,n}$. Then
\begin{itemize}
\item $A$ is continuous-time asymptotically stable 
if and only if $\Lambda(A)\subset \mathbb C_{-}:=\{\lambda \in \C:~\real{\lambda} < 0\} $ if and only if  there exists $X \succ 0$ such that
$A^\top X+XA \prec 0$.
\item $A$  is continuous-time stable if and only if $\Lambda(A) \subset \mathbb C_{-} \cup\, i\R$ and all eigenvalues on the imaginary axis are semisimple (non-defective) if and only if
there exists $X \succ 0$ such that
$A^\top X+XA \preceq 0$. 
\item $A$ is discrete-time asymptotically stable if and only if 
$|\lambda| < 1$ for all $\lambda \in \Lambda(A)$ if and only if 
there exists $X \succ 0$ such that
$A^\top XA-X \prec 0$.
\item $A$ is discrete-time stable if and only if 
$|\lambda| \leq 1$ for all $\lambda \in \Lambda(A)$ and the eigenvalues with unit modulus are semisimple if and only if 
there exists $X \succ 0$ such that
$A^\top XA-X \preceq 0$.
\end{itemize}

\end{theorem}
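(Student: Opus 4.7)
The plan is to establish each of the four equivalences by going through trajectory behavior of the associated dynamical system as the bridge between the spectral condition and the Lyapunov matrix inequality. Since the four statements are structurally parallel, I would carry out the continuous asymptotic case in detail and then describe how to adapt the argument to the other three.

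For part~1, I would first reduce the system $\dot x = Ax$ to Jordan normal form $A = PJP^{-1}$ so that $e^{tA} = P e^{tJ} P^{-1}$; a Jordan block of size $m$ with eigenvalue $\lambda$ contributes entries of the form $t^k e^{\lambda t}/k!$ with $0\le k\le m-1$. Hence every trajectory tends to $0$ if and only if $\real(\lambda)<0$ for all $\lambda\in\Lambda(A)$. This handles the spectral characterization. For the equivalence with the LMI, the easy direction is: if $X\succ 0$ satisfies $A^\top X + XA \prec 0$, then for any eigenpair $(\lambda,v)$ with $v\in\C^n$, writing $v^*A^\top = \bar\lambda v^*$ (using $A$ real) gives
\[
v^*(A^\top X+XA)v \;=\; 2\real(\lambda)\,v^*Xv,
\]
and positivity of $v^*Xv$ forces $\real(\lambda)<0$. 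For the converse, I would use the classical construction: pick any $Q\succ 0$ and set $X:=\int_0^\infty e^{tA^\top}\,Q\,e^{tA}\,dt$; the integral converges by the decay estimate just derived, $X\succ 0$ because $e^{tA}$ is invertible for every $t$, and a telescoping/fundamental-theorem computation yields $A^\top X + XA = [e^{tA^\top}Qe^{tA}]_0^\infty = -Q\prec 0$.

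The discrete asymptotic case~3 is the same proof with $e^{tA}$ replaced by $A^k$ (using $J^k$ entries $\binom{k}{j}\lambda^{k-j}$), the integral replaced by $X:=\sum_{k=0}^\infty (A^\top)^k\,Q\,A^k$, and the telescoping identity $A^\top X A - X = -Q$. For the non-strict (Lyapunov-stable) parts~2 and~4, the spectral $\Rightarrow$ LMI direction is obtained by decomposing $\R^n$ into the invariant subspace associated with the strictly stable spectrum and the invariant subspace associated with the imaginary (resp.\ unit-modulus) semisimple spectrum; on the first subspace apply the construction above, on the second pick an orthonormal-type basis in which the restriction of $A$ is skew-symmetric (resp.\ orthogonal) so that $X=I$ gives $A^\top X + XA = 0$ (resp.\ $A^\top X A - X = 0$), and glue the two blocks into a block-diagonal $X\succ 0$. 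The LMI $\Rightarrow$ spectral direction again starts from $v^*(A^\top X+XA)v = 2\real(\lambda)v^*Xv \le 0$ to get $\real(\lambda)\le 0$; the additional semisimplicity claim is obtained by assuming toward contradiction a Jordan chain $Av_1=i\omega v_1$, $Av_2=i\omega v_2+v_1$ and showing that $v_2^*(A^\top X+XA)v_2\le 0$ together with $X\succ 0$ forces $v_1 = 0$.

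The main obstacle, as indicated above, is not the asymptotic parts~1 and~3 (which are textbook once one has the Jordan form and the Lyapunov integral/sum) but the Lyapunov-stable parts~2 and~4: in both directions one must handle the boundary spectrum carefully, showing on one side that the LMI alone forces semisimplicity of eigenvalues on $i\R$ (or $|\lambda|=1$), and on the other side that the semisimplicity lets one build a positive definite $X$ that makes $A^\top X + XA$ only negative semidefinite instead of negative definite. I would expect the Jordan-chain contradiction to be the most delicate calculation, essentially because a size-$2$ block at a purely imaginary eigenvalue produces $t e^{i\omega t}$ terms whose unboundedness must be reflected in the quadratic form $v^*(A^\top X+XA)v$.
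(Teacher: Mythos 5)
The paper states Theorem~\ref{th:stability} without proof, as classical background (these are Lyapunov's theorems), so there is no in-paper argument to compare against; your proposal is the standard textbook route (Jordan form for the spectral characterizations, the Lyapunov integral $\int_0^\infty e^{tA^\top}Qe^{tA}\,dt$ and sum $\sum_k (A^\top)^kQA^k$ for the strict LMIs, eigenvector quadratic forms for the converses, and an invariant-subspace splitting for the semidefinite cases), and all of that is sound.

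The one genuine gap is exactly in the step you flag as most delicate. With $Av_1=i\omega v_1$, $Av_2=i\omega v_2+v_1$ and $M:=A^\top X+XA$, the quadratic form you propose to use evaluates to
\[
v_2^*Mv_2 \;=\; v_1^*Xv_2+v_2^*Xv_1 \;=\; 2\real\bigl(v_1^*Xv_2\bigr),
\]
and knowing this is $\le 0$ together with $X\succ 0$ does \emph{not} force $v_1=0$: one can easily have $v_1\neq 0$ with $\real(v_1^*Xv_2)\le 0$. The inference as written fails. The fix is to use the cross term instead: first note $v_1^*Mv_1=2\real(i\omega)\,v_1^*Xv_1=0$, and since $M\preceq 0$ this forces $Mv_1=0$; then compute $0=v_2^*Mv_1=(Av_2)^*Xv_1+v_2^*X(Av_1)=v_1^*Xv_1$, which with $X\succ 0$ gives $v_1=0$, the desired contradiction. (Alternatively, exploit that $(v_1,\,v_2+cv_1)$ is again a Jordan chain for every scalar $c$ and let $\real(c)\to+\infty$ in your inequality.) The same repair is needed in the discrete case, where $v_1^*(A^\top XA-X)v_1=(|\lambda|^2-1)v_1^*Xv_1=0$ gives $(A^\top XA-X)v_1=0$ and then $0=v_2^*(A^\top XA-X)v_1=\lambda\,v_1^*Xv_1$. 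Everything else in your outline, including the block-diagonal gluing of $X$ across the two invariant subspaces (carried out in the adapted coordinates, i.e.\ $X=P^{-\top}\diag(X_1,I)P^{-1}$), goes through.
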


\subsection{Descriptor systems}

Consider a continuous-time LTI descriptor system $(E,A,B,C,D)$ in the form~\eqref{eq2:sys}. 
Such a system is solvable if and only if there exists a unique solution for any given sufficiently differentiable control function $u(t)$ and any given admissible initial condition corresponding to an admissible $u(t)$~\cite{Yips81,Cam80}. It has been shown in~\cite{Cam80} that the system~\eqref{eq2:sys} is solvable if and only if the pencil $zE-A$ is regular, that is,
${\rm det}(\lambda E-A) \neq 0$ for some $\lambda \in \C$. 

Like the standard systems, while studying stability of descriptor systems~\eqref{eq2:sys}, we need only to consider the following homogeneous 
equation 
\begin{equation}\label{eq4:sys}
E\dot{x}(t) = Ax(t), 
\end{equation}
together with an initial condition
\begin{equation}\label{eq5:sys}
x(0) = x_0. 
\end{equation}
One can immediately extend
Definition~\ref{def_stan_stab} to regular systems~\eqref{eq4:sys}. However, one has to be careful with the initial conditions and inhomogeneities since they are restricted due to the algebraic constraints in the system. This is important, especially when one studies the stability under perturbations to the system, see, e.g., \cite[Example 1.1]{DuLM13} and~\cite[Example 1.2]{DuLM13} for possible difficulties in the stability concepts for descriptor system under small perturbations.

To characterize the stability for~\eqref{eq4:sys} under perturbations, let us introduce the following terminology~\cite{Gan59a,KunM06,Dua10}. A square matrix pair $(E,A)$ with $E,A \in \mathbb R^{n,n}$ is called \emph{regular} if the matrix pencil $z E-A$ is regular, that is, if
$\operatorname{det}(\lambda E-A)\neq 0$ for some $\lambda \in \mathbb C$, otherwise it is called \emph{singular}.
For a regular matrix pair $(E,A)$, the roots of the polynomial $\operatorname{det}(z E-A)$ are called \emph{finite eigenvalues} of the pencil
$zE-A$ or of the pair $(E,A)$, that is, $\lambda\in \mathbb C$ is a finite eigenvalue of the pencil
$zE-A$ if there exists a vector $x\in \mathbb C^n\setminus \{ 0\}$ such that
$(\lambda E-A)x=0$, and $x$ is called an \emph{eigenvector} of $zE-A$ corresponding to the eigenvalue $\lambda$.
A regular pencil $zE-A$ has \emph{$\infty$ as an eigenvalue} if $E$ is singular.

Any regular matrix pair $(E,A)$ (with $E,A\in \R^{n,n}$) can be transformed to \emph{Weierstra\ss\ canonical form} \cite{Gan59a}, that is, there exist nonsingular matrices $W, T \in \C^{n,n}$ such that
\[
E=W\mat{cc}I_q& 0\\0&N\rix T \quad \text{and}\quad A=W \mat{cc}J &0\\0&I_{n-q}\rix T,
\]
where $J \in \C^{q,q}$ is a matrix in \emph{Jordan canonical form} associated with the $q$ finite eigenvalues of
the pencil $z E-A$ and  $N \in \C^{n-q,n-q}$ is a nilpotent matrix in Jordan canonical form  corresponding
to  $n-q$ times the  eigenvalue $\infty$. If $q < n$ and $N$ has degree of nilpotency $\nu \in \{1,2,\ldots\}$, that is, 
$N^{\nu}=0$ and $N^i \neq 0$ for $i=1,\ldots,\nu-1$, then $\nu$ is called the \emph{index of the pair} $(E,A)$. If $E$
is nonsingular, then by convention the index is $\nu=0$.
A pencil $zE-A$ is of index at most one  if it is regular with exactly $r:=\text{rank}(E)$ finite eigenvalues, see, e.g.,  \cite{Meh91,Var95}. In this case the $n-r$ copies of the eigenvalue $\infty$ are semisimple.

The literature on (asymptotic) stability of constant coefficient DAEs is very ambiguous, see, e.g., \cite{BoyGFB94,ByeN93,Var95}, and the review in \cite{DuLM13}. This ambiguity arises from the fact that some authors consider only the finite eigenvalues in the stability analysis and allow the index of the pencil $(E,A)$ to be arbitrary, others consider regular high index pencils $zE-A$ as unstable by considering $\infty$ to be on the imaginary axis. We use the following definition.
%
\begin{definition}\label{prop1}
Consider the initial value problem~\eqref{eq4:sys}-\eqref{eq5:sys}.
\begin{itemize}
\item [i)] The linear DAE~\eqref{eq4:sys} is regular
if the matrix pair $(E,A)$ is regular~\cite{KunM06}.
\item [ii)] If the pair $(E,A)$
is regular and of index at most one, then the initial value problem~\eqref{eq4:sys} and~\eqref{eq5:sys} with a consistent initial value $x_0$ is \emph{stable} if all the finite eigenvalues of $zE-A$ are in the closed left half of the complex plane and those on the imaginary axis are semisimple~\cite{ByeN93,DuLM13}.
In this case $(E,A)$ is called a stable matrix pair.
\item [iii)] If the pair $(E,A)$ is regular and of index at most one, then the initial value problem~\eqref{eq4:sys} 
and~\eqref{eq5:sys} with a consistent initial value $x_0$ is \emph{asymptotically stable} if  all the finite eigenvalues of $zE-A$ are in the open left half of the complex plane~\cite{ByeN93,DuLM13}.
In this case $(E,A)$ is called an asymptotically stable matrix pair.
\end{itemize}
\end{definition}

The matrix pairs which are regular, of index at most one and asymptotically stable are also known as \emph{admissible} pairs~\cite{Dua10}. The admissibility of matrix pairs/LTI descriptor systems is also related to a type of generalized Lyapunov matrix equation.
\begin{theorem}[\cite{MasKAS97}] \label{thm:impulsefree_reference}
Consider a pair $(E,A)$ with $E,A \in \mathbb R^{n,n}$. The pair is regular, of index at most one and asymptotically stable
if and only if there exists a nonsingular $V\in \mathbb R^{n,n}$ satisfying
\begin{equation} \label{LMIcharact}
V^\top A+A^\top V \prec 0 \quad \text{ and } \quad E^\top V=V^\top E \succeq 0.
\end{equation}
\end{theorem}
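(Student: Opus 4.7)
My plan is to reduce both directions to the Weierstra\ss\ canonical form stated earlier in the excerpt and then invoke the classical Lyapunov theorem for the finite-eigenvalue block. The sufficiency direction ($\Leftarrow$) splits into three substeps (regularity, index at most one, asymptotic stability), while the necessity direction ($\Rightarrow$) is a straightforward construction.

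\textbf{For necessity ($\Rightarrow$).} Assume $(E,A)$ is regular, index at most one, and asymptotically stable. By Weierstra\ss\ (with $N=0$ since the index is at most one) there exist nonsingular real $W,T$ with $E = W\,\diag(I_r,0)\,T$ and $A = W\,\diag(J, I_{n-r})\,T$ where $\Lambda(J)\subset\mathbb{C}_-$. By Theorem~\ref{th:stability} there exists $P\succ 0$ such that $J^\top P + PJ\prec 0$. I take $\widehat V = \diag(P,\,-I_{n-r})$ and set $V = W^{-\top}\widehat V\, T$. A direct check using the identity $W^\top V = \widehat V\, T$ gives $E^\top V = T^\top\diag(P,0)\,T = V^\top E \succeq 0$ and $V^\top A + A^\top V = T^\top\diag(PJ+J^\top P,\,-2I_{n-r})\,T\prec 0$. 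Nonsingularity of $V$ follows from $\det(\widehat V)=(-1)^{n-r}\det(P)\neq 0$.

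\textbf{For sufficiency ($\Leftarrow$).} Suppose $V$ satisfies the two conditions. First I establish regularity: if $V^\top A x = 0$ then $x^\top(V^\top A + A^\top V)x = 2x^\top V^\top A x = 0$, forcing $x=0$ by $V^\top A + A^\top V \prec 0$. Hence $V^\top A$, and therefore $A$, is nonsingular, so $\det(\lambda E - A)\big|_{\lambda=0}\neq 0$ and $(E,A)$ is regular. Now apply Weierstra\ss: there exist nonsingular $W,T$ with $E = W\,\diag(I_q,N)\,T$ and $A = W\,\diag(J, I_{n-q})\,T$, where $N$ is nilpotent of nilpotency degree $\nu$. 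Setting $\widehat V = W^\top V T^{-1}$ and partitioning $\widehat V = \bigl[\,V_{11}\ V_{12};\ V_{21}\ V_{22}\,\bigr]$, the two conditions become $\widehat E^\top \widehat V = \widehat V^\top \widehat E \succeq 0$ and $\widehat V^\top \widehat A + \widehat A^\top \widehat V \prec 0$, since $T^\top(\cdot)T$ is a congruence.

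The crux (and the main obstacle) is showing $N=0$. The $(2,2)$-block of the first identity gives $N^\top V_{22} = V_{22}^\top N$, while the $(2,2)$-block of the second condition is $V_{22}+V_{22}^\top \prec 0$. If $N\neq 0$, then because $N$ is nilpotent there exist real vectors $u,v$ with $Nv = u \neq 0$ and $Nu = 0$. Then
\begin{equation*}
u^\top V_{22}\,u \;=\; (Nv)^\top V_{22}\,u \;=\; v^\top N^\top V_{22}\,u \;=\; v^\top V_{22}^\top N u \;=\; 0,
\end{equation*}
so $u^\top(V_{22}+V_{22}^\top)u = 0$ with $u\neq 0$, contradicting negative definiteness. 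Hence $N=0$ and the index is at most one. With $N=0$ in place, the off-diagonal block equations force $V_{12}=0$ and $V_{11}=V_{11}^\top \succeq 0$; nonsingularity of $\widehat V$ (hence of $V_{11}$) upgrades this to $V_{11}\succ 0$. The $(1,1)$-block of the strict condition is exactly $V_{11}J + J^\top V_{11}\prec 0$, and the Lyapunov criterion in Theorem~\ref{th:stability} yields $\Lambda(J)\subset\mathbb{C}_-$, giving asymptotic stability in the sense of Definition~\ref{prop1}(iii). The delicate point throughout is the index argument: the other steps are either standard Lyapunov theory or routine block-manipulations in the Weierstra\ss\ coordinates.
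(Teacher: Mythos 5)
Your proof is correct. Note first that the paper does not actually prove this statement: it is quoted as Theorem~\ref{thm:impulsefree_reference} with a citation to \cite{MasKAS97}, so there is no in-text argument to compare against. What you have written is a complete, self-contained proof along the standard lines: reduce to the Weierstra\ss\ canonical form, use the classical Lyapunov characterization from Theorem~\ref{th:stability} on the finite-eigenvalue block, and handle the index condition by a block computation. The construction $V = W^{-\top}\diag(P,-I_{n-r})T$ in the necessity direction checks out, the preliminary regularity argument (nonsingularity of $V^\top A$ from the strict inequality) is valid, and the key step --- deducing $N=0$ from $N^\top V_{22}=V_{22}^\top N$ and $V_{22}+V_{22}^\top\prec 0$ by testing against $u=N^{\nu-1}w$, $v=N^{\nu-2}w$ --- is clean and correct.

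One technicality worth making explicit: the Weierstra\ss\ form as stated in the paper has $W,T\in\C^{n,n}$, whereas your congruence arguments (e.g.\ passing from $T^\top M T\succ 0$ to $M\succ 0$, and the use of $u^\top$ with real $u,v$) require $W$ and $T$ to be real. For a real regular pencil one can always take the real (quasi-)Weierstra\ss\ form, with $J$ in real Jordan form and $N$ a real nilpotent matrix, and the finite spectrum unchanged; you implicitly assume this in the necessity direction and should invoke it in the sufficiency direction as well. With that remark added, the proof is complete.
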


\section{Passivity}\label{sec:defpass}

Another important property for a dynamical system is a conservation property that is termed as \emph{passivity}, which means system does not generate energy~\cite{AndV73,LozBEM13}. 

\begin{definition}\label{def:pass}
The LTI system $(E,A,B,C,D)$ is called \emph{passive} if there exists a nonnegative scalar valued function $\mathcal V(x)$ such that
$\mathcal V(0)=0$, and the dissipation inequality
\begin{equation}\label{eq:def_pass}
\mathcal V(x(t_1))-\mathcal V(x(t_0)) \leq \int_{t_0}^{t_1} y(t)^\top  u(t)dt
\end{equation}
holds for all admissible $u$, $t_0$, and $t_1 \geq t_0$.

The inequality~\eqref{eq:def_pass} has a natural interpretation as an assertion that the increase  in internal energy of the system, as measured by $\mathcal V$, cannot exceed the total work done on the system. The function $\mathcal V(x)$ is called a storage function associated with the supply rade, $y^\top (t)u(t)$. 

If for all $t_1 > t_0$, the inequality~\eqref{eq:def_pass}  is strict, then the
system is called \emph{strictly passive}. 
\end{definition}

In general for a system $(E,A,B,C,D)$, the lengths of vectors $u(t)$ and $y(t)$ are different. However, note that in~\eqref{eq:def_pass}, 
the energy is defined via the inner product of the input and output vectors, $u(t)$ and $y(t)$, of the system. Hence for passivity analysis,  these vectors need to be of the same length.  Passivity of an LTI dynamical system is equivalent to positive realness~\cite{AndV73}, which is defined next.

\section{Positive real systems}\label{sec:defprsys}

To define positive real (PR) systems, throughout this section we
assume that the system~\eqref{eq2:sys} is regular.
The system~\eqref{eq2:sys} can be described by its \emph{transfer function}
$G(s):\C\rightarrow (\C \cup \{\infty\})^{m,m}$, defined by
\begin{equation}\label{tran_fun}
G(s):=C(sE-A)^{-1}B+D,\quad s \in \C.
\end{equation}
Conversely, given a rational function $G(s):\C\rightarrow (\C \cup \{\infty\})^{m,m}$, any
representation of $G(s)$ in the form~\eqref{tran_fun} is called a realization  of
$G(s)$. A realization is called \emph{minimal} if the matrices $A$ and $E$ are of smallest
possible dimension.
In this case the poles  of the transfer function $G(s)$ are exactly the eigenvalues of the pencil $zE-A$.

Positive realness is a well-known concept in system, circuit and
control theory. In control theory, PR systems play a significant role
in stability analysis~\cite{Pop73,AndV73}, see also~\cite{Jos89} and the references therein for applications.
The PR systems have been defined in several different ways in the literature;
see \cite{AndV73,Wen88,LozJ90,SunKS94,IonT87,HadB91,HuaIMS99}
for  standard linear systems, \cite{WanC96,FreJ04,LozBEM13} for continuous-time descriptor systems, and \cite{ZhaLX02} for continuous- and discrete-time descriptor systems.
We follow~\cite{SunKS94} and define the positive realness in the frequency domain as follows.
\begin{definition}\label{def:PR_SPR_ESPR}
The system~\eqref{eq1:sys} is said to be
\begin{enumerate}
\item  \emph{positive real (PR)} if its transfer function $G(s)$ satisfies
\begin{enumerate}
\item $G(s)$ has no pole in $\real{s} >0$, and
\item $G(s)+G(s)^* \succeq 0$ for all $s$ such that $\real{s} > 0$.
\end{enumerate}
\item \emph{strictly positive real (SPR)} if its transfer function $G(s)$ satisfies
\begin{enumerate}
\item $G(s)$ has no pole in $\real{s} \geq0$, and
\item $G(jw)+G(jw)^* \succ 0$ for $w \in [0,\infty)$.
\end{enumerate}
\item \emph{extended strictly positive real (ESPR)} if it is SPR and
$G(j\infty)+G(j\infty)^* \succ 0$.
\end{enumerate}
\end{definition}

Note that the condition $(a)$ in the definition of SPR
is equivalent to the system being asymptotically stable.
An asymptotically stable system~\eqref{eq2:sys} with a minimal realization
is passive (resp.\ strictly passive) if and only if it is PR (resp.\ ESPR).
For more details, we refer to~\cite{AndV73} and \cite[pp.~174-175]{DesV75}. 
Furthermore, ESPR implies SPR, which further implies PR.

Note also that $G(s)=C(sE-A)^{-1}B+D$ is a rational function and has a power series expansion about $s=\infty$
of the form
\begin{equation}\label{eq:series_expan}
G(s)=C(sE-A)^{-1}B+D=\sum_{i=-p}^{\infty} \frac{H_i}{s^i},
\end{equation}
where $H_i$ are real matrices of size $m$. If $s=\infty$ is not a pole of $G(s)$ (that is, when $E$ is invertible),
then $p=0$ and $G(\infty)=D=H_0$. This implies that
for a standard system $(I_n,A,B,C,D)$ with $D+D^\top  \succ 0$, the notion of SPR and ESPR are the same, because
$G(j\infty)+G(j\infty)^* \succ 0$ if and only if $D+D^\top  \succ 0$.
 In the descriptor case (that is, when $E$ is not invertible), then the order of the pole at $s=\infty$
is larger than or equal to one (that is, $p\geq 1$ in~\eqref{eq:series_expan}).
In this case, $G(\infty)$ (if it exists) is not necessarily equal to $D$~\cite{gillis2018finding}.

\section{Nearness problems for LTI systems}

In real-world applications, one uses mathematical models to simulate, control or optimize a system or process. This mathematical model is infinite dimensional (e.g., the determination of the electric or magnetic field associated with an electronic device) and is approximated by finite element or finite difference model~\cite{IdaB97}, or the model is non-linear, and a linearization is used to obtain a linear model. The model may also be obtained by a realization or system identification~\cite{CoePS99,GusS01}, or it may result from a model order reduction procedure~\cite{Ant05}. These mathematical models, therefore, are typically inexact and contain uncertainties. Thus it is vital to study the following question~\cite{DuLM13}: 
\begin{center}
{\it How robust is a property of a dynamical system 
under perturbations of the coefficient matrices?}
\end{center}
The property of a system is called \emph{robust} if it is preserved under arbitrary, but sufficiently small, perturbations to the system. This gives a motivation to define nearness problems (distance problems) for a system. 

The nearness problem for LTI systems consists of finding, for a given system, the nearest system (with respect to some prescribed norm) within a given class of systems. The distance between the nominal system and the closest system from the given class is typically called the \emph{radius} of the system property 
(e.g., stability, instability, passivity, non-passivity, controllability, observability).
For better understanding, we divide these nearness problems into two types.

\subsection{Type-I distances (from good to bad systems)}

Consider an LTI system
 $(E,A,B,C,D)$ with a property $\mathcal P$. The Type-I distance  looks the smallest perturbation in the system matrices $(E,A,B,C,D)$ such that the perturbed system does not possess the property $\mathcal P$. More precisely, for  a given system $\Sigma=(E,A,B,C,D)$ compute the distance (radius of $\mathcal P$)
 \begin{align}\label{def_nearness_eq1}
 r_{\not \mathcal P}(\Sigma) \; = \; \inf \Big\{
 { \|\Delta_\Sigma\|}: & \;\; \Delta_\Sigma=(\Delta_E,\Delta_A,\Delta_B,\Delta_C,\Delta_D)~\text{such\, that} \nonumber \\
  & \;\; \Sigma+\Delta_\Sigma \, \text{does\, not\, posses \,property}\, \mathcal P
 \qquad  \Big\},
\end{align}  
where $\|\cdot\| $ is some norm defined on the set of matrix quintuple $(E,A,B,C,D)$. This is useful in the robustness analysis of control systems. If the radius is small, then the original problem is more likely to be ill-conditioned or more sensitive to pertubations, and some remedial actions need to be taken~\cite{Hig88a}.

Such nearness problems for systems have been a topic of research in the numerical linear algebra community. For example, the distance to instability (stability radius problem), where a stable system is given and one looks for the smallest perturbation that makes the system unstable; see~\cite{Bye88,HinP86} for standard systems, and~\cite{ByeN93,DuLM13} for descriptor systems. 
Similarly, the distance to non-passivity (passivity radius problem) is the smallest perturbation that makes a passive system non-passive. The passivity radius for complex standard systems was computed in~\cite{OveV05}. This problem is closely related to the Hamiltonian matrix nearness problem~\cite{Tal04,SchT07,WanZKPW10,VoiB11}. 

 Some other related distance problems of Type-I are, e.g., matrix nearness problems~\cite{Hig88a}, the structured singular value
problem~\cite{PacD93}, the robust stability problem~\cite{Zho11}, the distance to bounded realness for Hamiltonian matrices~\cite{AlaBKMM11}, the nearest defective matrix~\cite{Wil84}, the distance to singularity~\cite{ByeHM98,GugLM16}, and the distance to controllability~\cite{Eisr84}.

\subsection{Type-II distances (from bad to good systems)}\label{sec:bad2good}

Such distances are complementary to Type-I distances.  
 Consider the system $(E,A,B,C,D)$ and a given system property $\mathcal P$. The Type-I distance  looks for the smallest perturbation in the system matrices $(E,A,B,C,D)$ such that the perturbed system has property $\mathcal P$. More precisely, for  a given system $\Sigma=(E,A,B,C,D)$, compute the distance
 \begin{align}\label{def_nearness_eq2}
 r_{ \mathcal P}(\Sigma)  \; =  \;  \inf\Big\{
 { \|\Delta_\Sigma\|}  : &  \; \; \Delta_\Sigma=(\Delta_E,\Delta_A,\Delta_B,\Delta_C,\Delta_D)~ \text{such\, that}\nonumber\\
  &  \; \; \Sigma+\Delta_\Sigma \, \text{has \,property}\, \mathcal P
 \qquad \Big\},
\end{align}  
where $\|\cdot\| $ is some norm defined on the set of matrix quintuple $(E,A,B,C,D)$.  

This kind of problem occurs in system identification, where one needs to identify a system with property $\mathcal P$ from observations. Measurements being subject to several perturbations, truncation and noise, it may happen that the identified system does not possess property $\mathcal P$. One then approximates the system by a nearby system with property $\mathcal P$ by introducing small perturbations to system matrices $(E,A,B,C,D)$. This provides a tool that can correct measurement errors arising from the identification step by intrinsically modifying the identified system~\cite{ONV13,AlaBKMM11}; see also Section~\ref{sec:appldiscretesys}. 

Typically, one has an estimate or even a bound for the error (approximation, truncation and noise) in the original measurements. Then  one tries to keep the perturbations in $(E,A,B,C,D)$ within those bounds. So from the application point of view it may not be necessary to determine the minimal perturbation that makes the system achieve property $\mathcal P$; a perturbation that stays within the range of the already committed measurements errors is sufficient. But from the system theoretical point of view it is interesting to find a value or a bound for the smallest perturbation $( r_{ \mathcal P}(\Sigma))$ that makes the system achieves property $\mathcal P$. In general, determining the minimal perturbation for system properties, such as stability or passivity, is very challenging. Instead, one uses (non-convex) optimization methods to estimate $ r_{ \mathcal P}(\Sigma)$, see, e.g., \cite{ONV13,GilS16,guglielmi2017matrix,noferini2020nearest} for distance to stability (when $\mathcal P= {\rm stability}$) and~\cite{FreJ07,Tal04,SchT07,AlaBKMM11,gillis2018finding} for distance to passivity (when $\mathcal P= {\rm passivity}$). Another closely related problem is that of finding the closest stable polynomial to a given unstable one~\cite{MosL91}.

As mentioned in~\cite{Hig88a} for matrices, the choice of norm in~\eqref{def_nearness_eq1} and~\eqref{def_nearness_eq2} is usually guided by the tractability of the nearness problem. The two most useful norms are the squared $2$-norm, $\sum_{i} {\|\Delta_i\|}_2^2$, and the squared Frobenius norm, $\sum_{i} {\|\Delta_i\|}_F^2$, on the set of matrix quintuples 
$(\Delta_E,\Delta_A,\Delta_B,\Delta_C,\Delta_D)$. Both norms are unitarily invariant and differentiable. 

In these notes, we mostly focus on the Frobenius norm to measure distances, since (i)~it is arguably one of the most popular norms used to measure distances,
and (ii)~it is strictly convex and a smooth function of the matrix entries hence makes the optimization problem easier to tackle. However, the algorithms  in the following chapters can easily be extended to any other smooth objective function, e.g., any (weighted) $\ell_p$ norm with $1 < p < +\infty$ (only the computation of the gradient of the corresponding objective function will change).


%

\section{Organization of the lecture notes}  

In these lectures notes, we review our recent works addressing the problem of computing various type-II distances. 

In Chapter~\ref{chap:prelmin}, we provide some preliminary background, namely, 
defining Port-Hamiltonian systems (Section~\ref{sec_porthamsys}) and  
dissipative Hamiltonian systems (Section~\ref{diss_hamilt_sys}) and their properties, 
briefly discussing matrix factorizations (Section~\ref{sec:matfac}), and 
describing the optimization methods  that we will use in these notes (Section~\ref{sec:optimization}). 

In Chapter~\ref{chap:contsys}, we present our approach to tackle the distance to stability for standard continuous LTI systems. The main idea is to rely on the characterization of stable systems as  dissipative Hamiltonian systems. 
We show how this idea can be generalized to compute the nearest $\Omega$-stable matrix, where the eigenvalues of the sought system matrix $A$ are required to belong a rather general set $\Omega$ (Section~\ref{sec:omegastab}). 
In Section~\ref{sec:minnomfeed}, 
we show how these ideas can be used to compute minimal-norm static feedbacks, that is, stabilize a system by choosing a proper input $u(t)$ that linearly depends on $x(t)$ (static-state feedback), or on $y(t)$ (static-output feedback).   

In Chapter~\ref{chap:posrealsys}, we present our approach to tackle the distance to passivity.  The main idea is to rely on the characterization of stable systems as  port-Hamiltonian systems. We also discuss in more details the special case of computing the nearest stable matrix pairs in Section~\ref{sec:nearestdessys}. 
 
In Chapter~\ref{chap:discrete}, we focus on discrete-time LTI systems. Similarly as for the continuous case, we propose a  parametrization that allows efficiently compute the nearest stable system (for a matrix in Section~\ref{sec:nsm}, or for a matrix pair in Section~\ref{sec:nsmp}), allowing to compute the distance to stability. 
In Section~\ref{sec:appldiscretesys}, we show how this idea can be used in data-driven system identification, that is, given a set of input-output pairs, identify the system $A$.


\chapter{Preliminaries}   \label{chap:prelmin}

In this chapter, we briefly recall important concepts that will be used throughout these notes.

\section{Port-Hamiltonian systems } \label{sec_porthamsys}
 
\emph{Port-Hamiltonian (PH)} systems generalize the classical Hamiltonian systems and
recently have received a lot attention in energy based modeling;
see~\cite{Sch06,Sch13,SchM95,SchM02,SchM13,Sch2014port,BeaMXZ17_ppt,GolSBM03} 
for some major references. Although PH systems may be formulated in a general framework, we will restrict ourselves to LTI input-state-output PH systems, which have the form
\begin{align}
\label{eq:phsystem}
\begin{split}
E\dot{x}(t) = & \; (J-R)Qx(t) + (F-P)u(t), \\
y(t) = & \; (F+P)^{\top} Qx(t) + (S+N)u(t),
\end{split}
\end{align}
where the following conditions must be satisfied:
\begin{itemize}

\item The matrix $Q \in \R^{n,n}$ is invertible, $E \in \R^{n,n}$, and  $Q^{\top}E=E^{\top}Q \succeq 0$. The function $x\rightarrow \frac{1}{2}x^{\top} Q^{\top}E x$ is the \emph{Hamiltonian} and
describes the energy of the system.
\item The matrix $J^{\top}=-J \in \R^{n,n}$
is the structure matrix that describes flux among energy storage elements.
\item The matrix $R \in \R^{n,n}$
with $R\succeq 0$ is a positive semidefinite (PSD) matrix, and is the dissipation matrix that describes the energy dissipation/loss in the system.
\item The matrices $F\pm P \in \R^{n,m}$ are the port matrices describing the manner in which energy enters and exits
the system.
\item The matrix $S+N$, with $0 \preceq S \in \R^{m,m}$ and $N^{\top}=-N \in \R^{m,m}$,  describes
the direct feed-through from input to output.
\item The matrices $R$, $P$ and $S$ satisfy
\[
K=\mat{cc}R &P\\P^{\top} &S \rix \succeq 0.
\]
\end{itemize}

We will refer to $K$ as the \emph{cost matrix} of the PH system because
it corresponds to the cost matrix of an
infinite horizon linear-quadratic optimal control problem. 
We note that this definition of
PH systems is slightly more restrictive than that of PH systems in \cite{BeaMXZ17_ppt},
where it is not required for the matrix $Q$ to be invertible. 

PH systems form an important modelling tool in almost all areas of system and control, in particular, in network-based modelling of multi-physics multi-scale systems. They result in robust systems that can be easily interconnected.

\begin{example}{\rm
Finite element modelling of the acoustic field in the interior of a car, see, e.g., \cite{MehS11,BeaMXZ17_ppt}, leads to (after several simplifications) a large-scale constant-coefficient differential-algebraic equation system of the form
\begin{equation*}
M \ddot{p} +D \dot{p}+Kp =B_1 u,
\end{equation*}
where $p$ is the coefficient vector associated with the pressure in the air and the displacements of the structure, $B_1 u$ is an external force, $M \succeq 0$ is  a mass matrix, $D\succeq 0$ is a damping matrix, and $K \succ 0$ is a stiffness matrix. Here, 
$M$ is only semidefinite since small masses were set to zero. The first-order approximation leads to a PH system 
\[
E\dot{x}=(J-R)Qx+Bu,\quad y=B^{\top}Qx,
\]
 where
\begin{eqnarray*}
&E=\mat{cc}M & 0\\0& I\rix,~J=\mat{cc}0 & -I \\ I & 0\rix,
~R=\mat{cc}D& 0\\0&0\rix,~x=\mat{c}\dot{p}\\p\rix, \\
&Q=\mat{cc}I & 0\\0 &  K\rix,~B=\mat{c}B_1 \\ 0\rix,~ P=0,~S+N=0.
\end{eqnarray*}
The Hamiltonian in this case is given by $\mathcal H (x)=\frac{1}{2}\mat{c}x_1\\x_2\rix \mat{cc}M & 0\\0& K \rix \mat{c}x_1 \\x_2\rix$.
}
\end{example}

\subsection{Properties of PH systems}

One of the major advantages of PH modelling is that system properties are encoded algebraically~\cite{BeaMXZ17_ppt,MehMW18}, and they are robust under structured perturbations~\cite{MehMS16,MehMS16b,MehMW18,MehV20,MehMW21}.
The algebraic structure of PH systems guarantees that the system is automatically stable;~see~\cite{MehMS16} for
standard PH systems and~\cite{gillis2018computing,MehMW18} for descriptor PH systems. We review some of the stability results of PH systems~\eqref{eq:phsystem} in Section~\ref{subsec:dhprop}.

Another important property of the PH sytems~\eqref{eq:phsystem}  is that they are always passive. Indeed, the Hamiltonian $\mathcal H(x)=\frac{1}{2}x^{\top}Q^{\top}Ex$ defines a storage function with the supply rate $u^{\top}y$. Clearly, $\mathcal H(x)$ is nonnegative since $Q^{\top}E \succeq 0$, and
\begin{eqnarray*}\label{eq1:dissineq}
\frac{d\mathcal H}{dt} = u^{\top}y-\mat{c}x\\u\rix^{\top}
\mat{cc}Q^{\top}RQ & Q^{\top}P\\ P^{\top}Q & S\rix \mat{c}x\\u\rix \leq u^{\top}y,
\end{eqnarray*} 
since $K=\mat{cc}R&P\\P^{\top} &S \rix \succeq 0$ and $Q \succ 0$. This implies that for any $t_0,t_1 \in \R$, with $t_1 > t_0$, the dissipation inequality
\begin{equation} \label{HHyu}
\mathcal H(x(t_1))-\mathcal H(x(t_0)) \leq \int_{t_0}^{t_1} y(t)^{\top} u(t)dt
\end{equation}
holds. Thus by Definition~\ref{def:pass}, the system~\eqref{eq:phsystem} is passive. Moreover, if $K \succ 0$, then  \eqref{eq:phsystem} is strictly passive. We note that the inequality~\eqref{HHyu} holds even when the system matrices in~\eqref{eq:phsystem} depend on $x$ or explicitly on time $t$, see~\cite{MasSB92}, or when they are defined as linear operators acting on infinite-dimensional spaces~\cite{JacZ12}.

The PH system has many other properties; it is robust under structured  perturbations~\cite{MehMS16,MehMS16b,MehV20}. PH systems constitute a class of systems that is closed under power-conserving interconnections. This means the port connected PH systems produce an aggregate system that must also be PH. This aggregate system hence will be guaranteed to be both stable and passive~\cite{Kle13}. The model reduction of PH systems via Galerkin projection yields (smaller) PH systems~\cite{PolS10,GugPBS12}.

The PH modelling of dynamical systems is thus compelling; it encodes underlying physical principles as conservative laws directly into the structure of the system model. It is shown in~\cite{BeaMX15_ppt} that  every minimal passive system is equivalent to a PH system. It has also been recently shown to how robustly transform a passive and stable system into a robust PH system~\cite{MehV20,CheMH19}.

\section{Dissipative Hamiltonian systems} \label{diss_hamilt_sys}

 A \emph{dissipative Hamiltonian (DH)} descriptor system in the LTI case can be expressed as
\begin{equation}\label{eq:dhdescriptor}
E\dot{x}=(J-R)Qx,
\end{equation}
where $J=-J^{\top} \in \mathbb R^{n,n}$, $R \in \mathbb R^{n,n}$ with $R=R^{\top} \succeq 0$, and $E,Q \in \mathbb R^{n,n}$ such that $Q$ is invertible and satisfies  $E^{\top}Q=Q^{\top}E \succeq 0$. 
These systems arise in energy-based modeling of dynamical systems; see, e.g.,~\cite{BeaMXZ17_ppt,GolSBM03,Sch13,MehMW18} and are a special case of port-Hamiltonian descriptor systems  that were presented in Section~\ref{sec_porthamsys}. 

DH systems play an important role in a variety of applications~\cite{BeaMXZ17_ppt,MehMW18}. We briefly discuss the following examples from the literature.

\begin{example}{\rm
A simple RLC network (see, e.g.,~\cite{BeaMXZ17_ppt,FreJ11}) can be modelled by a DH descriptor system of the form
\begin{equation}\label{eq:examdh1}
\underbrace{\mat{ccc}G_c\mathcal CG_c^{\top} & 0&0 \\ 0 & \mathcal L & 0 \\ 0&0&0\rix}_{:=E}
\mat{c}\dot{v}\\ \dot{i}_l(t) \\  \dot{i}_v(t) \rix =
\underbrace{\mat{ccc} -G_r\mathcal R^{-1}G_r^{\top} & -G_l & -G_v \\ G_l^{\top} & 0 & 0\\ G_v^{\top} & 0&0 \rix}_{:=J-R}
\mat{c}{v}\\ {i}_l(t) \\  {i}_v(t) \rix, 
\end{equation}
with real symmetric matrices $\mathcal R \succ 0$, $\mathcal C \succ 0$,
$\mathcal L \succ 0$ incorporating the resistances of the resistors, capacitances of the capacitors, and inductances between the inductors, respectively. Here, $J$ and $-R$ are defined as the skew-symmetric and symmetric parts, respectively, of the matrix on the right-hand side of~\eqref{eq:examdh1}. 
The matrix $G_v$ is of full rank, and the subscripts $r,c,l,v$, and $i$ refer to edge quantities corresponding to the resistors, capacitors, inductors, voltage sources, 
and current sources, respectively, of the given $RLC$ network. We see that in this example, we have a form~\eqref{eq:dhdescriptor} system with the matrix $Q$ being the identity, $E=E^{\top}\succeq 0$, $J^{\top}=-J$, and $R\succeq 0$.
}
\end{example}

\begin{example}{\rm
Space discretization of the Stokes or Oseen equation in fluid dynamics~(see, e.g.,~\cite{EmmV13}) leads to a DH system $E \dot{x}=(J-R)Qx$, with
\[
E=\mat{cc}M & 0\\0 & 0\rix, \quad J=\mat{cc}0 & B \\ -B^{\top} & 0\rix,\quad
R=\mat{cc}A & 0 \\ 0 & 0\rix, \quad Q=I,
\]
where $A$ is a PSD discretization of the negative Laplace operator, $B$ is a discretized gradient, and $M$ is  a positive definite mass matrix. The matrix $B$ may be rank deficient, in which case the system is singular. 
}
\end{example}

Examples with singular $Q$ also arise in applications. The singular DH systems occur as a limiting case or when redundant system modelling is used, see~\cite{AstOV10,FujSS12}.

\subsection{Properties of DH systems}\label{subsec:dhprop}

To understand the algebraic properties of DH systems~\eqref{eq:dhdescriptor}, we analyze the matrix pairs $(E,(J-R)Q)$, 
where $J^{\top}=-J$, $R\succeq 0$, and $Q$ is invertible with $E^{\top}Q=Q^{\top}E \succeq 0$. We refer to such matrix pairs as \emph{DH matrix pairs}. 

The assumption that $Q$ is invertible and $Q^{\top}E$ is PSD results in many linear algebra properties of the DH systems. We note that a more general class of DH systems with singular $Q$ is investigated in~\cite{MehMW18}. 
 In the following, we review only those properties of DH systems from~\cite{MehMW18,gillis2018computing}, which will help solve distance problems related to the stability of LTI systems. 

A DH matrix pair is not necessarily regular, as the following example shows.
\begin{example}\label{ex:ex2}{\rm
The pair $(E,(J-R)Q)$ with
\[
E=\begin{bmatrix}1 & 0&0\\0& 1&0\\0&0&0\end{bmatrix}, \;
J=\begin{bmatrix}0 & 2&0\\-2&0&0\\0&0&0\end{bmatrix}, \;
R=\begin{bmatrix} 1&0&0\\0&1&0\\0&0&0\end{bmatrix}, \;
Q=\begin{bmatrix}1 &0&0\\0 & 1&0\\0&0&1\end{bmatrix},
\]
is a DH matrix pair, but $(E,(J-R)Q)$ is singular, since
$\operatorname{det}(zE-(J-R)Q)\equiv 0$.
}
\end{example}
It is easy to see that
if the matrices $E$ and $A$ have a common nullspace,
then the pair $(E,A)$ is singular, but the converse is in general not  true, see, e.g., \cite{ByeHM98}.
However, for singular DH matrix pairs, the property that $Q$ is invertible guarantees that the converse also holds, see~\cite{MehMW21,PraS21a} for more on the singularity of DH systems.
\begin{lemma} \label{Lemma1}
Let $(E,(J-R)Q)$ be a DH matrix pair. Then $(E,(J-R)Q)$ is singular if and only if
\[
\operatorname{null}(E)\cap\operatorname{null}((J-R)Q) \neq \emptyset.
\]
\end{lemma}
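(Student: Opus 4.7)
My plan is to prove the two directions separately. The easy direction ($\Leftarrow$) is immediate: if $0\neq v\in\operatorname{null}(E)\cap\operatorname{null}((J-R)Q)$, then $(zE-(J-R)Q)v=0$ for every $z\in\mathbb C$, so $\det(zE-(J-R)Q)\equiv 0$ and the pair is singular.

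For the hard direction ($\Rightarrow$), I would first normalize. Multiplying the pencil on the left by the invertible $Q^\top$ gives $zQ^\top E-Q^\top(J-R)Q = z\tilde E-(\tilde J-\tilde R)$, where $\tilde E=Q^\top E$ is symmetric PSD (by the DH assumption $Q^\top E=E^\top Q\succeq 0$), $\tilde J=Q^\top JQ$ is skew-symmetric, and $\tilde R=Q^\top RQ$ is symmetric PSD. Since $Q^\top$ is invertible, singularity of the pencil is preserved, and one easily checks that the common nullspace is preserved too: $\operatorname{null}(\tilde E)=\operatorname{null}(E)$ and $\operatorname{null}(\tilde J-\tilde R)=\operatorname{null}((J-R)Q)$. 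So I may assume $Q=I$, $E$ symmetric PSD, $A:=J-R$ with $J$ skew and $R\succeq 0$.

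The main structural fact I would establish is that $\operatorname{null}(A)=\operatorname{null}(A^\top)$ under the DH structure: if $Av=0$, then $v^\top Av = v^\top Jv - v^\top Rv = -v^\top Rv$, so $v^\top Rv=0$, which forces $Rv=0$ (since $R\succeq 0$) and hence $Jv=Av+Rv=0$; then $A^\top v=-Jv-Rv=0$. Symmetrically for the other inclusion.

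Next, from singularity of the pencil, take a nonzero polynomial null vector $p(z)=p_0+p_1z+\cdots+p_kz^k$ with $p_k\neq 0$ and $(zE-A)p(z)\equiv 0$. Expanding in powers of $z$ gives the chain
\begin{equation*}
Ap_0=0,\qquad Ep_{i-1}=Ap_i\ \ (1\le i\le k),\qquad Ep_k=0.
\end{equation*}
The key step is to propagate the nullity. Since $Ap_0=0$, by the DH fact we have $A^\top p_0=0$, so $p_0^\top Ap_1=0$. Using $Ap_1=Ep_0$, this yields $p_0^\top Ep_0=0$, and because $E\succeq 0$ this forces $Ep_0=0$, hence $Ap_1=Ep_0=0$. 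Iterating, if $Ap_i=0$ then $A^\top p_i=0$, so $p_i^\top Ap_{i+1}=p_i^\top Ep_i=0$, giving $Ep_i=0$ and $Ap_{i+1}=0$. By induction $Ap_k=0$ and, together with $Ep_k=0$ from the end of the chain, we obtain $0\neq p_k\in\operatorname{null}(E)\cap\operatorname{null}(A)$.

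The main obstacle is the propagation step: it is tempting to try to conclude directly from $\operatorname{null}(E)\cap\operatorname{null}(R)\neq\{0\}$ (which follows easily from the symmetric part $zE+R$ of the pencil being singular for all $z>0$), but a vector there need not be annihilated by $J$. The polynomial chain combined with $\operatorname{null}(A)=\operatorname{null}(A^\top)$ is what actually transfers nullity simultaneously through $E$ and $A$.
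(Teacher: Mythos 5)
Your proof is correct, but it takes a genuinely different route from the paper's. The paper's argument is much shorter and avoids polynomial null vectors entirely: since $\det(zE-(J-R)Q)\equiv 0$, one may fix a single $\lambda$ with $\real(\lambda)>0$ and a nonzero $x$ with $(J-R)Qx=\lambda Ex$; multiplying on the left by $x^HQ^{\top}$ and taking real parts gives $-x^HQ^{\top}RQx=\real(\lambda)\,x^HQ^{\top}Ex$, and since both quadratic forms are nonnegative while $\real(\lambda)>0$, this forces $x^HQ^{\top}Ex=0$, hence $Q^{\top}Ex=0$, then $Ex=0$ by invertibility of $Q$, and finally $(J-R)Qx=\lambda Ex=0$. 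You instead normalize by $Q^{\top}$, establish the auxiliary fact $\operatorname{null}(J-R)=\operatorname{null}((J-R)^{\top})$ for the DH structure, and propagate nullity along the coefficient chain of a polynomial null vector. Both are valid: the paper's proof is more economical (one well-chosen eigenvalue, one Rayleigh-quotient estimate), whereas yours is closer in spirit to the Kronecker structure of singular pencils and in fact yields slightly more, namely that \emph{every} coefficient $p_i$ of the null polynomial lies in $\operatorname{null}(E)\cap\operatorname{null}(J-R)$. One small point worth making explicit in your write-up: since the pencil is real, the polynomial null vector can be chosen with real coefficients, which is what your use of plain transposes (rather than conjugate transposes) in the propagation step implicitly assumes; alternatively, replace $p_i^{\top}$ by $p_i^{H}$ throughout, which works equally well since $p_i^{H}Jp_i$ is purely imaginary.
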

\begin{proof} The direction $(\Leftarrow)$  is immediate.  For the other direction, let $(E,(J-R)Q)$ be singular, that is, $\operatorname{det}(z E-(J-R)Q) \equiv 0$. Let $\lambda \in \mathbb C$ be such that $\real{(\lambda)} > 0$ and let  $x \in \mathbb C^n\setminus \{0\}$ be such that
\begin{equation}\label{eq:singular_DHpair_1}
(J-R)Qx=\lambda Ex.
\end{equation}
Since $Q$ is nonsingular, we have that $Qx\neq 0$, and we can multiply with $(Qx)^H$ from the left to obtain
\begin{equation}\label{eq:singular_DHpair_2}
x^HQ^{\top}JQx-x^HQ^{\top}RQx=\lambda x^HQ^{\top}Ex,
\end{equation}
where $x^H$ denotes the complex conjugate of  a vector $x$. This implies that $x^HQ^{\top}Ex=0$, because otherwise from~\eqref{eq:singular_DHpair_2} we would have
\[
\real{(\lambda)}=-\frac{x^HQ^{\top}RQx}{x^*Q^{\top}Ex} \leq 0,
\]
since  $Q^{\top}RQ \succeq 0$ (as $R \succeq 0$), and $Q^{\top}E \succeq 0$. But this is a contradiction to the fact that
$\real{(\lambda)} > 0$. Therefore $x^HQ^{\top}Ex=0$ and also $Q^{\top}Ex=0$, and this implies that
$Ex =0$ as $Q$ is invertible. Inserting this in~\eqref{eq:singular_DHpair_1}, we get
$(J-R)Qx=0$, i. e. $0\neq x\in \operatorname{null}(E)\cap\operatorname{null}((J-R)Q)$.
\end{proof}
Lemma~\ref{Lemma1} gives a necessary and sufficient condition for a DH matrix pair to
be singular. However, if the dissipation matrix $R$ is positive definite, regularity is assured, as shown in the following result.
\begin{corollary}\label{cor:DJ_regular}
Let $(E,(J-R)Q)$ be a DH matrix pair. If  $R$ is positive definite, then the pair is regular.
\end{corollary}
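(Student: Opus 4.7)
My plan is to argue by contradiction, leveraging Lemma~\ref{Lemma1}, which has just been established. Assume that $(E,(J-R)Q)$ is singular. By Lemma~\ref{Lemma1}, there exists a nonzero vector $x \in \mathbb{C}^n$ such that $Ex = 0$ and $(J-R)Qx = 0$ simultaneously. The goal is to derive $x = 0$, contradicting the assumption.

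To do this, I would multiply the identity $(J-R)Qx = 0$ on the left by $(Qx)^H$, obtaining
\[
(Qx)^H J (Qx) \; - \; (Qx)^H R (Qx) \; = \; 0.
\]
Since $J^\top = -J$, the quadratic form $(Qx)^H J (Qx)$ is purely imaginary, while $(Qx)^H R (Qx)$ is real (because $R = R^\top$). Taking real parts therefore forces $(Qx)^H R (Qx) = 0$. Now the crucial use of the hypothesis: because $R \succ 0$, this vanishing of the quadratic form implies $Qx = 0$. Finally, invertibility of $Q$ gives $x = 0$, which contradicts $x \neq 0$.

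I do not anticipate any real obstacle here, since all the heavy lifting was done in Lemma~\ref{Lemma1}, and the positive definiteness of $R$ (as opposed to mere positive semidefiniteness, which was sufficient only to characterize singularity) is precisely what is needed to upgrade the conclusion $(Qx)^H R (Qx) = 0$ from ``$R^{1/2}Qx = 0$'' to ``$Qx = 0$.'' The only minor care needed is to note that skew-symmetry of $J$ gives a purely imaginary form on complex vectors, which is why taking real parts is the right operation; this is the same trick used in the proof of Lemma~\ref{Lemma1}.
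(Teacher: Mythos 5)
Your proof is correct and takes essentially the same route as the paper: both invoke Lemma~\ref{Lemma1} and then show via the quadratic form $(Qx)^H(J-R)(Qx)=0$, skew-symmetry of $J$, positive definiteness of $R$, and invertibility of $Q$ that no nonzero $x$ can lie in $\operatorname{null}((J-R)Q)$. The only cosmetic difference is that the paper phrases this as ``$(J-R)Q$ is invertible when $R\succ 0$'' rather than deriving the contradiction directly from the common-nullspace vector.
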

\begin{proof} By Lemma~\ref{Lemma1}, a necessary condition for the pencil $\lambda E-(J-R)Q$
to be singular is that neither $E$ nor $(J-R)Q$ is invertible. Thus the result follows immediately
by the fact that if $R \succ 0$ in a DH matrix pair $(E,(J-R)Q)$, then $(J-R)Q$ is invertible. Indeed,
suppose there exists $x\in \mathbb C\setminus \{0\}$ such that $(J-R)Qx=0$, then we have
$x^HQ^{\top}(J-R)Qx=0$. This implies $x^HQ^{\top}RQx=0$, since $J^{\top}=-J$. 
Since  $Q$ is invertible and thus $Qx\neq 0$, this  is a contradiction to the assumption that $R$ is positive definite.
\end{proof}

In the following lemma, which is the matrix pair analogue of~\cite[Lemma 2]{GilS16},  we localize the finite eigenvalues of a DH matrix pair.
\begin{lemma}\label{lem:DH_pair_prop}
Let $(E,(J-R)Q)$ be a regular DH matrix pair and let $L(z):=zE-(J-R)Q$. Then the following statements hold.
\begin{enumerate}
\item [1)] All finite eigenvalues of the pencil $L(z)$ are in the closed left half of the complex plane.
\item [2)] The pencil $L(z)$ has a  finite eigenvalue $\lambda$ on the imaginary axis if and only if $RQx=0$ for some eigenvector
$x$ of the pencil $zE-JQ$ associated with  $\lambda$.
\end{enumerate}
\end{lemma}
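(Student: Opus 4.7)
The plan is to reduce both parts to a single identity: left-multiplying the eigenvalue equation $(J-R)Qx = \lambda Ex$ (for $x\neq 0$) by $(Qx)^H$ gives
\[
x^H Q^\top J Q x \;-\; x^H Q^\top R Q x \;=\; \lambda\, x^H Q^\top E x.
\]
The structural hypotheses make each piece easy to control: $Q^\top J Q$ is skew-symmetric so $x^H Q^\top J Q x \in i\mathbb{R}$; $Q^\top R Q \succeq 0$ since $R \succeq 0$; and $Q^\top E \succeq 0$ by the DH hypothesis. Taking real parts then reads
\[
-\,x^H Q^\top R Q x \;=\; \real(\lambda)\, x^H Q^\top E x.
\]

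For part (1), I would first dispose of the degenerate case $x^H Q^\top E x = 0$. Since $Q^\top E$ is PSD, this forces $Q^\top E x = 0$, and invertibility of $Q$ then yields $Ex = 0$. Substituting into the eigenvalue equation gives $(J-R)Qx = \lambda E x = 0$, so $x \in \operatorname{null}(E) \cap \operatorname{null}((J-R)Q)$, contradicting regularity via Lemma~\ref{Lemma1}. Hence $x^H Q^\top E x > 0$, and dividing through yields $\real(\lambda) = -x^H Q^\top R Q x / x^H Q^\top E x \leq 0$, which proves the localization.

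For part (2), the forward direction uses the same identity: if $\lambda$ is a finite eigenvalue on the imaginary axis with eigenvector $x$, then $\real(\lambda) = 0$ combined with $x^H Q^\top E x > 0$ forces $x^H Q^\top R Q x = 0$; since $R \succeq 0$, taking a PSD square root gives $RQx = 0$. The equation then collapses to $JQx = \lambda Ex$, exhibiting $x$ as an eigenvector of $zE - JQ$ at $\lambda$ with $RQx = 0$. Conversely, if $x \neq 0$ satisfies $JQx = \lambda Ex$ and $RQx = 0$ for some $\lambda$ on the imaginary axis, then $(J-R)Qx = JQx = \lambda Ex$, so $\lambda$ is a finite eigenvalue of $L(z)$.

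The only real obstacle is the handling of the boundary case $x^H Q^\top E x = 0$ in part (1); this is exactly where regularity must be invoked through Lemma~\ref{Lemma1}, and without that step the bound $\real(\lambda) \leq 0$ is not even well-defined from the identity. Once that subtlety is resolved, both parts follow from the same short computation.
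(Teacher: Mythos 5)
Your proof is correct and follows essentially the same route as the paper's: the same identity obtained by left-multiplying the eigenvalue equation by $(Qx)^H$, the same use of Lemma~\ref{Lemma1} to rule out $x^H Q^\top E x = 0$ via regularity, and the same two-line argument for both directions of part (2). The only cosmetic difference is that you explicitly split off the skew-symmetric term before taking real parts, whereas the paper writes the real-part formula directly; the substance is identical.
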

\begin{proof}
Let $\lambda \in \mathbb C$ be an eigenvalue of $z E-(J-R)Q$ and let $x\in \mathbb C^n\setminus\{0\}$
be such that
\begin{equation}\label{eqn:DH_eig_1}
(J-R)Qx=\lambda Ex.
\end{equation}
Multiplying~\eqref{eqn:DH_eig_1} by $x^HQ^{\top}$ from the left, we get
\begin{equation}\label{eqn:DH_eig_2}
x^HQ^{\top}(J-R)Qx=\lambda\, x^HQ^{\top}Ex.
\end{equation}
Note that $x^HQ^{\top}Ex \neq 0$, because if $x^HQ^{\top}Ex=0$, then we have
$Q^{\top}Ex=0$ as $Q^{\top}E \succeq 0$, and thus $Ex =0$ as $Q$ is invertible. Using this in~\eqref{eqn:DH_eig_1},
we have $(J-R)Qx=0$. This implies that $x \in \operatorname{null}(E)\cap \operatorname{null}((J-R)Q)$ which is,
by Lemma~\ref{Lemma1}, a contradiction to the regularity of the pair $(E,(J-R)Q)$.

Thus, by~\eqref{eqn:DH_eig_2} we have
\begin{equation}\label{eqn:DH_eig_3}
\real{(\lambda)}=-\frac{x^HQ^{\top}RQx}{x^HQ^{\top}Ex} \leq 0,
\end{equation}
because $Q^{\top}RQ \succeq 0$ as $R\succeq 0$, and $x^HQ^{\top}Ex >0$. This completes the proof of 1).

In the proof of 1), if  $\lambda \in i\mathbb R$, then from~\eqref{eqn:DH_eig_3} it follows that
$x^HQ^{\top}RQx=0$. This implies that $RQx=0$, since $R\succeq 0$. Using this in~\eqref{eqn:DH_eig_1} implies that
$(\lambda E-JQ)x=0$.

Conversely, let $\lambda \in i\mathbb R$ and $x\in \mathbb C^n\setminus\{0\}$ be such that
$RQx=0$ and $(\lambda E-JQ)x=0$. Then this trivially implies that $\lambda$ is also an
eigenvalue of the pencil $\lambda E-(J-R)Q$ with eigenvector $x$. This completes the proof of 2).
\end{proof}
Making use of these preliminary results, we have the following stability characterization.
\begin{theorem}\label{thm:stable_semidef_R}
Every regular DH matrix pair $(E,(J-R)Q)$ of index at most one is stable.
\end{theorem}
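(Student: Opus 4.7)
My plan is to reduce stability to showing two things, one of which is immediate from Lemma~\ref{lem:DH_pair_prop}, and the other of which requires a short Jordan chain argument that leverages the PSD structure of $Q^{\top}E$ together with Lemma~\ref{Lemma1}.

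By Definition~\ref{prop1}, to conclude that $(E,(J-R)Q)$ is stable I need (i)~regularity and index at most one (given by hypothesis), (ii)~all finite eigenvalues in the closed left half-plane, and (iii)~finite eigenvalues on the imaginary axis are semisimple. Item~(ii) is precisely part~1) of Lemma~\ref{lem:DH_pair_prop}. So the only real work is~(iii).

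To prove (iii), I would argue by contradiction: suppose $\lambda \in i\R$ is a finite eigenvalue of $zE-(J-R)Q$ that is not semisimple. Then there exist $x_1 \in \C^n\setminus\{0\}$ and a generalized eigenvector $x_2 \in \C^n$ with
\begin{equation*}
(J-R)Qx_1 = \lambda E x_1, \qquad (J-R)Qx_2 = \lambda E x_2 + E x_1.
\end{equation*}
By Lemma~\ref{lem:DH_pair_prop} part~2), $RQx_1=0$ and $JQx_1 = \lambda E x_1$. In particular $x_1^H Q^{\top} R = (RQx_1)^H = 0$, so multiplying the generalized eigenvector equation from the left by $x_1^H Q^{\top}$ yields
\begin{equation*}
x_1^H Q^{\top} J Q x_2 \;=\; \lambda\, x_1^H Q^{\top} E x_2 + x_1^H Q^{\top} E x_1.
\end{equation*}
Now use antisymmetry of $J$ together with the eigenvector equation for $x_1$: since $JQx_1=\lambda E x_1$, taking conjugate transpose and using $J^{\top}=-J$ and $Q^{\top}E = E^{\top}Q$ (from $Q^{\top}E=E^{\top}Q$) gives $x_1^H Q^{\top} J = -\bar\lambda\, x_1^H Q^{\top} E$. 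Substituting into the previous display and writing $\alpha := x_1^H Q^{\top} E x_2$ gives
\begin{equation*}
-\bar\lambda\, \alpha \;=\; \lambda\,\alpha + x_1^H Q^{\top} E x_1.
\end{equation*}
Since $\lambda\in i\R$, we have $-\bar\lambda = \lambda$, so $x_1^H Q^{\top} E x_1 = 0$.

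From $Q^{\top}E\succeq 0$ this forces $Q^{\top}E x_1 = 0$, and since $Q$ is invertible, $E x_1 = 0$. Feeding this back into the eigenvector equation gives $(J-R)Q x_1 = \lambda E x_1 = 0$, hence $x_1 \in \operatorname{null}(E)\cap \operatorname{null}((J-R)Q)$ with $x_1\neq 0$. By Lemma~\ref{Lemma1}, this contradicts the regularity of $(E,(J-R)Q)$. Therefore no such Jordan chain exists, so every imaginary-axis eigenvalue is semisimple, completing the proof.

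The main obstacle, if any, is bookkeeping the conjugation carefully in the step that converts the eigenvector equation for $x_1$ into the identity $x_1^H Q^{\top} J = -\bar\lambda\, x_1^H Q^{\top} E$, and then exploiting $\bar\lambda = -\lambda$ on the imaginary axis to collapse the two $\lambda\alpha$ terms. Once that cancellation is in hand, the PSD structure of $Q^{\top}E$ plus invertibility of $Q$ delivers $Ex_1=0$, and Lemma~\ref{Lemma1} does the rest; the index-at-most-one hypothesis is used only to fulfill that clause in the definition of stability.
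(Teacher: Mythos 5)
Your proof is correct and follows essentially the same route as the paper's: a contradiction argument on a length-two Jordan chain at an imaginary eigenvalue, using part 2) of Lemma~\ref{lem:DH_pair_prop} to kill the $R$-terms, the skew-symmetry of $J$ and $\lambda=-\bar\lambda$ to force $x_1^H Q^\top E x_1=0$, and then $Q^\top E\succeq 0$, invertibility of $Q$, and Lemma~\ref{Lemma1} to contradict regularity. One minor bookkeeping slip: the standalone identity should read $x_1^H Q^\top J=-\bar\lambda\, x_1^H E^\top$ (the relation $E^\top Q=Q^\top E$ only comes into play after right-multiplying by $Qx_2$), but the equation you actually substitute, $x_1^H Q^\top J Q x_2=-\bar\lambda\, x_1^H Q^\top E x_2$, is correct, so the argument stands.
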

\begin{proof} In view of Lemma~\ref{lem:DH_pair_prop}, to prove
the result it is sufficient to show that if $\lambda \in i\mathbb R$ is an
eigenvalue of the pencil $z E-(J-R)Q$, then $\lambda$ is semisimple.

Let us suppose that $\lambda \in i\mathbb R$ is a defective eigenvalue of the pencil $zE-(J-R)Q$
and the set $\{x_0,x_1,\ldots,x_{k-1}\}$ forms a Jordan chain of length $k$ associated with $\lambda$, see e.~g. \cite{GohLR82}, i.~e. $x_0 \neq 0$ and
\begin{eqnarray}
(\lambda E-(J-R)Q)x_0=0,\quad (\lambda E-(J-R)Q)x_1&=&Ex_0,\nonumber\\
(\lambda E-(J-R)Q)x_2&=&Ex_1,\nonumber\\
\vdots \label{thm:stable_DH_1}\\
 (\lambda E-(J-R)Q)x_{k-1}&=&Ex_{k-2}.\nonumber
\end{eqnarray}
Note that by Lemma~\ref{lem:DH_pair_prop}, we have that $(\lambda E-(J-R)Q)x_0=0$ implies that
\begin{equation}\label{thm:stable_DH_2}
(\lambda E-JQ)x_0=0\ \mbox{\rm and }\ RQ x_0=0.
\end{equation}
By~\eqref{thm:stable_DH_1}, $x_0$ and $x_1$ satisfy
\begin{equation}\label{thm:stable_DH_3}
(\lambda E-(J-R)Q)x_1=Ex_0.
\end{equation}
Multiplying~\eqref{thm:stable_DH_3} by $x_0^HQ^{\top}$ from the left, we obtain
\[
x_0^H(\lambda Q^{\top}E-(Q^{\top}JQ-Q^{\top}RQ))x_1=x_0^HQ^{\top}Ex_0.
\]
This implies that
\begin{eqnarray}\label{thm:stable_DH_4}
-x_1^H(\lambda Q^{\top}E -Q^{\top}JQ)x_0 +x_1^HQ^{\top}RQ x_0 = x_0^HQ^{\top}Ex_0,
\end{eqnarray}
where the last equality follows by the fact that $Q^{\top}E=E^{\top}Q$, $J^{\top}=-J$, and $R^{\top}=R$.
Thus, by using~\eqref{thm:stable_DH_2} in~\eqref{thm:stable_DH_4}, we get
$x_0^HQ^{\top}Ex_0=0$. But this implies that $Q^{\top}Ex_0=0$ as $Q^{\top}E \succeq 0$ and
$Ex_0 =0$ as $Q$ is invertible. Since $x_0$ is an eigenvector of the pencil $zE-(J-R)Q$  to
$\lambda$, $Ex_0=0$ implies that $(J-R)Qx_0=0$. This means that
$0\neq x_0  \in \operatorname{null}(E)\cap \operatorname{null}((J-R)Q)$, which contradicts the regularity
of the pair $(E,(J-R)Q)$. Therefore there does not exist a vector $x_1 \in \mathbb C^n$ satisfying~\eqref{thm:stable_DH_1}. Hence
$\lambda$ is semisimple.
\end{proof}
We note that the proofs of Lemma~\ref{lem:DH_pair_prop} and Theorem~\ref{thm:stable_semidef_R}
highly depend on the invertibility of $Q$. In fact, we have used the fact that
$(Q^{\top}E,Q^{\top}(J-R)Q)$ is regular, because $Q$ invertible implies that
$(E,(J-R)Q)$ is regular if and only if $(Q^{\top}E,Q^{\top}(J-R)Q)$ is regular.
If  $Q$ is a singular matrix and $Q^{\top}E=E^{\top}Q \succeq 0$, then the pair
$(Q^{\top}E,Q^{\top}(J-R)Q)$ is always singular, but the pair $(E,(J-R)Q)$ may be regular.

For example, consider the matrices
\[
E=\mat{cc}1 &0\\0&2\rix,\quad J=\mat{cc}0 &1\\-1&0\rix,\quad R_1=\mat{cc}1 &0\\0&1\rix,\quad R_2=\mat{cc}1 &0\\0&0\rix,
\]
and the singular matrix $Q=\mat{cc}0 &0\\0&1\rix$. Then the matrix pair $(E,(J-R_1)Q)$ is regular, of index zero, and
has two simple eigenvalues $0$ and $-0.5$, which implies that $(E,(J-R_1)Q)$ is stable. On the other hand
the matrix pair $(E,(J-R_2)Q)$ is regular, of index zero, and has a defective eigenvalue of multiplicity two at the origin, which implies
that it is not stable. This shows that the invertibility of $Q$ is necessary
in Lemma~\ref{lem:DH_pair_prop} and Theorem~\ref{thm:stable_semidef_R}.

However, parts of Lemma~\ref{lem:DH_pair_prop} and Theorem~\ref{thm:stable_semidef_R}
also hold for singular $Q$ with an extra assumption on the eigenvectors of the pencil $zE-(J-R)Q$, as stated in Theorem~\cite[Theorem 2]{gillis2018computing}. For more properties of DH matrix pairs with singular $Q$, we refer to~\cite{MehMW18}.
 
\section{Matrix factorization}  \label{sec:matfac}

Matrix factorization is a central theme in numerical linear algebra (NLA), where it takes many different forms, 
e.g., QR decomposition, Cholesky decomposition, eigenvalue decomposition, the singular value decomposition, to cite a few.  
It is the key to some of the most fundamental NLA problems, such as solving linear systems of equations. Although these problems are classical, and old, they are still an active and important research topic. In the last 20 years, they have become a central tool for linear dimensionality reduction in many data analysis and machine learning tasks; 
see, e.g., 
\cite{singh2008unified}, 
\cite{Udell2016lowrank}, 
\cite{Markovsky2019},  
and \cite[Chapter 1]{gillis2020book}, 
and the references therein. 
 
 In these lecture notes, we will encounter very particular forms of matrix factorizations, in particular the factorization $A = (J-R)Q$ 
 where 
 $J$ is antisymmetric, that is, $J = -J^{\top}$, 
 $R$ is PSD, that is, $R \succeq 0$, and 
 $Q$ is positive definite, that is, $Q \succ 0$. 
This factorization takes its roots from  port-Hamiltonian systems, as described in the previous section. 


  \section{Optimization} \label{sec:optimization}

 In these lecture notes, we will rely mostly on four optimization strategies, briefly described in the next sections. 
 First, let us recall what is an optimization problem. To define an optimization problem, one first needs to choose 
 \begin{itemize} 

\item the variables, $x \in \mathbb{R}^n$, that are the degrees of freedom of the given problem, 

\item the feasible set, $\mathbb{E} \subseteq \mathbb{R}^n$,  that restrict the values of the variables and is typically defined via constraints, and 
 
 \item  the objective function, $f(x) : \mathbb{R}^n \mapsto \mathbb{R}$, that allows to compare feasible solutions:  $x_1  \in \mathbb{E}$ is better than $x_2  \in \mathbb{E}$ if $f(x_1) \leq f(x_2)$ when $f$ is minimized. 
 
\end{itemize} 
 Given these three objects, the corresponding optimization problem is written as 
 \[
 \min_{x \in \mathbb{E}} f(x). 
 \]

A famous example is the knapsack problem: you are given a set of $n$ objects, each one with a value $c_i$, and a weight $w_i$. You are allowed a fixed maximum weight, $W$, and want to maximize the value within your knapsack. 
The variables are $x \in \{0,1\}^n$, where $x_i=1$ if object $i$ is put in the knapsack, and $x_i=0$ otherwise. The optimization problem is written as 
\[
\max_{x \in \{0,1\}^n} \sum_{i=1}^n c_i x_i 
\quad \text{such that} \quad 
\sum_{i=1}^n w_i x_i  \leq W. 
\] 
Note that here the goal is to maximize the objective function. However, one could equivalently minimize $-\sum_{i=1}^n c_i x_i$. Note also that the feasible set is given by 
\[
\mathbb{E} = \left\{ x \in \{0,1\}^n \ \Big| \ \sum_{i=1}^n w_i x_i  \leq W \right\}, 
\]
and is a discrete set. 


\subsection{Block coordinate descent (BCD) methods} 

Let us consider the following optimization problem 
\begin{equation} \label{eq:BCD}
\min_{x_1 \in \mathbb{E}_1, 
x_2 \in \mathbb{E}_2, \dots, 
x_p \in \mathbb{E}_p} \quad f(x_1,x_2,\dots,x_p) , 
\end{equation} 
where $\mathbb{E}_i \subseteq \mathbb{R}^{n_i}$ for $i \in [p] = \{1,2,\dots,p\}$ are the feasible sets for the blocks of variables. 
The variable $x = (x_1,x_2,\dots,x_p) \in \mathbb{R}^n$ is split into $p$ blocks of variables.

\paragraph{Exact BCD} In exact BCD methods, each block of variables, $x_i \in \mathbb{E}_i$ for $i \in [p]$, is optimized exactly and alternatively, while the values of the other blocks are fixed; see Algorithm~\ref{algo:EBCD}. 
\algsetup{indent=2em} 
\begin{algorithm}[ht!]
\caption{Exact block coordinate descent \label{algo:EBCD}}
\begin{algorithmic}[1] 
\REQUIRE An optimization problem of the form~\eqref{eq:BCD}. 
\ENSURE An approximate solution to~\eqref{eq:BCD}.   
    \medskip  
\STATE Choose an initial point $x^{(0)} \in \mathbb{R}^n$.
\FOR {$k$ = 1, 2, \dots }  
  \FOR {$i = 1, 2, \dots, p$ }  

\STATE Update 
\[ 
x_i^{(k)} 
= \argmin_{y \in \mathbb{E}_i} 
f\left(x_{1}^{(k)}, \dots, 
x_{i-1}^{(k)}, 
y, 
x_{i+1}^{(k-1)}, 
\dots, x_p^{(k-1)}\right) 
\]  
  
\ENDFOR 
\ENDFOR 
\end{algorithmic}  
\end{algorithm}   

Note that Algorithm~\ref{algo:EBCD} cyclically updates the variables, but other strategies are possible (e.g., randomly shuffle the order of the blocks of variables before each outer loop); see~\cite{wright2015coordinate}. 

Problem~\eqref{eq:BCD} can be non-convex in general, and hence might have many local minima. In non-convex optimization when $f$ is differentiable, convergence of algorithms are usually studied in terms of convergence to a first-order stationary point. The point $x$ is a first-order stationary point if the first-order approximation of the function around $x$, 
 \[
 f(x+\delta x) \approx f(x) + \nabla f(x)^\top \delta x, 
 \] 
is larger than $f(x)$ in the domain, that is,  $\nabla f(x)^\top \delta x \geq 0$ for any feasible direction $\delta x$, that is, for any $\delta x$ pointing inside the domain (including $\delta x \rightarrow 0$ which might be tangent to the domain).  
For example, if there is no constraint, 
$\mathbb{E} = \mathbb{R}^n$, all directions are feasible and these conditions reduce to $\nabla f(x) = 0$. In the general constrained case, these conditions are referred to as Karush-Khun-Tucker (KKT) optimality conditions, see, e.g., 
\cite{wright1999numerical} for more details.

Convergence of Exact BCD methods are guaranteed under rather strong conditions, as stated by the following theorem. 
\begin{theorem}\cite[Proposition 2.7.1]{Bertsekas99, Bertsekas99b} \label{th:nblocks}
The limit points of the iterates of an exact
BCD algorithm are stationary points provided that the following  conditions hold:
\begin{enumerate}

\item the objective function is continuously differentiable,  

\item each block of variables is required to belong to a closed convex set,

\item the minimum computed at each iteration for a given block of variables is uniquely attained, and 

\item the objective function values in the interval between all iterates and the next (which is obtained by updating a single block of variables) is monotonically decreasing. 

\end{enumerate}
The condition 4 can be dropped if each block of variables belongs to a convex and compact set. 

The order in which the blocks are updated is arbitrary, as long as each block is updated at least once every $K$ iterations, where $K$ is a fixed constant; this is referred to as the essentially cyclic block update. 
\end{theorem}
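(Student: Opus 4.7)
My plan is to follow the classical argument due to Bertsekas, which rests on three pillars: monotone decrease of $f$ along the iterates, continuity of each block-update map (a consequence of uniqueness plus Berge's maximum theorem), and the product structure $\mathbb{E} = \mathbb{E}_1 \times \cdots \times \mathbb{E}_p$ induced by condition~2. First, exact block minimization guarantees $f(x^{(k+1)}) \leq f(x^{(k)})$ automatically, and condition~4 is what rules out pathological behaviour of $f$ along the segment joining successive iterates; in the compact case this can be bypassed because $f$ is bounded below on $\mathbb{E}$, so $\{f(x^{(k)})\}$ converges to some limit $f^{\star}$ regardless. By continuity of $f$ (condition~1), any limit point $\bar x$ of a subsequence $x^{(k_j)} \to \bar x$ satisfies $f(\bar x) = f^{\star}$.

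Next, for each $i \in [p]$ I would define the block-update map
\[
T_i(x) \;=\; \bigl( x_1, \ldots, x_{i-1},\; \argmin_{y \in \mathbb{E}_i} f(x_1,\ldots,x_{i-1},y,x_{i+1},\ldots,x_p),\; x_{i+1}, \ldots, x_p \bigr).
\]
Closedness and convexity of $\mathbb{E}_i$ (condition~2), continuity of $f$, and uniqueness of the argmin (condition~3), together with Berge's maximum theorem, imply that $T_i$ is a continuous single-valued map on $\mathbb{E}$. This is the step where condition~3 does its essential work: without uniqueness the argmin would only be an upper hemicontinuous set-valued correspondence, and the limit-transfer argument below would collapse.

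The crux is to propagate the value $f^{\star}$ through a full sweep. Writing $x^{(k,0)} := x^{(k)}$ and $x^{(k,i)} := T_i(x^{(k,i-1)})$ so that $x^{(k,p)} = x^{(k+1)}$, each intermediate sequence satisfies $f(x^{(k,i)}) \to f^{\star}$ by sandwiching with the monotone decrease. Continuity of $T_1$ gives $x^{(k_j,1)} \to T_1(\bar x)$, whence $f(T_1(\bar x)) = f^{\star} = f(\bar x)$; uniqueness of the block-1 minimizer at $\bar x$ then forces $T_1(\bar x) = \bar x$. Iterating with $T_2, \ldots, T_p$, each application using continuity to carry the subsequential limit one step forward, yields $T_i(\bar x) = \bar x$ for every $i$. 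The essentially cyclic scheduling guarantees that every block is visited within any window of $K$ iterations, so by passing to a further subsequence the same argument applies to all blocks regardless of update order.

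Finally, the fixed-point relations $T_i(\bar x) = \bar x$ say that $\bar x_i$ minimizes $y \mapsto f(\bar x_1, \ldots, \bar x_{i-1}, y, \bar x_{i+1}, \ldots, \bar x_p)$ on the convex set $\mathbb{E}_i$, yielding $\nabla_i f(\bar x)^\top (y_i - \bar x_i) \geq 0$ for all $y_i \in \mathbb{E}_i$ by the standard first-order condition. The Cartesian product structure of $\mathbb{E}$ means any feasible direction $y - \bar x$ decomposes block-wise, so summing these inequalities gives $\nabla f(\bar x)^\top (y - \bar x) \geq 0$ for all $y \in \mathbb{E}$, which is exactly the KKT stationarity condition for the full problem. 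The main technical obstacle is the continuity of each $T_i$ via Berge's theorem, since that is the single substantive ingredient: once it is in hand, the propagation of $f^{\star}$ through a cycle gives the fixed-point property for each block update almost mechanically.
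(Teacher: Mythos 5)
The paper does not prove this statement; it is quoted from Bertsekas, so your proposal has to be measured against the standard argument for \cite[Proposition 2.7.1]{Bertsekas99}. Your overall architecture matches it: monotone decrease of $f$, propagation of the limit value $f^{\star}$ through one sweep, fixed-point relations $T_i(\bar x)=\bar x$ for each block, and then the block-wise first-order conditions summed over the Cartesian product $\mathbb{E}_1\times\cdots\times\mathbb{E}_p$. The last step and the handling of the essentially cyclic order are fine.

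The gap is in the single step you yourself identify as the crux: continuity of the block-update maps $T_i$ via Berge's maximum theorem. Berge requires the constraint correspondence to be compact-valued; condition~2 only gives closed convex sets $\mathbb{E}_i$, and on an unbounded closed set, continuity of $f$ plus uniqueness of the attained minimizer do \emph{not} imply that the argmin map is continuous (the minimizer can escape or jump without violating uniqueness at any fixed parameter value). So the assertion ``$x^{(k_j,1)}\to T_1(\bar x)$'' is unjustified in the generality of the theorem, and without it the propagation to blocks $2,\dots,p$ collapses, since you need to know that the intermediate iterates converge to $\bar x$ before you can pass to the limit in the block-$2$ optimality inequalities. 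A telling symptom is that condition~4 never enters your argument, whereas the theorem insists on it unless the $\mathbb{E}_i$ are compact: in Bertsekas's proof it is used exactly at this step. There one normalizes the displacement, $s^{k_j}=(x_1^{k_j+1}-x_1^{k_j})/\gamma^{k_j}$ with $\gamma^{k_j}=\|x_1^{k_j+1}-x_1^{k_j}\|$, extracts a limit direction $\bar s$ on the (compact) unit sphere, uses condition~4 to sandwich $f(x^{k_j}+\epsilon\bar\gamma s^{k_j})$ between $f(x_1^{k_j+1},x_2^{k_j},\dots)$ and $f(x^{k_j})$, and concludes that every point $\bar x+\epsilon\bar\gamma\bar s$, $\epsilon\in[0,1]$, attains the block-$1$ minimum at $\bar x$ --- contradicting condition~3 and forcing $\gamma^{k_j}\to 0$. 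This localized compactness argument is what replaces your Berge step. Your proof as written is correct only in the compact case (the situation covered by the theorem's final remark, where condition~4 may indeed be dropped); for the general closed convex case you need to substitute the normalization-and-segment argument above.
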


\paragraph{Inexact BCD} In many situations, exact optimization of each block is either impossible (the subproblem does not admit a closed form) or too costly (we will encounter examples later on). It is therefore appropriate to apply a few iterations of a cheap iterative method that decreases the objective function, 
such as a gradient step (see the next section). 
We refer the interested reader to the 
proximal alternating linearized minimization (PALM) algorithm~\cite{bolte2014proximal}, and to the block successive upper-bound minimization (BSUM) framework~\cite{razaviyayn2013unified}, for important examples of inexact BCD schemes, with strong convergence guarantees.

\subsection{First-order methods} 

Given the problem 
 \begin{equation} \label{eq:minf}
 \min_{x \in \mathbb{E}} f(x),  
 \end{equation}
 where $f$ is differentiable, a workhorse approach to tackle it is projected gradient method (PGM); see Algorithm~\ref{algo:gradient}. 
\algsetup{indent=2em} 
\begin{algorithm}[ht!]
\caption{Projected gradient descent (PGM) \label{algo:gradient}}
\begin{algorithmic}[1] 
\REQUIRE An optimization problem of the form~\eqref{eq:minf} where $f$ is differentiable. 
\ENSURE An approximate solution to~\eqref{eq:minf}, $x^{(k)}$.   
    \medskip  
\STATE Choose an initial point $x^{(0)} \in \mathbb{R}^n$.
\FOR {$k$ = 0, 1, 2, \dots }  

\STATE Update 
\[ 
x^{(k+1)}  
= \mathcal{P}_{\mathbb{E}} 
\left( 
 x^{(k)} - \gamma_k \nabla f \left( x^{(k)} \right)  
 \right) , 
\] 
where  $\mathcal{P}_{\mathbb{E}}(x) = \argmin_{y \in \mathbb{E}} \| x - y \|_2$ is the Euclidean projection onto $\mathbb{E}$, and 
$\gamma_k$ is an appropriate step size, typically chosen such that $f\left( x^{(k+1)} \right)  \leq f \left( x^{(k)} \right)$.  
\ENDFOR 
\end{algorithmic}  
\end{algorithm}

Some remarks are in order: 
\begin{itemize}

\item The projection onto the feasible set, $\mathcal{P}_{\mathbb{E}}(\cdot)$, might not be easy to compute. 
If it is not possible/computationally too heavy, a possible approach is to put some constraints defining $\mathbb{E}$  in the objective function as penalties; 
see, e.g.,~\cite{wright1999numerical}.  

\item Computing the step sizes might be tricky. However, if $f$ is continuously differentiable, there always exists sufficiently small step sizes that guarantee the decrease of the objective function (proving this is a simple exercise, using the first-order Taylor expansion of $f$ around the current iterate). 

\end{itemize}

If $f$ is Lipschitz continuous, that is, 
$\| \nabla f(x) - \nabla f(y) \|_2 \leq L \| x - y \|_2$ for some $L$ for all $x,y \in \mathbb{E}$ where $\mathbb{E}$ is a convex set, 
the step size $\gamma_k = \frac{1}{L}$ guarantees the objective function to decrease. In fact, the descent lemma for a Lipschitz continuous function guarantees that 
\[
f(y) 
\; \leq \; 
g(y) = 
f(x) + \nabla f(x)^\top (y-x) 
+ \frac{L}{2}  \| y-x \|_2^2. 
\] 
We have that 
\begin{align}
y^* & = 
\argmin_{y \in \mathbb{E}} g(y) \nonumber \\ 
& = \mathcal{P}_{\mathbb{E}} 
\left( 
 x - \frac{1}{L} \nabla f \left( x \right)  
 \right), \label{eq:projLip}
\end{align} 
and hence $f(y^*)  \leq g(y^*) \leq g(x) = f(x)$. 

\begin{remark} 
The equality in~\eqref{eq:projLip} follows from the following two  facts: 
\begin{enumerate}
\item the optimal unconstrained solution is given by 
\begin{align*}
\argmin_{y} g(y) & = 
\argmin_{y} f(x) + \nabla f(x)^\top (y-x) 
+ \frac{L}{2}  \| y-x \|_2^2  \\  
& = x - \frac{1}{L} \nabla f \left( x \right), 
\end{align*} 
which follows by setting the gradient of $g$ to zero, since $g$ is a simple quadratic function, whose Hessian is a scaling of the  identity matrix. 

\item The function $g$ is isotropic (that is, the level sets are spheres around the unconstrained solution), as its Hessian is the identity matrix, and hence the optimal solution of $\min_{y \in \mathbb{E}} g(y)$ is given by the projection of the unconstrained solution, $x - \frac{1}{L} \nabla f \left( x \right)$, onto the feasible set. 

\end{enumerate}  
\end{remark}

Interestingly, in the unconstrained case, we can quantify the decrease as follows  
\begin{align*}
f(y^*) \leq g(y^*) 
& = 
g \left( x - \frac{1}{L} \nabla f \left( x \right)  \right)  \\ 
& = 
f(x) - \frac{1}{L} \nabla f(x)^\top  \nabla f \left( x \right) 
+ \frac{L}{2}  \left\| \frac{1}{L}  \nabla f ( x ) \right\|_2^2 \\ 
& = 
f(x) - \frac{1}{2L} \| \nabla f ( x ) \|_2^2. 
\end{align*}

\paragraph{Convex problems}  

If $f$ is convex and Lischitz continuous, 
PGM is guaranteed to decrease the objective function values at a rate $\mathcal{O}(1/k)$~\cite{Nes04}.  

It turns out PGM can be accelerated, to achieve an optimal rate  of $\mathcal{O}(1/k^2)$; it is optimal among methods only using the first-order information, that is, the gradient, at each iteration, and under only the convexity and Lipschitz continuity of $f$. 
This is achieved by introducing another sequence of iterates,  and ``pushing'' the iterates further in the descent direction; 
see Figure~\ref{extrapol} for an illustration.  
\begin{figure}[ht!]
\begin{center}
\includegraphics[width=0.7\textwidth]{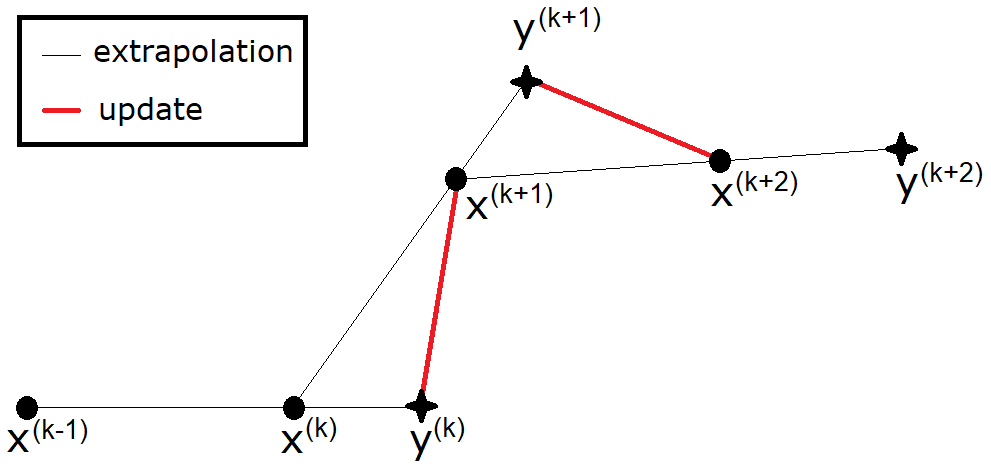}
\caption{Illustration of the use of extrapolation/inertia/momentum to accelerated the convergence of a sequence of iterates $\left\{ x^{(k)} \right\}_{k=0,1,2,\dots}$.  \label{extrapol}}
\end{center}
\end{figure}
This is known  in the literature as adding momentum, inertia or extrapolation. 
Algorithm~\ref{algo:fgm} provides a pseudocode for such a method, referred to as a fast gradient method (FGM). Note that the extrapolated sequence, 
$\{y^{(k)}\}$, might not be feasible. 
\algsetup{indent=2em} 
\begin{algorithm}[ht!]
\caption{Fast gradient method (FGM)~\cite{nes83, Nes04} \label{algo:fgm}}
\begin{algorithmic}[1] 
\REQUIRE An optimization problem of the form~\eqref{eq:minf} where $f$ is differentiable.  
\ENSURE An approximate solution to~\eqref{eq:minf}, $x^{(k)}$.   
    \medskip  
\STATE Choose an initial point $x^{(0)} \in \mathbb{R}^n$. 
Let $y^{(0)} = x^{(0)}$. 
\FOR {$k$ = 0, 1, 2, \dots }  

\STATE Update: \hspace{0.63cm}    $x^{(k+1)}   = 
 \mathcal{P}_{\mathbb{E}} 
\left( y^{(k)} - \gamma_k \nabla f \left( y^{(k)} \right)  \right)$, 
where $\gamma_k$ is a step size.  
\STATE Extrapolate: $y^{(k+1)} = x^{(k+1)} + \beta_k \left( x^{(k+1)} - x^{(k)} \right)$, 
 where $\beta_k$ is the extrapolation parameter, which can for example be chosen 
as 
$\beta_k = \frac{1-\alpha_k}{\alpha_{k+1}}$ where 
$\alpha_{k+1} = \frac{1 + \sqrt{1+ 4 \alpha_k^2}}{2}$ for some $\alpha_0 \in (0,1)$.  
\ENDFOR 
\end{algorithmic}  
\end{algorithm}

If $f$ is also strongly convex, that is, there exists a constant $\mu > 0$ such that 
\[
f(y) 
\; \geq  \; 
g(y) = 
f(x) + \nabla f(x)^\top (y-x) 
+ \frac{\mu}{2}  \| y-x \|_2^2, 
\] 
then the above rates become linear, namely 
$\mathcal{O}\left( \left( \frac{\kappa +1}{\kappa - 1} \right)^k \right)$ for PGM where $\kappa = \frac{L}{\mu} > 1$ is the conditioning of $f$, 
and 
$\mathcal{O}\left( \left( \frac{\sqrt{\kappa} +1}{\sqrt{\kappa} - 1} \right)^k \right)$ for FGM, 
so that FGM also provides a significant acceleration. 

Note however that FGM does not guarantee the decrease of the objective function at each iteration, and restarting strategies (that is, restarting the extrapolation sequence, $y^{(k)}$, and taking a standard gradient step)  might be useful to further accelerate convergence; 
see, e.g.,~\cite{o2015adaptive}.

\paragraph{Non-convex problems} 

In the non-convex setting, acceleration via extrapolation can also be used. It has first been used extensively as a heuristic acceleration, without theoretical guarantees, and more recently with convergence guarantees; 
see~\cite{xu2013block, xu2017globally, hien2019extrapolNMF, hien2020inertial} 
and the references therein. However, the variants with theoretical guarantees typically converge slower in practice, as they do not allow a very aggressive extrapolation strategy. 
In this lecture notes, we will use it as a heuristic, with a restarting procedure which guarantees the objective function to decrease at each step.

\subsection{Semidefinite programming} 

A semidefinite program (SDP)~\cite{Vandenberghe1996} has the form 
\begin{align*} 
\min_{X \succeq 0}  
& \left\langle C, X \right\rangle \\ 
\text{ such that } 
& \left\langle A_i, X \right\rangle = b_i \text{ for } 
i \in [m], 
\end{align*}
where $C,X,A_i \in \mathbb{R}^{n \times n}$,  
$\left\langle C, X \right\rangle = \tr(X^\top C) 
= \sum_{i,j} C_{i,j} X_{i,j}$, and 
$X \succeq 0$ means that $X$ is PSD, that is, $X$ is symmetric and its eigenvalues are nonnegative (equivalently, $x^\top X x \geq 0$ for all $x \in \mathbb{R}^n$). 

Semidefinite programming has been used successfully in many applications, e.g., 
in systems and control, 
combinatorial optimization, and 
structural design, 
see~\cite{wolkowicz2012handbook} 
and the references therein. SDP is a convex optimization problem, since the set of PSD matrices is convex. 
Note that semidefinite programming generalizes 
linear optimization (a.k.a.\ linear programming), by requiring the matrix $X$ to be diagonal, and 
second-order cone optimization, where a constraints of the type 
$\| Ax-b\|_2^2 \leq t$, where $A$ is a matrix, $x$ is a vector of variables, $b$ is a vector of parameters and $t \geq 0$ is a variable, 
can be modelled as 
\[
\left( \begin{array}{cc}
I & Ax-b \\ 
(Ax-b)^\top & t 
\end{array} 
\right) \succeq 0. 
\]
In fact, using the Schur complement, the above matrix is PSD if and only if $t - (Ax-b)^\top (Ax-b) \geq 0$. 

Usually, semidefinite programs are solved via interior-point methods~\cite{nesterov1994interior} which are expensive, as they rely on applying a Netwon step which requires, in general, $O(n^6)$ operations. However, they allow to obtain high-accuracy solutions within a few iterations, 
having quadratic convergence. 
In these lectures notes, 
we will rely on the solver SDPT3~\cite{toh1999sdpt3, tutuncu2003solving} using the CVX modeling tool~\cite{cvx}.  

However, an active direction of research is to develop faster SDP solvers, for example using 
\begin{itemize} 
\item The Burer-Monteiro approach~\cite{burer2003nonlinear} that factorizes the variable $X = UU^\top$ where $U$ has few columns (namely $r \ll n$) so as to reduce the number of variables. Although it makes the problem non-convex, one can show that all local minima are global under some appropriate conditions (in particular, the optimal solution must have low rank and $r$ needs to be sufficiently large); see also \cite{boumal2016non, waldspurger2020rank} and the references therein for recent results. 

\item First-order methods (see the next section for an example) which are particularly appropriate if high-precision solutions are not necessary for the application at hand; 
see~\cite{yurtsever2021scalable} for a recent paper on this topic. 

\end{itemize}

\subsection{Example: the semidefinite Procrustes problem} \label{sec:exSDPproc}

The semidefinite Procrustes problem is the following: 
Given $A, B \in \mathbb{R}^{m,n}$, solve
\begin{equation} \label{eq:procSDP}
\min_{X \in \mathbb{R}^{n \times n}} \| AX - B \|_F^2 
\; \text{ such that } \; X \succeq 0. 
\end{equation}  
This problem occurs for example in structure analysis~\cite{brock1968optimal}, signal processing~\cite{suffridge1993approximation}, and, as we will see, in the study of port-Hamiltonian systems.

The projection, $\mathcal{P}_{\succeq 0}(X) $, of $X$ onto the set of PSD matrices, $\mathcal{S}^n_+$, can be performed efficiently, in $\mathcal{O}(n^3)$ operations. In fact, 
\begin{equation} \label{eq:projPSD}
\mathcal{P}_{\succeq 0}(X) 
\; 
= 
\; 
\argmin_{Y \in \mathcal{S}^n_+} \| X-Y \|_F 
\; 
= 
\; 
U \max(\Lambda, 0) U^\top,  
\end{equation}
where $(U,\Lambda)$ is the eigendecomposition of the symmetric 
matrix $\frac{X+X^\top}{2} = U \Lambda U^\top$, that is, the columns of $U$ contains the eigenvectors of $\frac{X+X^\top}{2}$, and the diagonal entries of $\Lambda$ its eigenvalues~\cite{Hig88b}. 
The objective function is Lipschitz smooth, with constant 
$L = \sigma_{\max}(A)^2$. 
The gradient of $\| AX - B \|_F^2$ w.r.t.\ $X$ is $2A^\top (AX - B)$. 
We can therefore apply PGM (Algorithms~\ref{algo:gradient}) and FGM (Algorithms~\ref{algo:fgm}) to~\eqref{eq:procSDP}, with computational cost of $\mathcal{O}(n^3)$ operations per iteration. 

One can also model~\eqref{eq:procSDP} as a semidefinite program, and use interior-point methods, which will require $\mathcal{O}(n^6)$ operations per iteration. For well-conditioned problem, with $\mu = \sigma_{\min}(A) \gg 0$, FGM will converge linearly, and relatively fast, and hence should be preferred. Figure~\ref{wellcondwithzoom} illustrates this case on a  randomly generated semidefinite Procrustes  problem.
\begin{figure}[ht!]
\begin{center}
\includegraphics[width=\textwidth]{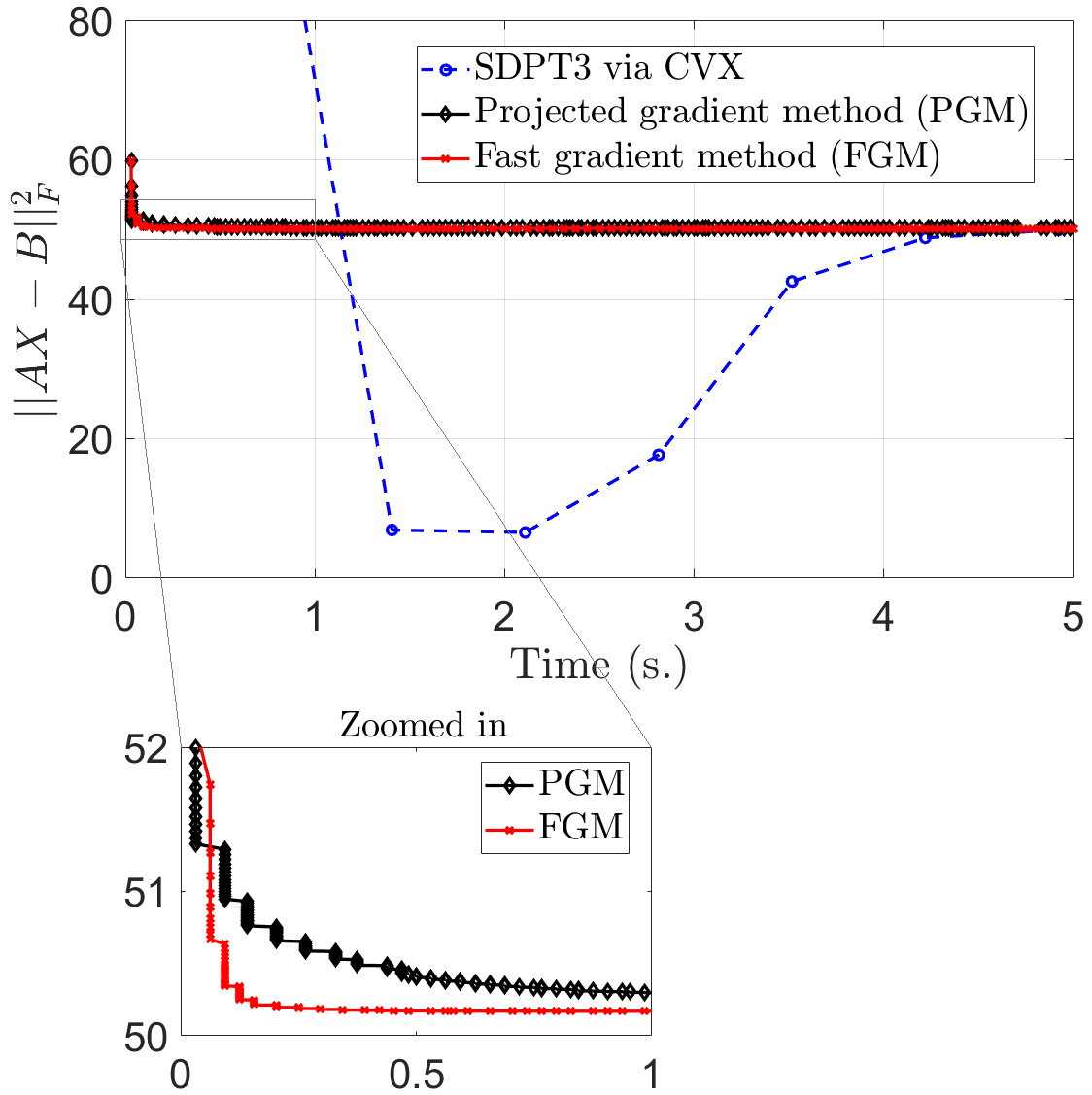}
\caption{Comparison of an IPM, PGM and FGM on a well-conditioned randomly generated semidefinite Procrusted problem, where the entries of $A$ and $B$ with $m = n = 60$ are generated using the normal distribution, 
\texttt{randn} in Matlab.  \label{wellcondwithzoom}}
\end{center}
\end{figure} 
We observe that IPM iterates take objective function values smaller than FGM at convergence. The reason is that SDPT3 uses infeasible intermediate solutions. 
We also observe, on the zoomed figure below, that FGM converges faster than PGM, as expected.

For a medium-scale ($n$ not much larger than 100) and ill-conditioned problem, with $\mu = \sigma_{\min}(A)$ close or equal to zero, IPM might be preferred. 
Figure~\ref{illcond} illustrates this case on a  randomly generated semidefinite Procrusted problem.
\begin{figure}[ht!]
\begin{center}
\includegraphics[width=\textwidth]{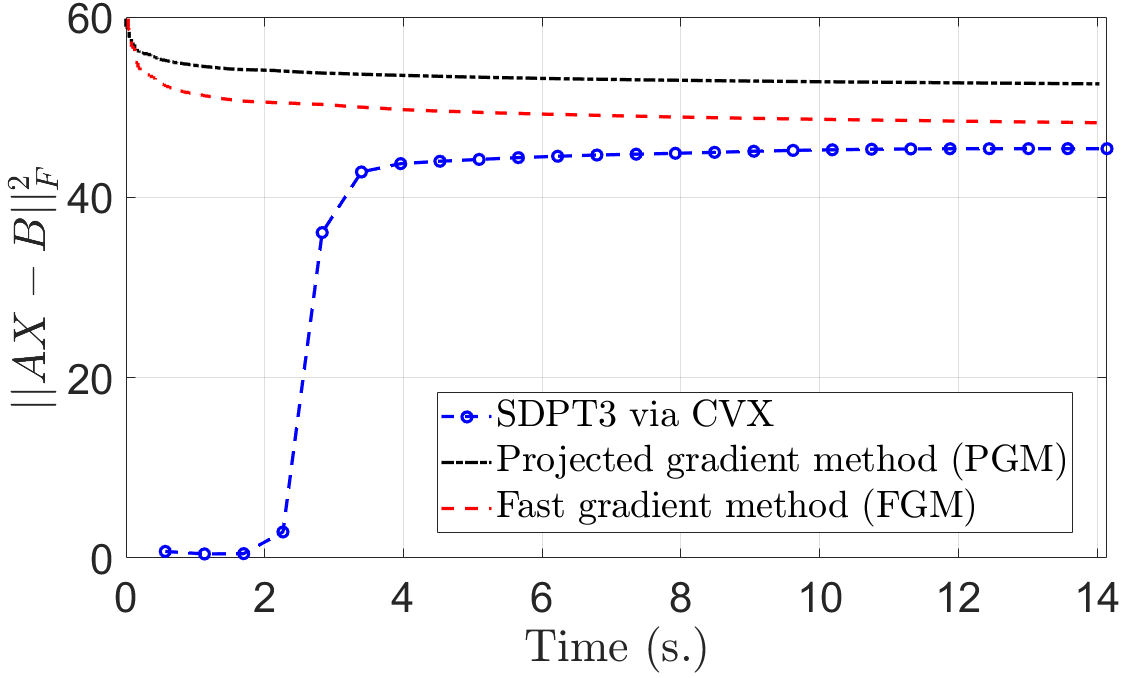}
\caption{Comparison of an IPM, PGM and FGM on an ill-conditioned randomly generated semidefinite Procrusted problem. 
The entries of $A$ and $B$ with $m = n = 60$ are generated using the normal distribution, 
\texttt{randn(n,n)} in Matlab. 
Then the SVD of $A$ is computed, $(U,\Sigma,V)$, and $A$ is replaced with $U \Sigma_{\text{ill}} V$ where the diagonal entries of $\Sigma_{\text{ill}}$ range from 1 to $10^{6}$ using log spaced values, so that $\kappa(A) = 10^6$.   \label{illcond}}
\end{center}
\end{figure}
We observe that IPM converges in about 20 iterations, while FGM was not able to converge within 8642 iterations (we stopped FGM when it attained the runtime of the IPM). 
Again, we observe that FGM converges faster than PGM, as expected.

\begin{remark}
The semidefinite Procrustes problem~\eqref{eq:procSDP} can be reformulated into an equivalent problem where $\sigma_{\min}(A) > 0$,  in which case first-order methods are often more effective than IPMs; 
see~\cite{GilS18} for the details. 
\end{remark}

\subsection{Trust-region methods}   \label{sec:trustregion}  
              
Another important class of optimization methods are trust-region methods~\cite{conn2000trust}. 
Since reviewing this rich class of methods is out of the scope of these notes, let us focus on a particular case  which will be useful when computing nearest stable matrices, and will illustrate the main idea behind trust-region methods.   

Assume you are given an optimization problem of the form 
\begin{equation} \label{eq:TR}
\min_x f(x) \quad \text{ such that } \quad x \in \mathbb{E},  
\end{equation} 
where $f$ is an non-convex and `hard' function to optimize, while $\mathbb{E}$ is nice convex set. 
Instead of trying to tackle~\eqref{eq:TR} directly (for example using projected gradient descent), 
trust-region methods will construct a model of $f(x)$ around the current iterate, $x^{(k)}$, and only trust this model in a neighbourhood around $x^{(k)}$. 
More precisely, let $x  = x^{(k)} + \Delta x$, and $f(x) \approx g( x^{(k)} + \Delta x)$ for $\Delta x$ sufficiently small. Typically, the model, $g(.)$, will be chosen as a quadratic function. Then, at iteration $k+1$, the following problem is solved, which is referred to as the trust-region subproblem, 
\[
\Delta x^* \quad = \quad 
\argmin_{\Delta x} g( x^{(k)} + \Delta x) 
\; \text{ such that } \;  \left(x^{(k)} + \Delta x\right) \in \mathbb{E} \, \text{ and } \, \| \Delta x \|_2 \leq \epsilon_k,  
\]
where $\epsilon_k > 0$ is a parameter that controls the size of the neighbourhood, referred to as the trust-region radius. The next iterate is obtained as $x^{(k+1)} = x^{(k)} + \Delta x^*$.   
If $f (  x^{(k+1)} ) \approx g( x^{(k+1)} )$, then the trust-region radius can be increased at the next iteration, otherwise it can be kept constant or decreased; many strategies exist to update the $\epsilon_k$'s. The step can also be rejected; in particular if 
$f( x^{(k+1)} ) > f(  x^{(k)} )$.


\chapter{Nearest stable matrix for continuous systems}   \label{chap:contsys}

In this chapter, we show how  dissipative Hamiltonian systems can be used to reformulate various nearest matrix problems for continuous-time LTI  systems.  

In Section~\ref{sec:reformcont}, we present a result from~\cite{GilS16} for continuous-time LTI system. 
In Section~\ref{sec:omegastab}, we explain how this result can be generalized to $\Omega$-stability, which requires the eigenvalues of the sought nearest system to belong to the set $\Omega$; this is the result from~\cite{choudhary2020approximating}. 
In Section~\ref{sec:otherapproaches}, we briefly discuss other approaches to tackle the nearest stable matrix problem. 
In Section~\ref{sec:minnomfeed}, we show how the ideas from Sections~\ref{sec:reformcont} and~\ref{sec:omegastab}  
can be used to solve the 
 static-state and 
 static-output feedback problems, which is the result from~\cite{gillis2020minimal}.

 \section{Introduction}   

Let us first consider the simplest case, a system of the form 
\[
\dot{x}(t)=Ax(t)+Bu(t),
\]
where $A\in \mathbb R^{n,n}$, $B\in \mathbb R^{n,m}$, $x$ is the state vector, and $u$ is the input vector. 
Such 
a system is stable if all eigenvalues of $A$ are in the \emph{closed} left half of the complex plane and all
eigenvalues on the imaginary axis are semisimple; see Theorem~\ref{th:stability}.  
We denote by $\mathbb S^{n,n}$ the set of stable matrices for continuous LTI systems. 

For a given unstable matrix $A$, the problem of finding the smallest perturbation that stabilizes $A$, or,
equivalently finding the nearest stable matrix $X$ to $A$ is an important problem~\cite{ONV13}, with application for example in system identification where one needs to identify a stable system from observations; see also Sections~\ref{sec:minnomfeed} and~\ref{sec:appldiscretesys}.  
More precisely,
we consider the following type-II distance problem. For a given unstable matrix $A$, compute
\begin{equation}\label{eq:prob_def}
\inf_{X\in \mathbb S^{n,n}} {\|A-X\|}_F^2,
\end{equation}
where ${\|\cdot\|}_F$ denotes the Frobenius norm of a matrix and $\mathbb S^{n,n}$ is the set of
all stable matrices of size $n\times n$. 

The set $\mathbb S^{n,n}$ is highly non-convex, and 
is not open nor closed~\cite{GilS16}, and hence~\eqref{eq:prob_def} is a difficult optimization problem. 

\section{Reformulation of the nearest stable matrix problem using DH systems} \label{sec:reformcont}

Inspired by the structure of standard DH systems $\dot{x}(t)=(J-R)
Qx(t)$, we define the following class of matrices.  
\begin{definition}[DH matrix] 
A matrix $A \in \mathbb R^{n,n}$ is said to be a dissipative Hamiltonian (DH) matrix if $A=(J-R)Q$ for some $J,R,Q \in \mathbb R^{n,n}$ such that
$J^\top=-J$, $R\succeq 0$ and $Q \succ 0$.
\end{definition}

The results presented in Section~\ref{sec_porthamsys} for DH pairs imply that DH matrices are stable (taking $E=I_n$). 
It turns out the converse is also true, that is, every stable matrix is a DH matrix. 
\begin{theorem}\cite[Lemma 2]{GilS16} \label{th:stableDHmat}
Every stable matrix is a {\em DH} matrix.
\end{theorem}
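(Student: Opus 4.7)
The plan is to leverage the Lyapunov characterization of continuous stability from Theorem~\ref{th:stability} to directly produce the matrix $Q$, and then recover $J$ and $R$ by a symmetric/antisymmetric splitting of $AQ^{-1}$.

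First, since $A \in \mathbb{S}^{n,n}$, Theorem~\ref{th:stability} guarantees the existence of a matrix $X \succ 0$ such that $A^\top X + X A \preceq 0$. I would set $Q := X$, so that $Q \succ 0$ as required in the definition of a DH matrix. Next, since $Q$ is invertible, I can write $A = (A Q^{-1}) Q$ and split the factor $A Q^{-1}$ into its antisymmetric and symmetric parts:
\[
J := \tfrac{1}{2}\bigl( A Q^{-1} - Q^{-1} A^\top \bigr), \qquad
R := -\tfrac{1}{2}\bigl( A Q^{-1} + Q^{-1} A^\top \bigr).
\]
By construction $J^\top = -J$, and clearly $A Q^{-1} = J - R$, so $A = (J-R)Q$.

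The only nontrivial verification is that $R \succeq 0$. Here the key calculation is to congruence-transform the Lyapunov inequality by $Q^{-1}$: since $Q^{-1} \succ 0$ is symmetric and $A^\top Q + Q A \preceq 0$, multiplying on both sides by $Q^{-1}$ yields
\[
Q^{-1}( A^\top Q + Q A ) Q^{-1} \;=\; Q^{-1} A^\top + A Q^{-1} \;\preceq\; 0,
\]
which is exactly the statement that $R = -\tfrac{1}{2}(A Q^{-1} + Q^{-1} A^\top) \succeq 0$. Combined with the previous step, this establishes $A = (J-R)Q$ with $J^\top = -J$, $R \succeq 0$, and $Q \succ 0$, so $A$ is a DH matrix.

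I do not anticipate a substantive obstacle: the Lyapunov theorem supplies $Q$ essentially for free, and the decomposition is forced once $Q$ is fixed. The only point requiring a brief justification is the sign of $R$, which is handled by the two-sided congruence above. One might additionally remark that the DH factorization of a given stable $A$ is highly non-unique, since any admissible choice of $X \succ 0$ in the Lyapunov inequality produces a valid $(J,R,Q)$; this non-uniqueness is in fact the flexibility that the subsequent optimization-based algorithms in the chapter will exploit.
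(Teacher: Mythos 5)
Your proof is correct and follows essentially the same route as the paper: both obtain $Q$ (equivalently $P=Q^{-1}$) from the Lyapunov inequality and define $J$ and $R$ as the antisymmetric and negated symmetric parts of $AQ^{-1}=AP$. The only cosmetic difference is that the paper starts from the inequality in the form $AP+PA^\top\preceq 0$, which makes $R\succeq 0$ immediate, whereas you start from $A^\top X+XA\preceq 0$ and supply the one-line congruence by $Q^{-1}$ to reach the same point.
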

\begin{proof} 
 Let $A$ be stable. By Lyapunov's theorem~\cite{LanT85}, there exists $P \succ 0$ such that
 \begin{equation}\label{eq:lyapunov_cond}
 AP+P A^\top \preceq 0.
 \end{equation}
Let us define
 \begin{equation} \label{eq:lyapunov_form_JRQ}
 J:=\frac{AP - (AP)^\top}{2}, \quad 
 R:=-\frac{AP+(AP)^\top}{2}, 
 \; \text{ and } \; 
 Q:=P^{-1}.
 \end{equation}
 On can check that $A=(J-R)Q$, 
 while $J$ is skew symmetric, 
the inequality~\eqref{eq:lyapunov_cond}
implies that $R \succeq 0$, and $Q = P^{-1} \succ 0$. 
This implies that $A$ is a DH matrix. 
\end{proof} 

We can now reformulate \eqref{eq:prob_def} using DH matrices. 
\begin{theorem} \label{th_dhreform}
Let $A \in \mathbb{R}^{n ,n}$. Then  
$\inf_{S \in \mathbb{S}^{n, n}} {\| A - S \|}_F^2$ is equal to 
\begin{equation} \label{eq_dhreform}
 \inf_{J,R,Q \in \mathbb{R}^{n , n}} {\| A - (J-R) Q \|}_F^2 
 \; \text{ such that } J = -J^\top, R \succeq 0, Q \succeq 0. 
\end{equation} 
\end{theorem}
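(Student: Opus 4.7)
The plan is to prove the equality of the two infima by establishing both inequalities separately. The key observation is that the proof essentially boils down to Theorem~\ref{th:stableDHmat} (every stable matrix is a DH matrix) in one direction, and to the stability of DH matrices (from Theorem~\ref{thm:stable_semidef_R} applied with $E=I_n$) in the other; the only subtle point is that the feasible set on the right allows $Q \succeq 0$ rather than $Q \succ 0$, so a small density/continuity argument is needed.

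For the direction $\inf_{S \in \mathbb{S}^{n,n}} \|A - S\|_F^2 \geq \eqref{eq_dhreform}$, I would take any stable matrix $S \in \mathbb{S}^{n,n}$ and apply Theorem~\ref{th:stableDHmat} to write $S = (J-R)Q$ with $J = -J^\top$, $R \succeq 0$, and $Q \succ 0$. Since $Q \succ 0$ implies $Q \succeq 0$, the triple $(J,R,Q)$ is feasible for~\eqref{eq_dhreform}, so $\|A - S\|_F^2 = \|A - (J-R)Q\|_F^2$ is at least the infimum on the right. Taking the infimum over $S$ gives the desired inequality.

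For the reverse direction $\inf_{S \in \mathbb{S}^{n,n}} \|A - S\|_F^2 \leq \eqref{eq_dhreform}$, I would first observe that when $Q \succ 0$, the pair $(I_n, (J-R)Q)$ is a regular DH matrix pair of index zero, so Theorem~\ref{thm:stable_semidef_R} yields that $(J-R)Q \in \mathbb{S}^{n,n}$. Hence for every feasible $(J,R,Q)$ with $Q \succ 0$, $\|A-(J-R)Q\|_F^2 \geq \inf_{S \in \mathbb{S}^{n,n}} \|A-S\|_F^2$. To extend this to general feasible triples with only $Q \succeq 0$, I would use a perturbation argument: given any feasible $(J,R,Q)$ with $Q \succeq 0$, set $Q_\varepsilon := Q + \varepsilon I_n \succ 0$ for $\varepsilon > 0$, so that $(J,R,Q_\varepsilon)$ is feasible with $Q_\varepsilon$ positive definite, and by continuity $\|A - (J-R)Q_\varepsilon\|_F^2 \to \|A - (J-R)Q\|_F^2$ as $\varepsilon \to 0^+$. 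Since each $(J-R)Q_\varepsilon$ is stable, this shows $\inf_{S \in \mathbb{S}^{n,n}} \|A-S\|_F^2 \leq \|A-(J-R)Q\|_F^2$ for every $Q \succeq 0$, and taking the infimum yields the result.

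The main obstacle, though not a deep one, is precisely this closure step: the natural parametrization gives $Q \succ 0$, whereas the formulation~\eqref{eq_dhreform} uses the closed set $Q \succeq 0$ (which is what algorithms will actually want to optimize over, since $\{Q \succ 0\}$ is open). One must justify that relaxing $Q \succ 0$ to $Q \succeq 0$ does not decrease the infimum, which is exactly what the perturbation $Q \mapsto Q + \varepsilon I_n$ accomplishes. No attainment is claimed, which is good, since in general the infimum on either side need not be attained (the set $\mathbb{S}^{n,n}$ is neither open nor closed, as noted just before the theorem).
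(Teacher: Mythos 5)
Your proof is correct and follows essentially the same route as the paper: one direction via Theorem~\ref{th:stableDHmat}, the other via the stability of DH matrices, with the relaxation from $Q \succ 0$ to $Q \succeq 0$ handled by a closure argument. The paper's own proof merely asserts that this relaxation does not change the infimum; your perturbation $Q \mapsto Q + \varepsilon I_n$ is exactly the justification it leaves implicit.
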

\begin{proof}
This follows directly from Theorem~\ref{th:stableDHmat}. 
Note that the condition $Q \succ 0$ is replaced with $Q \succeq 0$ which does not change the value of the infimum, 
but makes the feasible set of~\eqref{eq_dhreform} closed. 
\end{proof} 

Note that Theorem~\ref{th_dhreform} uses infimums, because the optimal value of these problem might not be attained, since $\mathbb{S}^{n , n}$ is not closed, while the feasible set of~\eqref{eq_dhreform}, that is, the set of DH matrices, $\{ (J,R,Q) \ | \ J = -J^\top, R \succeq 0, Q \succeq 0\}$, is not bounded. 

The advantage of the formulation~\eqref{eq_dhreform} over~\eqref{eq:prob_def} is that its feasible set is convex, and relatively easy to project onto. In fact, the projection onto the set of skew-symmetric matrices, $\bar{S}$, is given by 
\begin{equation} \label{eq:projskew}
\mathcal{P}_{\bar{S}}(Z)  
\; = \; 
\argmin_{J, J^\top = -J} \| J - Z\|_F^2 
\; = \; 
\frac{Z-Z^\top}{2},  
\end{equation} 
while the projection onto the set of PSD matrices requires an eigenvalue decomposition; see Section~\ref{sec:exSDPproc}.

\paragraph{Stable matrix and initialization} If the matrix $A$ is stable, then it can be written as $A = (J-R)Q$ for some
$J=-J^\top$, $R \succeq 0$ and $Q \succ 0$ (Theorem~\ref{th:stableDHmat}), 
and hence $AQ^{-1} = J-R$. 
In that case, we can solve the following
system to recover $(J,R,Q)$: denoting $P = Q^{-1}$,
\[
AP = J-R, \; P \succ 0, \; R \succeq 0, \; J = -J^\top.
\]
This is interesting because it provides a new (convex) way to check whether a matrix is stable. 

If $A$ is not stable so that the above system is infeasible, we can solve 
\begin{equation} \label{nearporthaminv}
\inf_{J=-J^\top,\, R \succeq 0,\,P \succeq I_n} {\| AP - (J-R) \|}_F^2,
\end{equation}
which provides an approximate solution to~\eqref{eq:prob_def} using $(J-R)P^{-1}$ as a stable approximation of $A$. 
The constraint $P \succeq I_n$ allows us to avoid the trivial solution $(J,R,Q) = (0,0,0)$. 
The solution of~\eqref{nearporthaminv} can be used as an initialization for iterative nearest stable matrix algorithms that try to tackle the difficult non-convex problem~\eqref{eq_dhreform}.
 
From the standard stability formulation~\eqref{eq:lyapunov_cond}, it is, as far as we know, not possible to extract a stable approximation from an unstable matrix. This is another advantage of our formulation.

\subsection{Optimization algorithms} 

The non-convex problem~\eqref{eq_dhreform} is hard in general, and there is no closed-form solution, as it is equivalent to the nearest stable matrix problem; see Theorem~\ref{th_dhreform}. 
Hence, it is standard to rely on iterative optimization algorithms, which  
have two key steps: 
\begin{enumerate}

\item Compute an initial solution, $Z^{(0)} = (J^{(0)},R^{(0)},Q^{(0)})$. 

\item From the $k$th iterate, $Z^{(k)}$, 
compute the next iterate, $Z^{(k+1)}$. 

\end{enumerate}

Let us describe a few approaches to tackle these two steps.

\paragraph{Initialization} 

For the initialization, we have already seen that solving the convex problem~\eqref{nearporthaminv} might be a good idea (in particular, it would give an exact solution if the input matrix is stable)--note that this also requires to resort to some iterative algorithms, such as interior-point methods of first-order methods; see Section~\ref{sec:optimization}. 

Another initialization that turns out to work well is to set $Q^{(0)} = I_n$, for which the corresponding optimal $(J^{(0)},R^{(0)})$ can be computed in closed form. 
\begin{lemma} \label{lem:sec_2_lem}
Let $A \in \mathbb R^{n,n}$. 
The optimal solution of 
\[
\min_{J,R \in \mathbb{R}^{n \times n}} 
 {\| A - (J-R) \|}_F^2
\quad \text{ such that }  \quad J=-J^\top \text{ and } R \succeq 0, 
\] 
is given by 
\[
\widehat J = \mathcal P_{\bar S}(A) = \frac{A-A^\top}{2} 
\quad 
\text{ and } 
\quad  
\widehat R = \mathcal P_{\succeq 0}\left(\frac{-A-A^\top}{2}\right), 
\]
where $\mathcal P_{\bar S}(.)$ is the projection on the set of skew-symmetric matrices, defined in~\eqref{eq:projskew}, and $\mathcal P_{\succeq 0}(.)$ is the projection on the set of PSD  matrices,  defined in~\eqref{eq:projPSD}.  
\end{lemma}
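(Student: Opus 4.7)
The plan is to exploit the orthogonal decomposition of $\mathbb{R}^{n\times n}$ into symmetric and skew-symmetric matrices with respect to the Frobenius inner product. Write $A = A_s + A_{ss}$ with $A_s = \tfrac{A+A^\top}{2}$ symmetric and $A_{ss} = \tfrac{A-A^\top}{2}$ skew-symmetric. Since $J$ is skew-symmetric and $R$ is symmetric (being PSD), the matrix $A - (J - R) = A - J + R$ decomposes as
\[
A - (J-R) \;=\; \underbrace{(A_s + R)}_{\text{symmetric}} \;+\; \underbrace{(A_{ss} - J)}_{\text{skew-symmetric}}.
\]
Because symmetric and skew-symmetric matrices are orthogonal with respect to $\langle \cdot, \cdot \rangle_F$, the Pythagorean identity gives
\[
\| A - (J-R) \|_F^2 \;=\; \| A_s + R \|_F^2 \;+\; \| A_{ss} - J \|_F^2.
\]

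First I would note that the two summands depend on disjoint variables ($R$ and $J$ respectively), so the minimization separates into two independent problems. For the skew-symmetric part, minimizing $\|A_{ss} - J\|_F^2$ over $J = -J^\top$ is trivial: since $A_{ss}$ is itself skew-symmetric, the optimum is $\widehat{J} = A_{ss} = \tfrac{A-A^\top}{2}$, attained with objective value zero, consistent with the definition of the projection $\mathcal{P}_{\bar S}$ in~\eqref{eq:projskew}.

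For the symmetric part, rewrite $\|A_s + R\|_F^2 = \|R - (-A_s)\|_F^2$ and minimize over $R \succeq 0$. By definition of the projection onto the PSD cone in~\eqref{eq:projPSD}, the optimizer is
\[
\widehat{R} \;=\; \mathcal{P}_{\succeq 0}(-A_s) \;=\; \mathcal{P}_{\succeq 0}\!\left(\tfrac{-A - A^\top}{2}\right),
\]
which exists and is unique since $-A_s$ is symmetric and the PSD cone is closed and convex. Combining both pieces yields the claimed formulas.

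The proof contains no real obstacle; the only point that deserves care is justifying that the symmetric/skew-symmetric splitting is orthogonal and that it permits separable minimization. Both are immediate: orthogonality follows from $\langle S, K \rangle_F = \tr(S^\top K) = \tr(SK) = -\tr(SK) = 0$ for $S = S^\top$ and $K = -K^\top$ (using cyclicity of trace and $K^\top = -K$), and separability follows because $R$ appears only in the symmetric summand and $J$ only in the skew-symmetric one.
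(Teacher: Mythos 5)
Your proof is correct and rests on the same key fact as the paper's: the orthogonality of the symmetric/skew-symmetric decomposition, which makes the problem separate into a projection onto the skew-symmetric subspace (for $J$) and a projection onto the PSD cone (for $R$). The paper phrases this as a nested minimization (first over $J$, observing the optimal $J$ is independent of $R$, then over $R$), whereas you make the separability explicit via the Pythagorean identity, but the argument is essentially identical.
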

\begin{proof}
Observe that
\begin{align*}
\min_{R\succeq 0, J^\top=-J} & {\| A - (J-R)\|}_F^2  \\
& = \min_{R\succeq 0} \left(\min_{J^\top=-J}{\|(A+R)-J\|}_F^2\right) \\
& = \min_{R\succeq 0}{\left\|(A+R)-\mathcal P_{\bar S}(A+R)\right\|}_F^2 \\ 
& = \min_{R\succeq 0}{\left\|R + \frac{A+A^\top}{2}\right\|}_F^2, \\
\end{align*} 
where the second and third equalities follow from~\eqref{eq:projskew}, that is,  
\[
\underset{J^\top=-J}{\argmin} \|(A+R)-J\|_F^2
= 
\mathcal P_{\bar S}(A+R) 
= \frac{(A+R)-(A+R)^\top}{2} 
= \frac{A-A^\top}{2}  
= \mathcal P_{\bar S}(A).
\]
The closed form for $R$ follows from the projection on the set of PSD matrices~\eqref{eq:projPSD}. 
\end{proof}

Another strategy for initialization is to simply generate $(J,R,Q)$ randomly, and project them onto the feasible set.

\paragraph{Updating iterates}

To update the iterates $(J^{(k)},R^{(k)},Q^{(k)})$, any standard optimization scheme can be used. 
For example, one can use a BCD method, where $J$, $R$ and $Q$ are updated alternatively. In fact, the subproblems in one block of variables is a convex constrained least squares problem. 
These subproblems can be solved via IPM, or gradient descent; the latter being more appropriate for large-scale problems as explained in Section~\ref{sec:optimization}. 
Another approach is to use PGD (Algorithm~\ref{algo:gradient}) or FGM (Algorithm~\ref{algo:fgm}) on all variables simultaneously. 
FGM turns out to perform best among PGM and BCD; see~\cite{GilS16} for numerous numerical experiments.  

Since~\eqref{eq_dhreform} is not Lipschitz smooth with respect to $(J,R,Q)$, there is no clear choice for the step length $\gamma_k$. 
The Lipschitz constant of the gradient of the objective function with respect to $Q$  (for fixed $J,R$) is given by $\lambda_{\max}( (J-R)^\top (J-R) )$
while the Lipschitz constant of the gradient with respect to $(J,R)$ (for fixed $Q$)  is given by $\lambda_{\max}( QQ^\top )$. 
Therefore, it makes sense to scale $(J,R)$ and $Q$ such that  $L = \lambda_{\max}( (J-R)^\top (J-R) ) = \lambda_{\max}( QQ^\top )$ while
choosing an initial steplength $\delta = 1/L$. Note that this allows to remove the scaling degree of freedom since this imposes
 ${\|J-R\|}_2={\|Q\|}_2$. 
In order to avoid computing the maximum eigenvalues of $(J-R)^\top (J-R)$ and $QQ^\top$ from scratch at each step,
we use a few steps of the power method to update the initial value (since $J-R$ and $Q$ do not change too much between two iterations).
We combined this with a backtracking line search: if the objective function has not decreased, the step is divided by a fixed constant larger than one until decrease is achieved. It turns out that in most cases, especially when getting closer to stationary points,
the steplength of $1/L$ allows to decrease the objective function.

\subsection{Numerical example with the AC7 system} \label{sec:numexpAC7stablematrix}

A useful library containing many dynamical systems is the  COnstrained Matrix-optimization Problem library, 
COMPleib~\cite{leibfritz2004compleib}. 
It contains for example the so-called AC7 system 
from~\cite{gangsaas1986application} 
(case study III~2), 
which comes from an  aircraft 
stability and control problem. 
For now, we only consider the matrix $A$ of this system, given by, with three digits of accuracy, 
\small 
\[
A =  
\left( \begin{array}{ccccccccc} 
 -0.063 &  0.019 &  0  &  -0.561 &  -0.028 &  0  &  0.063 &  -0.001 &  0  \\ 
 0.011 &  -0.993 &  0.998 &  0.001 &  -0.071 &  0  &  -0.011 &  0.064 &  0  \\ 
 0.077 &  1.675 &  -1.311 &  0 &  -4.250 &  0  &  -0.077 &  -0.109 &  0  \\ 
 0  &  0  &  1 &  0  &  0  &  0  &  0  &  0  &  0  \\ 
 0  &  0  &  0  &  0  &  -20  &  20  &  0  &  0  &  0  \\ 
 0  &  0  &  0  &  0  &  0  &  -30  &  0  &  0  &  0  \\ 
 0  &  0  &  0  &  0  &  0  &  0  &  -0.882 &  0  &  0  \\ 
 0  &  0  &  0  &  0  &  0  &  0  &  0  &  -0.882 &  0.009 \\ 
 0  &  0  &  0  &  0  &  0  &  0  &  0  &  -0.009 &  -0.882 \\ 
\end{array} \right)
\] 
\normalsize 
whose eigenvalues are given by 
\[
\{  
   0.172 \pm 0.236i, 
  -0.25, 
     -0.882, 
  -0.882 \pm 0.009i, 
  -2.458, 
 -20, 
 -30 
 \},  
\]
with two eigenvalues with a positive real part. 

Using the $(J-R)Q$ representation and FGM initialized with $Q=I_n$ and $(J,R)$ as the optimal solution of the corresponding problem (Lemma~\ref{lem:sec_2_lem}), 
we obtain, in less than a second,  a stable approximation of $A$, namely $(J-R)Q$,  equal to 
\[ 
\left( \begin{array}{ccccccccc} 
 -0.091 &  0.019 &  -0.001 &  -0.561 &  -0.028 &  0&  0.063 &  -0.001 &  0\\ 
 -0.027 &  -1.008 &  0.980 &  0.010 &  -0.071 &  0&  -0.011 &  0.064 &  0\\ 
 0.053 &  1.665 &  -1.324 &  -0.006 &  -4.250 &  0&  -0.077 &  -0.109 &  0\\ 
 0.011 &  0&  1 &  -0.029 &  0 &  0 &  0 &  0&  0 \\ 
 0.005 &  0.002 &  0.003 &  0.001 &  -20 &  20 &  0 &  0 &  0 \\ 
 0.004 &  0.001 &  0.002 &  0.001 &  0 &  -30 &  0 &  0 &  0 \\ 
 0&  0.001 &  0.001 &  -0.001 &  0 &  0 &  -0.882 &  0&  0 \\ 
 0.001 &  0 &  0 &  0.001 &  0 &  0&  0&  -0.882 &  0.009 \\ 
 0 &  0 &  0 &  0 &  0 &  0 &  0&  -0.009 &  -0.882 \\ 
\end{array} \right) 
\] 
with relative error $\frac{\| A - (J-R) Q\|_F}{\|A\|_F} = 0.168\%$, and 
 \[
\Lambda((J-R)Q) = \{  
0, 
\pm 0.037i, 
    -0.882,  
  -0.882 \pm 0.009i,  
      -2.453,
 -20,  -30.001 
 \}.  
\]
Figure~\ref{AC7_nearestJRQ} illustrates this approximation. 
\begin{figure}[ht!]
\begin{center}
\includegraphics[width=0.7\textwidth]{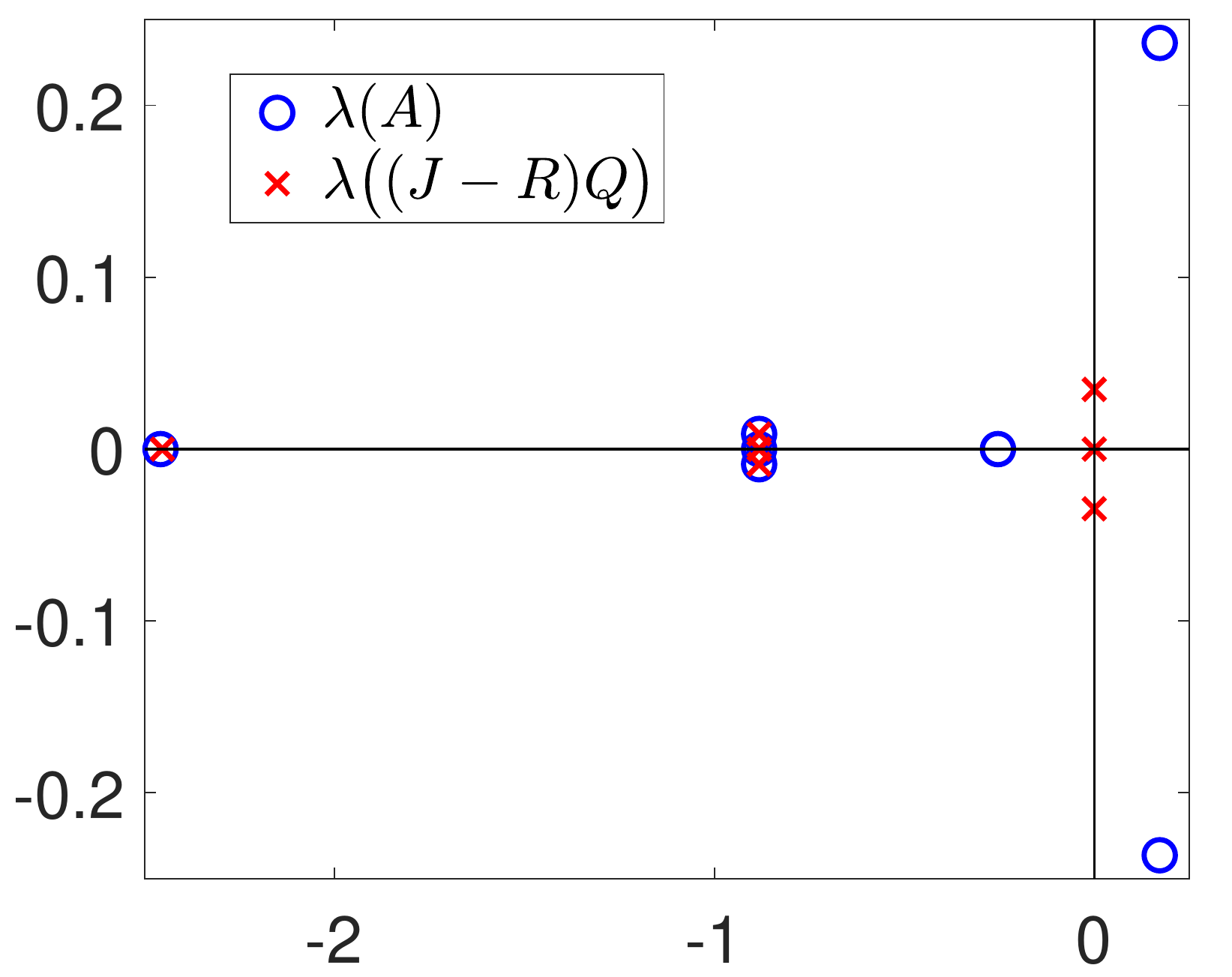}
\caption{Eigenvalues of $A$, and its stable approximation $(J-R)Q$ computed by FGM. Note that the eigenvalues -20 and -30 of $A$ are not displayed, but $(J-R)Q$ has the same eigenvalues.   \label{AC7_nearestJRQ}}
\end{center}
\end{figure}

The approximation of $A$ is not asymptotically stable as it has three eigenvalues on the imaginary axis. 
This is expected since the open left half of the complex plane is an open set. 

If one wishes to obtain an asymptotically stable matrix, there are (at least) two possibilities: 
\begin{enumerate}

\item One can change the feasible set $Q \succeq 0$ and 
$R \succeq 0$  to $Q \succeq \delta I$ and $R \succeq \delta I$ for some parameter $\delta > 0$. 
For example, using $\delta = 10^{-3}$, we obtain an approximation $(J-R)Q$ with relative error of 1.44\% with eigenvalues 
\[
\{ 
-0.15 \pm 0.43i, 
  -0.31, 
  -0.88,  
  -0.88 \pm 0.01i, 
  -2.46, 
 -20, 
 -30 
 \}. 
\]

\item One can apply the same algorithm on the matrix $A+\epsilon I$ for some parameter $\epsilon > 0$. 
This will give $A + \epsilon I \approx (J-R)Q$, and hence 
$A \approx  (J-R)Q - \epsilon I$ where the real part of the eigenvalues of 
$(J-R)Q - \epsilon I$ are guaranteed to be smaller than $-\epsilon$, since the eigenvalues the real part of the eigenvalues of 
$(J-R)Q$ are nonpositive.  
For example, using $\epsilon = 10^{-3}$, we obtain an approximation $(J-R)Q$ with relative error of 0.1705\% (while the case $\epsilon=0$  gives 0.168\%) with eigenvalues 
\[
\{ 
-0.001, 
  -0.001 \pm 0.034i, 
  -0.88,  
  -0.88 \pm 0.01i, 
  -2.45, 
 -20, 
 -30 
 \} . 
\] 
This second approach allows to control directly the maximum real part of the approximation of~$A$. 

\end{enumerate}

Another possibility would be to impose additional constraints on $(J,R,Q)$ to ensure that the maximum real part of the eigenvalues of $(J-R)Q$ is smaller than some given constant; this is discussed in the next section.

 \section{Generalization to $\Omega$-stability} \label{sec:omegastab} 

At first sight, the strategy proposed in Section~\ref{sec:reformcont} to reformulate the nearest stable matrix is only useful for continuous-time systems; see Theorem~\ref{th_dhreform}. 
However, using appropriate constraints on $J$, $R$ and $Q$, it is possible constraint the eigenvalues of $(J-R)Q$ to belong to other subsets of the complex plane.   
In fact, it is possible to represent three types of sets via additional convex constraints on $J$, $R$ and $Q$, namely: 
\begin{itemize}
\item Conic sector: the conic sector region of parameters $a,\,\theta \in \R$ with $0 < \theta < \pi/2$, denoted by $\Omega_C(a,\theta)$, is defined as
\[
\Omega_C(a,\theta)
:=
\left\{ x+iy \in \C \ \big| \ \sin(\theta) (x-a) < \cos(\theta) y < -\sin(\theta) (x-a),\,
x < a \right\}.
\]

\item Vertical strip: the vertical strip region of parameters $ h < k$, denoted by $\Omega_V(h,k)$, is defined as
\[
\Omega_V(h,k)
:= \left\{ x+iy \in \C \ \big| \ -k < x < -h \right\}.
\]
Note that $h$ (resp.\ $k$) can possibly be equal to $-\infty$ (resp.\ $+\infty$)  in which case $\Omega_V$ is a half space.
In particular, $\Omega_V(0,+\infty)$ is the open left half of the complex plane, corresponding to stable matrices for continuous LTI systems.

\item Disks centred on the real line:
the disk centred  at $(-q,0)$ with radius $r>0$, denoted by $\Omega_D(-q,r)$, is defined as
\[
\Omega_D(-q,r)
:= \left\{ z\in \C \ \big| \ |z+q|<r\right\}.
\]
In particular, $\Omega_D(0,1)$ is the unit disk, corresponding to stable matrices for discrete LTI systems.

\end{itemize}

For a given region $\Omega \subseteq \C$, the matrix $A\in \R^{n,n}$ is called $\Omega$-stable if all its eigenvalues lie inside the region $\Omega$. We consider the following analogue of~\eqref{eq:prob_def}, and called it as \emph{nearest $\Omega$-stable matrix problem}: 

\begin{equation}\label{eq:prob_def_omega}
\inf_{X\in \mathbb S_{\Omega}^{n,n}} {\|A-X\|}_F^2,
\end{equation}
where $\mathbb S_{\Omega}^{n,n}$ is the set of
all $\Omega$-stable matrices of size $n\times n$. 

We consider $\Omega$ as either any of $\Omega_C$, $\Omega_V$, $\Omega_D$, or the intersection of such sets; see Figure~\ref{fig1} for an illustration. Note that $\Omega$ is symmetric with respect to the real line. 
\begin{figure}
\begin{center}
\includegraphics[width=5cm]{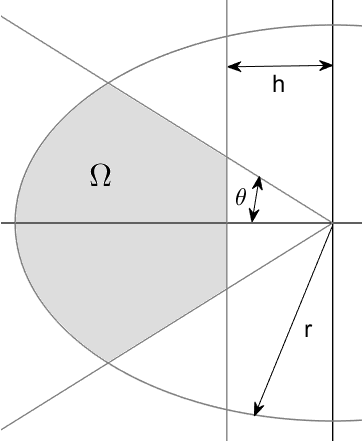}
\caption{Illustration of 
$\Omega =
\Omega_C(0,\theta)
\cap \Omega_V(h,+\infty)
\cap \Omega_D(0,r)$.
\label{fig1}}
\end{center}
\end{figure} 

\subsection{Generalizing Lyapunov LMI to $\Omega$ stability}

In~\cite{choudhary2020approximating}, we relied on the results of Chilali and Gahinet~\cite{ChilG96} to constrain $J$, $R$ and $Q$ such that the eigenvalues of $(J-R)Q$ belong to sets $\Omega$ as described in the previous section.   Let us recall this result.  

 A subset $\Omega$ of $\mathbb{C}$ is called an LMI region if there exist a real symmetric matrix $B$ and  a real matrix $C$ such that 
 \[
 \Omega \, = \, \left\{ z \in \mathbb{C} \ | \ 
 f_\Omega(z) := B + zC + \bar{z} C^\top  \prec 0 \right\}. 
 \] 
Note that such sets are symmetric with respect to the real line, since 
 $f_\Omega(\bar z) = \overline{f_\Omega(z)}$. 
 Chilali and Gahinet~\cite[Theorem 2.2]{ChilG96} showed that $A$ is $\Omega$-stable if and only if there exists $P \succ 0$ such that 
 \[
 B \otimes P + X \otimes (AP) + C^\top \otimes (AP)^\top \prec 0,  
 \]
 where $\otimes$ denotes the Kronecker product. 
 
 Let us illustrate this result when $\Omega$ is a circle centred at $(-q,0)$ of radius $r > 0$, namely $\Omega = \Omega_D(-q,r)$. We have 
 \[
 f_\Omega(z) = 
 \mat{cc} 
 -r & z+q \\ 
 \overline{z+q} & -r 
 \rix 
 = 
  \underbrace{\mat{cc} 
 -r & q \\ 
  q & -r 
 \rix}_{=B}
 + 
 z \underbrace{\mat{cc} 
 0 & 1 \\ 
 0 & 0
 \rix}_{=C} 
 + 
 \bar z \underbrace{\mat{cc} 
 0 & 1  \\ 
 0 & 0 
 \rix^\top}_{=C^\top} . 
 \]
 In fact, $f_\Omega(z) \prec 0$ if and only if the trace of $f_\Omega(z)$ is negative (the sum of the eigenvalues is negative) and the determinant is negative (the product of the eigenvalues is positive). 
 The trace is always negative since $r < 0$, while the determinant is given by $r^2 - |z+q|^2 < 0$, which gives the result. 
 Then, to obtain a Lyapunov-like LMI for this set, it suffices to use the above result: a matrix $A$ is $\Omega_D(-q,r)$-stable if there exists $P \succ 0$ such that 
 \[
   \mat{cc} 
 -r P & q \\ 
  q & -r P  
 \rix 
 + 
   \mat{cc} 
 0 &  AP \\ 
0 & 0 
 \rix 
  + 
   \mat{cc} 
 0 & 0 \\ 
 (AP)^\top & 0 
 \rix 
 \prec 0. 
 \]
 Interestingly, for the particular case of discrete stability, with $q = 0$ and $r = 1$, we obtain the standard Lyapunov LMI: 
 \begin{align*}
   \mat{cc} 
 - P & AP \\ 
 (AP)^\top & - P  
 \rix 
\prec 0 & \quad 
\iff 
\qquad 
\mat{cc} 
 P & -AP \\ 
 -(AP)^\top &  P  \rix 
\succ 0 \\ 
& \quad 
\iff 
\qquad 
P \succ 0 \text{ and }  P - (AP) P^{-1} (AP)^\top = P - APA^\top \succ 0, 
 \end{align*}
 where the second equivalence follows from the Schur complement. \\

 Before we provide the constraints on $(J,R,Q)$ to have the eigenvalues of $(J-R)Q$ belong to various LMI regions, 
 let us provide a useful lemma. 
 
 \begin{lemma}\cite[Lemma 1]{choudhary2020approximating}\label{lem:dhevalues}
Let $A=(J-R)Q$, where $J,R,Q \in \R^{n,n}$ is such that $J^\top=-J$, $R^\top=R$, and $Q^\top=Q$ is invertible.
Let $\lambda \in \C$, and $v \in \C^{n}\setminus \{0\}$  be such that
$v^*A=\lambda v^*$. Then
\[
\real{(\lambda)}= -\frac{v^*Rv}{v^*Q^{-1}v} \quad \text{and}\quad \imag{(\lambda)}= -i\frac{v^*Jv}{v^*Q^{-1}v}.
\]
\end{lemma}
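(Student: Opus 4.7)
The plan is to derive a scalar identity for $\lambda$ by multiplying the left-eigenvector equation by a suitable vector, then extract real and imaginary parts using the symmetry/skew-symmetry of $R$ and $J$. This is a short argument; the only subtle point is justifying that the denominator $v^{*}Q^{-1}v$ does not vanish.

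First I would start from the hypothesis $v^{*}A = \lambda v^{*}$, i.e.\ $v^{*}(J-R)Q = \lambda v^{*}$, and right-multiply by $Q^{-1}v$ to obtain the scalar relation
\begin{equation*}
v^{*}(J-R)v \;=\; \lambda\, v^{*}Q^{-1}v .
\end{equation*}
Assuming $v^{*}Q^{-1}v\neq 0$, this gives a closed-form expression $\lambda = (v^{*}Jv - v^{*}Rv)/(v^{*}Q^{-1}v)$ from which both statements of the lemma will follow.

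Next I would analyse the three quadratic forms appearing here. Since $R \in \mathbb{R}^{n,n}$ is symmetric, the Hermitian form $v^{*}Rv$ is real; the same argument, applied to the real symmetric invertible matrix $Q$ (and hence to $Q^{-1}$), shows that $v^{*}Q^{-1}v\in\mathbb{R}$. On the other hand, because $J^{\top}=-J$, we have $(v^{*}Jv)^{*} = v^{*}J^{\top}v = -v^{*}Jv$, so $v^{*}Jv$ is purely imaginary. Writing $v^{*}Jv = i\alpha$ for some $\alpha \in \mathbb{R}$, the identity above splits cleanly into
\begin{equation*}
\real(\lambda) \;=\; -\frac{v^{*}Rv}{v^{*}Q^{-1}v},
\qquad
\imag(\lambda) \;=\; \frac{\alpha}{v^{*}Q^{-1}v} \;=\; -i\,\frac{v^{*}Jv}{v^{*}Q^{-1}v},
\end{equation*}
which is exactly the claimed formulas (the factor $-i$ in the imaginary part identity is simply the conversion $i\alpha \mapsto \alpha$).

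The one step requiring a little care is the non-vanishing of $v^{*}Q^{-1}v$; this is the main (mild) obstacle. In the application of the lemma to DH systems $Q$ is positive definite, in which case $Q^{-1}\succ 0$ and $v^{*}Q^{-1}v>0$ for $v\neq 0$ automatically. In the more general setting merely assuming $Q$ symmetric invertible, one can instead argue by contradiction: if $v^{*}Q^{-1}v=0$, then the scalar identity forces $v^{*}(J-R)v = 0$, and taking real parts gives $v^{*}Rv=0$; this leads to inconsistencies with the assumption that $v$ is a genuine left eigenvector (in particular one checks $Rv = 0$ when $R\succeq 0$, and the argument reduces to the setting considered in the preceding regularity lemmas). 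Either way, the denominators are justified and the two identities follow directly from the split into real and imaginary parts.
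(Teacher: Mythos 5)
Your argument is correct and follows essentially the same route as the paper: both multiply the left-eigenvector identity by $Q^{-1}v$ to get $v^{*}(J-R)v=\lambda\,v^{*}Q^{-1}v$ and then exploit that $v^{*}Rv$ and $v^{*}Q^{-1}v$ are real while $v^{*}Jv$ is purely imaginary (the paper does this by adding/subtracting the conjugated equation, which is the same computation as your real/imaginary split). Your remark about $v^{*}Q^{-1}v\neq 0$ is a fair point that the paper's proof silently skips, and your resolution (automatic when $Q\succ 0$, as in every application of the lemma) is the right one.
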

\begin{proof}
 Let $v$ be a left eigenvector of $A$ corresponding to
 eigenvalue $\lambda$, i.e., $v^*A=\lambda v^*$. Then
\begin{eqnarray}\label{eq1}
v^*(J-R)Q=\lambda v^* \Longrightarrow v^*(J-R)v=\lambda v^*Q^{-1}v, 
\end{eqnarray}
and by taking the conjugate of~\eqref{eq1}, we get
\begin{eqnarray}\label{eq2}
 v^*(-J-R)v=\bar \lambda v^*Q^{-1}v.
\end{eqnarray}
Substracting~\eqref{eq2} from~\eqref{eq1}, we obtain 
\begin{eqnarray*}\label{eq3}
2v^*Jv &=&(\lambda -\bar \lambda)v^*Q^{-1}v =
2 i \imag(\lambda) v^*Q^{-1}v \Longrightarrow v^*Jv = i \imag( \lambda) v^*Q^{-1}v,
\end{eqnarray*}
and, summing~\eqref{eq2} with~\eqref{eq1}, we obtain 
\begin{eqnarray*}\label{eq4}
-2v^*Rv &=&(\lambda +\bar \lambda)v^*Q^{-1}v =
2 \real(\lambda) v^*Q^{-1}v \Longrightarrow v^*Rv = -\real( \lambda) v^*Q^{-1}v.
\end{eqnarray*}
\end{proof}

\subsection{Conic sectors, $\Omega_C$}  \label{paramC}

Consider the region $\Omega_C(a,\theta)$ with parameters
$a\in \R$ and $0 < \theta < \pi/2$ and
let $\alpha := \sin(\theta)$ and $\beta := \cos(\theta)$.
To parametrize $\Omega_C(a,\theta)$ in terms of DH matrices, let us first prove the following lemma.
\begin{lemma}\cite[Lemma 2]{choudhary2020approximating} \label{lem1}
Let $\lambda =\lambda_1+i\lambda_2$, where $\lambda_1$, $\lambda_2 \in \R$. Then  $\mat{cc} \alpha \,(\lambda_1-a) & \beta \,i\lambda_2 \\ -\beta\, i\lambda_2 & \alpha \,(\lambda_1-a) \rix \prec 0$
if and only if $\lambda \in \Omega_C(a,\theta)$.
\end{lemma}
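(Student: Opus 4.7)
The plan is to treat $M := \bigl[\begin{smallmatrix} \alpha(\lambda_1-a) & \beta i \lambda_2 \\ -\beta i \lambda_2 & \alpha(\lambda_1-a) \end{smallmatrix}\bigr]$ as a $2\times 2$ Hermitian matrix (it is Hermitian because the diagonal is real and the $(2,1)$ entry equals the complex conjugate of the $(1,2)$ entry), and use the elementary fact that a Hermitian $2\times 2$ matrix is negative definite if and only if its trace is strictly negative and its determinant is strictly positive. This reduces the equivalence to a clean scalar computation.

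First I would compute the trace: $\tr(M) = 2\alpha(\lambda_1-a)$, so since $\alpha = \sin\theta > 0$ (because $0 < \theta < \pi/2$), the condition $\tr(M) < 0$ is equivalent to $\lambda_1 < a$. Next I would compute the determinant. Noting that $(\beta i \lambda_2)(-\beta i \lambda_2) = -\beta^2 i^2 \lambda_2^2 = \beta^2\lambda_2^2$, we get
\[
\det(M) \;=\; \alpha^2(\lambda_1-a)^2 - \beta^2 \lambda_2^2.
\]
Hence $\det(M) > 0$ iff $\alpha^2(\lambda_1-a)^2 > \beta^2\lambda_2^2$, which, using $\alpha,\beta>0$, is equivalent to $|\alpha(\lambda_1-a)| > \beta|\lambda_2|$.

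Finally I would combine the two conditions. Since $\lambda_1 < a$ gives $\alpha(\lambda_1-a) < 0$, we have $|\alpha(\lambda_1-a)| = -\alpha(\lambda_1-a)$, so the determinant condition becomes $-\alpha(\lambda_1-a) > \beta|\lambda_2|$, i.e., both $\beta\lambda_2 < -\alpha(\lambda_1-a)$ and $-\beta\lambda_2 < -\alpha(\lambda_1-a)$. Rewriting, this is precisely
\[
\sin(\theta)(\lambda_1-a) \;<\; \cos(\theta)\lambda_2 \;<\; -\sin(\theta)(\lambda_1-a),
\]
together with $\lambda_1 < a$, which is exactly the defining condition of $\Omega_C(a,\theta)$. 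There is no real obstacle here: the only subtlety to be careful about is the sign when taking $(\beta i \lambda_2)(-\beta i \lambda_2)$, and the use of $\alpha,\beta>0$ when turning the absolute-value inequality into the two-sided strict inequality that appears in the definition of the conic sector.
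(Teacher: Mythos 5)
Your proof is correct and takes essentially the same approach as the paper: both arguments exploit that the matrix is Hermitian and reduce negative definiteness to elementary scalar inequalities in $\lambda_1$ and $\lambda_2$. The only (immaterial) difference is that the paper computes the two eigenvalues explicitly as $\alpha(\lambda_1-a)\pm\beta\lambda_2$, which yields the two-sided inequality directly, whereas you use the trace--determinant criterion and then unfold the absolute value; both routes are valid and equally elementary.
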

\begin{proof} The proof follows using the fact that $\mat{cc} \alpha \,(\lambda_1-a) & \beta \,i\lambda_2 \\ -\beta\, i\lambda_2 & \alpha \,(\lambda_1-a) \rix$
is Hermitian and therefore it is negative definite if and only if both eigenvalues $\mu_1=\alpha (\lambda_1-a) +\beta \lambda_2$
and $\mu_2=\alpha (\lambda_1-a) -\beta \lambda_2$ are negative which is true if and only if $\alpha (\lambda_1-a) < \beta \lambda_2 < -\alpha (\lambda_1-a)$, i.e.,
$\lambda \in \Omega_C(a,\theta)$.
\end{proof}

\begin{theorem}\cite[Theorem 1]{choudhary2020approximating} \label{mainthm}
Let $A\in \R^{n,n}$. Then $A$ is $\Omega_C(a,\theta)$-stable if and only if $A=(J-R)Q$ for some $J,R,Q \in R^{n,n}$ such that
$J^\top=-J$, $R^\top=R$,  $Q$ is symmetric positive definite, and
\begin{equation}\label{eq:tem1}
\mat{cc}\alpha  (R+aQ^{-1}) & -\beta J \\
\beta   J & \alpha (R+aQ^{-1})
\rix
\succ 0.
\end{equation}
\end{theorem}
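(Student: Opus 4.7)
The plan is to prove both directions by leveraging the two preceding lemmas: Lemma~\ref{lem1} gives a $2\times 2$ matrix-inequality characterization of when a complex number $\lambda=\lambda_1+i\lambda_2$ lies in $\Omega_C(a,\theta)$, while Lemma~\ref{lem:dhevalues} expresses $\real(\lambda)$ and $i\imag(\lambda)$ as ratios of the quadratic forms $v^*Rv$, $v^*Jv$ and $v^*Q^{-1}v$ evaluated on a left eigenvector $v$ of $(J-R)Q$. The bridge between these is to test the $2n\times 2n$ inequality~\eqref{eq:tem1} against vectors of the form $\begin{pmatrix} v \\ w v \end{pmatrix}$ for a scalar $w$, which compresses~\eqref{eq:tem1} to a $2\times 2$ matrix that matches, after normalization, exactly the inequality in Lemma~\ref{lem1}.

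For the ``if'' direction, I would take any eigenpair $v^* A = \lambda v^*$ of $A=(J-R)Q$ (which exists over $\mathbb C$) and multiply~\eqref{eq:tem1} on the right by $\bigl(\begin{smallmatrix} v & 0 \\ 0 & v \end{smallmatrix}\bigr)$ and on the left by its conjugate transpose to obtain
\[
\begin{pmatrix} \alpha\, v^*(R+aQ^{-1})v & -\beta\, v^*Jv \\ \beta\, v^*Jv & \alpha\, v^*(R+aQ^{-1})v \end{pmatrix} \succ 0.
\]
Dividing by the positive scalar $v^*Q^{-1}v$ and substituting $v^*Rv/(v^*Q^{-1}v)=-\lambda_1$ and $v^*Jv/(v^*Q^{-1}v)=i\lambda_2$ from Lemma~\ref{lem:dhevalues} turns this into $\bigl(\begin{smallmatrix} -\alpha(\lambda_1-a) & -\beta i\lambda_2 \\ \beta i\lambda_2 & -\alpha(\lambda_1-a) \end{smallmatrix}\bigr)\succ 0$, i.e.\ the negation of the matrix in Lemma~\ref{lem1}, giving $\lambda\in\Omega_C(a,\theta)$.

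For the ``only if'' direction, I would invoke the Chilali--Gahinet theorem quoted earlier in the section. Writing $\Omega_C(a,\theta)$ in the LMI-region form $f_\Omega(z)=B+zC+\bar z C^\top\prec 0$ with
\[
B=-a\alpha I_2, \qquad C=\tfrac{1}{2}\begin{pmatrix} \alpha & \beta \\ -\beta & \alpha \end{pmatrix},
\]
the theorem guarantees some $P\succ 0$ with $B\otimes P+C\otimes(AP)+C^\top\otimes(AP)^\top\prec 0$. Set $Q=P^{-1}$, $J=\tfrac{1}{2}(AP-PA^\top)$, $R=-\tfrac{1}{2}(AP+PA^\top)$. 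Then $A=(J-R)Q$ with $J^\top=-J$, $R^\top=R$ and $Q\succ 0$, and a block-by-block expansion of the Kronecker-product LMI produces exactly $-$\eqref{eq:tem1}, i.e.\ \eqref{eq:tem1} holds.

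The main technical obstacle is the ``only if'' bookkeeping: one has to correctly identify $B$ and $C$ for the conic sector (which is not one of the standard worked examples earlier in the chapter) and then carry out the $2\times 2$ block expansion of $B\otimes P+C\otimes(AP)+C^\top\otimes(AP)^\top$ and recognize its four blocks as $\pm\alpha(R+aQ^{-1})$ and $\pm\beta J$. The ``if'' direction is essentially algebraic once the right test vectors are chosen, and it is the sign/conjugation conventions between $v^*Jv$ being purely imaginary and $i\lambda_2$ that require care.
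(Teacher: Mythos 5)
Your proposal is correct and follows essentially the same route as the paper's proof: the ``if'' direction compresses~\eqref{eq:tem1} with $\operatorname{diag}(v,v)$ for a left eigenvector $v$ and combines Lemma~\ref{lem:dhevalues} with Lemma~\ref{lem1}, and the ``only if'' direction invokes Chilali--Gahinet to get $X\succ 0$ and builds $(J,R,Q)$ exactly as in~\eqref{eq:JRQ}. The only cosmetic difference is that you re-derive the conic-sector LMI from the general Kronecker-product form $B\otimes P+C\otimes(AP)+C^\top\otimes(AP)^\top\prec 0$ (with the correct $B=-a\alpha I_2$ and $C=\tfrac12\bigl[\begin{smallmatrix}\alpha&\beta\\-\beta&\alpha\end{smallmatrix}\bigr]$), whereas the paper quotes the already-instantiated inequality~\eqref{eqt} directly from~\cite[Theorem~2.2]{ChilG96}.
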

\begin{proof}
First suppose that $A=(J-R)Q$ for some $J,R,Q$ satisfying $Q\succ 0$ and~\eqref{eq:tem1}.
Let $\lambda =\lambda_1 +i \lambda_2$ be an eigenvalue of $A$ and let $v \in \C^n \setminus \{0\}$ be a left eigenvector of $A$ corresponding to eigenvalue~$\lambda$.
Since 
\[ 
\mat{cc}\alpha\, (R+aQ^{-1}) & -\beta\,J \\\beta\, J & \alpha\, (R+aQ^{-1})\rix \succ 0 
\] 
and $v \neq 0$, we have that
\begin{eqnarray}\label{eq51}
-2\mat{cc} v^* & 0 \\ 0 & v^*\rix
\mat{cc}\alpha\, (R+aQ^{-1}) & -\beta\,J \\\beta\, J & \alpha\, (R+aQ^{-1})\rix
\mat{cc} v & 0 \\ 0 & v\rix &\prec& 0 \nonumber\\
\Longrightarrow\quad  2\mat{cc} -\alpha v^*(R+aQ^{-1})v & \beta v^*Jv \\-\beta v^*Jv & -\alpha v^*(R+aQ^{-1})v \rix &\prec& 0 \nonumber\\
\Longrightarrow\quad
\mat{cc} -\alpha v^*Rv  & \beta v^*Jv \\-\beta v^*Jv & -\alpha v^*Rv \rix -
\alpha \mat{cc}av^*Q^{-1}v & 0 \\ 0 & av^*Q^{-1}v \rix&\prec& 0.
\end{eqnarray}
Thus by using Lemma~\ref{lem:dhevalues} in~\eqref{eq51}, we obtain
\begin{eqnarray}
v^*Q^{-1}v\mat{cc} \alpha \,(\lambda_1-a) & \beta\,i\lambda_2 \\-\beta\,i \lambda_2 & \alpha \,(\lambda_1-a) \rix \prec 0.
\end{eqnarray}
This implies that $\mat{cc} \alpha \,(\lambda_1-a) & \beta\,i\lambda_2 \\-\beta\,i \lambda_2 & \alpha \,(\lambda_1-a) \rix \prec 0$ since $Q$ is positive definite.
Thus Lemma~\ref{lem1} implies that $A$ is $\Omega_C(a,\theta)$-stable.

For the `only if' part, since $A$ is $\Omega_C(a,\theta)$-stable, by~\cite[Theorem~2.2]{ChilG96}, there exists $X \succ 0$ such that
\begin{equation}\label{eqt}
\mat{cc} \alpha (AX+XA^\top -2aX) & \beta (AX-XA^\top) \\
\beta (XA^\top-AX) & \alpha (AX+XA^\top-2aX)
\rix \prec 0.
\end{equation}
Let
\begin{equation}\label{eq:JRQ}
R=-\frac{AX+(AX)^\top}{2},\quad J=\frac{AX-(AX)^\top}{2},\quad \text{and}\quad Q=X^{-1}.
\end{equation}
Then $(J-R)Q=A$ and it follows from~\eqref{eqt} that
\[
\mat{cc}\alpha\, (R+aQ^{-1}) & -\beta\,J \\\beta\, J & \alpha\, (R+aQ^{-1})\rix =- \frac{1}{2} \mat{cc} \alpha (AX+XA^\top-2aX) & \beta (AX-XA^\top) \\
\beta (XA^\top-AX) & \alpha (AX+XA^\top-2aX) \rix
\]
is positive definite.
\end{proof}

As a consequence of~\eqref{eq:tem1} in Theorem~\ref{mainthm}, the matrix $J$ is skew-symmetric. However, the matrix $R$ may not be positive definite (when $a >0$)
and therefore the $\Omega_C(a,\theta)$-stable matrix $A$ need not be a DH matrix. But
when $a \leq 0$, then~\eqref{eq:tem1} implies that $R+aQ^{-1} \succ 0$, or equivalently, $R \succ -aQ^{-1}$ since $a \leq 0$ and $Q\succ 0$. As a result
$R$ is positive semidefinite. Therefore in this case $A$ is $\Omega_C(a,\theta)$-stable if and only if $A$ is a DH matrix satisfying~\eqref{eq:tem1}.

\subsection{Vertical strips, $\Omega_V$} \label{paramV}

We can characterize $\Omega_V$-stability as follows.

\begin{theorem}\cite[Theorem 2]{choudhary2020approximating} \label{thm:vert}
Let $A \in \R^{n,n}$ and $ h < k$. Then $A$ is $\Omega_V(h,k)$-stable if and only if $A=(J-R)Q$
for some $J,R,Q \in \R^{n,n}$ such that $J^\top=-J$, $R^\top=R$,
$Q$ is symmetric positive definite, and
\begin{equation} \label{condVert}
k Q^{-1} \succ R \succ h Q^{-1}.
\end{equation}
\end{theorem}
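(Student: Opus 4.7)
The plan is to mirror the strategy used for the conic sector case (Theorem~\ref{mainthm}), exploiting the fact that a vertical strip is simply the intersection of two half-planes, each described by one linear inequality on $\real(z)$. The forward direction will read off the real part of each eigenvalue from Lemma~\ref{lem:dhevalues}; the converse will invoke the Chilali-Gahinet LMI characterization and then use exactly the same $(J,R,Q)$ construction that appears in~\eqref{eq:JRQ}.

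For the ``if'' direction, I would take $A=(J-R)Q$ satisfying the stated hypotheses and let $\lambda = \lambda_1 + i\lambda_2$ be any eigenvalue of $A$ with left eigenvector $v \neq 0$. By Lemma~\ref{lem:dhevalues}, $\lambda_1 = -\,v^*Rv / v^*Q^{-1}v$, where the denominator is strictly positive since $Q \succ 0$. The assumption $R \prec kQ^{-1}$ then gives $v^*Rv < k\,v^*Q^{-1}v$, hence $\lambda_1 > -k$, while $R \succ hQ^{-1}$ gives $\lambda_1 < -h$. Therefore $\lambda \in \Omega_V(h,k)$.

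For the ``only if'' direction, I would first write $\Omega_V(h,k)$ as an LMI region via
\[
B = \begin{pmatrix} h & 0 \\ 0 & -k \end{pmatrix}, \qquad C = \begin{pmatrix} 1/2 & 0 \\ 0 & -1/2 \end{pmatrix},
\]
so that $f_\Omega(z) = B + zC + \bar z C^\top \prec 0$ is exactly the pair $\real(z) < -h$ and $\real(z) > -k$. Applying~\cite[Theorem~2.2]{ChilG96} yields a $P \succ 0$ such that $B \otimes P + C \otimes (AP) + C^\top \otimes (AP)^\top \prec 0$; because $B$ and $C$ are diagonal, this block-diagonal condition decouples into
\[
hP + \tfrac{1}{2}\bigl(AP + (AP)^\top\bigr) \prec 0 \quad \text{and} \quad kP + \tfrac{1}{2}\bigl(AP + (AP)^\top\bigr) \succ 0.
\]
Setting $R = -\tfrac{1}{2}(AP + (AP)^\top)$, $J = \tfrac{1}{2}(AP - (AP)^\top)$, and $Q = P^{-1}$ as in~\eqref{eq:JRQ} gives $(J-R)Q = A$ with $J^\top = -J$, $R = R^\top$, $Q \succ 0$, and the two LMIs above become precisely $hQ^{-1} \prec R \prec kQ^{-1}$.

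I do not anticipate a serious obstacle: the argument is essentially the same bookkeeping as in Theorem~\ref{mainthm}, only with a simpler (diagonal) LMI-region matrix $B$, so no scalar weights $\alpha,\beta$ intervene. The only delicate point is keeping sign conventions straight when passing from the Chilali-Gahinet inequality to the two-sided bound on $R$, but once the diagonal decoupling is written down the conclusion follows without any new idea.
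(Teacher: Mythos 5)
Your proof is correct and follows essentially the same route as the paper: the forward direction reads off $\real(\lambda)$ via the DH structure (the paper uses a right eigenvector and a direct computation where you invoke Lemma~\ref{lem:dhevalues} with a left eigenvector, which is equivalent), and the converse invokes the Chilali--Gahinet characterization and the construction~\eqref{eq:JRQ} exactly as the paper does, merely spelling out the diagonal LMI-region matrices that the paper quotes in already-decoupled form.
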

\begin{proof} First suppose that $A=(J-R)Q$, where $J^\top=-J$, $R^\top=R$, $Q\succ 0$ such that  $k Q^{-1} \succ R\succ h Q^{-1}$.
Let $\lambda$ be an eigenvalue of $A$ and $x \in \C^{n}\setminus \{0\}$
be such that
$Ax=\lambda x$ or $(J-R)Qx=\lambda x$. Since $Q$ is invertible, this implies that
\begin{equation}\label{eq:1}
x^*Q(J-R)Qx=\lambda x^*Q x \quad \Longrightarrow \quad \real{(\lambda)}=-\frac{x^*QRQx}{x^*Qx}.
\end{equation}
Since $x^*Qx > 0$ as $Q\succ 0$ and $R$ satisfies $k Q^{-1} \succ R\succ h Q^{-1}$, we have
$k x^*Qx > x^*QRQx > h x^*Qx$. This implies that
\begin{equation}\label{eq:2}
k  > \frac{x^*QRQx}{x^*Qx} > h.
\end{equation}
From~\eqref{eq:1} and~\eqref{eq:2}, we have that $-k<\real{(\lambda)} <-h$.

Conversely, let $A$ be $\Omega_V(h,k)$-stable. Then from~\cite{ChilG96}, there exists $X \succ 0$
such that
\begin{equation}\label{eq:3}
AX+XA^\top+2hX \prec 0\quad  \text{and}\quad AX+XA^\top+2kX \succ 0.
\end{equation}
Define $J$, $R$, and $Q$ as in~\eqref{eq:JRQ}.
Then clearly $A=(J-R)Q$. Also in view of~\eqref{eq:3} the matrix $R$ satisfies $k Q^{-1} \succ R\succ h Q^{-1}$.
\end{proof}

It is easy to see that in Theorem~\ref{thm:vert} when $h \geq 0$  the matrix $A$ is $\Omega_V$-stable
if and only if $A$ is a DH matrix since $R \succ h Q^{-1}$.

\subsection{Disks centred on the real line, $\Omega_D$} \label{paramD}

The disk $\Omega_D(-q,r)$ of radius $r$ and center $(-q,0)$ is
an LMI region with characteristic function $f_D(z)=\mat{cc} -r & q+\lambda \\ q+\overline{\lambda} & -r \rix$~\cite[Definition 2.1]{ChilG96}. More precisely,
we have the following lemma.
\begin{lemma}\cite[Lemma 3]{choudhary2020approximating} \label{lem:omega_stab}
Consider the region $\Omega_D(-q,r)$  where $q \in \R$ and $r > 0$, and let $\lambda \in \C$.
Then  $\lambda \in \Omega_D(-q,r)$ if and only if
$\mat{cc} -r & q+\lambda \\ q+\overline{\lambda} & -r \rix \prec 0$.
\end{lemma}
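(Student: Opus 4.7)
The plan is to use the standard criterion that a $2 \times 2$ Hermitian matrix is negative definite if and only if its $(1,1)$ entry is negative and its determinant is positive (equivalently, by Sylvester's criterion applied after multiplying by $-1$, or by noting that for a $2\times 2$ Hermitian matrix $H$, $H \prec 0$ iff $\tr(H) < 0$ and $\det(H) > 0$).

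First I would verify that $M(\lambda) := \bigl[\begin{smallmatrix} -r & q+\lambda \\ q+\overline{\lambda} & -r \end{smallmatrix}\bigr]$ is Hermitian: the diagonal entries $-r$ are real, and the $(2,1)$ entry is the complex conjugate of the $(1,2)$ entry since $\overline{q+\lambda} = q + \overline{\lambda}$ (using $q \in \R$). So the negative-definiteness criterion applies.

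Next I would compute the two conditions. The $(1,1)$ leading principal minor is $-r$, which is automatically negative since $r > 0$. The determinant is
\begin{equation*}
\det M(\lambda) \;=\; (-r)(-r) - (q+\lambda)(q+\overline{\lambda}) \;=\; r^2 - |q+\lambda|^2 .
\end{equation*}
Thus $M(\lambda) \prec 0$ if and only if $r^2 - |q+\lambda|^2 > 0$, i.e., $|q+\lambda| < r$, which is precisely the definition of $\lambda \in \Omega_D(-q,r)$.

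There is no real obstacle here — the lemma is essentially a direct application of the $2 \times 2$ negative-definiteness test combined with the definition of the disk. The only small care needed is to note that $q \in \R$ ensures $M(\lambda)$ is genuinely Hermitian (so that ``negative definite'' is well-defined in the usual sense), and that $r > 0$ makes the leading minor condition automatic, leaving only the disk inequality as the substantive constraint.
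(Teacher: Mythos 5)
Your proof is correct and takes essentially the same route as the paper: the paper justifies the disk characterization (in the paragraph introducing LMI regions) by checking that the $2\times 2$ Hermitian matrix $f_\Omega(z)$ is negative definite via its trace and determinant, which is the same two-minor test you apply. If anything, your version is cleaner — the paper's inline text contains sign typos (``negative since $r<0$'' and ``$r^2-|z+q|^2<0$'' should read $r>0$ and $r^2-|z+q|^2>0$), whereas your conditions $-r<0$ and $\det = r^2-|q+\lambda|^2>0$ are stated correctly.
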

We can characterize $\Omega_D$-stability as follows.
\begin{theorem}\cite[Theorem 3]{choudhary2020approximating} \label{thm:omega2}
Let $A \in \R^{n,n}$, $q \in \R$ and $r > 0$.
Then $A$ is $\Omega_D(-q,r)$-stable if and only if  $A=(J-R)Q$ for some $J,R,Q \in \R^{n,n}$ such that
$J^\top = -J$, $R^\top=R$, $Q$ is symmetric positive definite, and
\begin{equation}\label{eq:t1}
\mat{cc} rQ^{-1}& -qQ^{-1} \\ -qQ^{-1} & r Q^{-1} \rix \succ \mat{cc} 0& J-R \\ (J-R)^\top &0
\rix.
\end{equation}
\end{theorem}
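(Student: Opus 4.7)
The statement mirrors Theorems~\ref{mainthm} and~\ref{thm:vert} structurally, so I would follow the same two-step template. The ingredients I would use are Lemma~\ref{lem:omega_stab} (which converts membership $\lambda \in \Omega_D(-q,r)$ into a $2\times 2$ Hermitian negative-definiteness condition), Lemma~\ref{lem:dhevalues} (or rather, the underlying identity $v^*(J-R)v = \lambda\, v^*Q^{-1}v$ for a left eigenvector $v$ of $(J-R)Q$), and the LMI characterization of $\Omega$-stability of~\cite{ChilG96} applied to the disk region whose characteristic function is $f_D(z) = \bigl(\begin{smallmatrix} -r & q+z \\ q+\bar z & -r \end{smallmatrix}\bigr)$.

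\textbf{Sufficiency.} Assume $A=(J-R)Q$ with the stated structure and the block inequality~\eqref{eq:t1}. Let $\lambda$ be an eigenvalue of $A$ with left eigenvector $v\neq 0$, so that $v^*(J-R) = \lambda v^*Q^{-1}$ and in particular $v^*(J-R)v = \lambda\,v^*Q^{-1}v$. The plan is to test~\eqref{eq:t1} against block vectors of the form $\bigl(\begin{smallmatrix} \alpha v \\ \beta v\end{smallmatrix}\bigr)$ for arbitrary $(\alpha,\beta)\in\mathbb{C}^2\setminus\{0\}$. Using that $v^*Q^{-1}v>0$ and that $v^*(J-R)^\top v = \overline{v^*(J-R)v} = \bar\lambda\,v^*Q^{-1}v$, the resulting scalar inequality factors as $v^*Q^{-1}v$ times the quadratic form of the Hermitian matrix $\bigl(\begin{smallmatrix} r & -(q+\lambda) \\ -(q+\bar\lambda) & r\end{smallmatrix}\bigr)$ evaluated at $(\alpha,\beta)^\top$. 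Since $(\alpha,\beta)$ is arbitrary, this $2\times 2$ matrix is positive definite, which, after a sign flip, is exactly the condition of Lemma~\ref{lem:omega_stab}, i.e. $\lambda \in \Omega_D(-q,r)$.

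\textbf{Necessity.} Suppose $A$ is $\Omega_D(-q,r)$-stable. By the Chilali--Gahinet theorem applied to $f_D$, there exists $X\succ 0$ such that
\[
\begin{pmatrix} -rX & qX + AX \\ qX + (AX)^\top & -rX \end{pmatrix} \prec 0,
\]
or equivalently, after flipping signs,
\[
\begin{pmatrix} rX & -qX \\ -qX & rX \end{pmatrix} \succ \begin{pmatrix} 0 & AX \\ (AX)^\top & 0 \end{pmatrix}.
\]
As in the proofs of Theorems~\ref{mainthm} and~\ref{thm:vert}, I would now set
\[
R := -\tfrac{AX+(AX)^\top}{2},\qquad J := \tfrac{AX-(AX)^\top}{2},\qquad Q := X^{-1},
\]
so that $J^\top=-J$, $R^\top=R$, $Q\succ 0$, $A=(J-R)Q$, and $AX = J-R$. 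Substituting $X=Q^{-1}$ and $AX = J-R$ into the LMI above immediately yields~\eqref{eq:t1}, finishing the proof.

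\textbf{Expected obstacle.} The only non-bookkeeping step is recognizing which Chilali--Gahinet LMI corresponds to the disk, i.e.\ choosing the right $B$ and $C$ in $f_\Omega(z)=B+zC+\bar zC^\top$; but this is already spelled out in the excerpt preceding Lemma~\ref{lem:dhevalues}, so I do not anticipate a real difficulty. Unlike Theorem~\ref{mainthm} (conic sector), no sign complication arises from a parameter like $a$, because the block coupling in~\eqref{eq:t1} absorbs both $r$ and $q$ symmetrically; in particular, I would not expect $R$ to be forced PSD here, so the characterization is genuinely of DH-with-extra-LMI type rather than of plain DH type.
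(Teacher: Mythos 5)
Your proposal is correct and follows essentially the same route as the paper: the sufficiency direction tests~\eqref{eq:t1} against block vectors built from a left eigenvector and invokes Lemma~\ref{lem:omega_stab} after factoring out $v^*Q^{-1}v>0$, and the necessity direction applies the Chilali--Gahinet LMI for the disk and substitutes the construction~\eqref{eq:JRQ}, exactly as in the paper. Your closing remark that $R$ need not be positive semidefinite here also matches the paper's comment following the theorem.
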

\begin{proof} 
First suppose that $A=(J-R)Q$ with $J^\top=-J$, $R^\top=R$ and $Q\succ 0$ satisfying~\eqref{eq:t1} holds.
Let $\lambda=\lambda_1+i\lambda_2$ with $\lambda_1,\lambda_2 \in \R$ be an eigenvalue of $A$ and $v \in \C^{n}\setminus \{0\}$ be a corresponding left eigenvector.
Since~\eqref{eq:t1} holds, we have that
\begin{eqnarray*}
\mat{cc} v^* & 0 \\ 0 & v^*\rix\mat{cc} rQ^{-1}& -qQ^{-1} \\ -qQ^{-1} & r Q^{-1} \rix
\mat{cc} v & 0 \\ 0 & v\rix \succ
\mat{cc} v^* & 0 \\ 0 & v^*\rix\mat{cc} 0& J-R \\ (J-R)^\top &0
\rix \mat{cc} v & 0 \\ 0 & v\rix.
\end{eqnarray*}
This implies that
\begin{eqnarray*}
\mat{cc} r & -q \\ -q & r  \rix v^*Q^{-1}v \succ \mat{cc} 0& v^*(J-R)v \\ v^*(J-R)^\top v &0
\rix.
\end{eqnarray*}
Since $v^*Q^{-1}v > 0$ as $Q \succ 0$, we obtain
\begin{eqnarray*}
 \mat{cc} r & -q \\ -q & r  \rix  \succ \mat{cc} 0& \frac{v^*(J-R)v}{v^*Q^{-1}v} \\ \frac{v^*(J-R)^\top v}{v^*Q^{-1}v} &0
\rix .
\end{eqnarray*}
Thus in view of Lemma~\ref{lem:dhevalues}, we have
\begin{eqnarray*}
 \mat{cc} r & -q \\ -q & r  \rix \succ \mat{cc} 0& i\lambda_2+\lambda_1 \\ -i\lambda_2+\lambda_1 &0
\rix \quad \Longrightarrow \quad  \mat{cc} r & -q-\lambda \\ -q-\overline{\lambda} & r  \rix \succ 0.
\end{eqnarray*}
This implies by using Lemma~\ref{lem:omega_stab} that $\lambda \in \Omega_D(-q,r)$ and therefore $A$ is
$\Omega_D(-q,r)$-stable.

Conversely, suppose $A$ is $\Omega_D(-q,r)$-stable.  Then by \cite[Theorem~2.2]{ChilG96} there exists $X \succ 0$ satisfying
\begin{equation}\label{eqtr1}
\mat{cc} -rX & qX+AX \\ qX +XA^\top & -rX
\rix \prec 0.
\end{equation}
Define $J$, $R$, and $Q$ as in~\eqref{eq:JRQ}.
Then clearly $A=(J-R)Q$ with $J^\top = -J$, $R$ symmetric and $Q \succ 0$.
Moreover, by~\eqref{eqtr1}, we have
\begin{eqnarray*}
0 \succ \mat{cc} -rX & qX+AX \\ qX +XA^\top & -rX \rix &=& \mat{cc} -rQ^{-1} & qQ^{-1}+AQ^{-1} \\ qQ^{-1} +Q^{-1}A^\top & -rQ^{-1}
\rix \\
&=&\mat{cc} -r Q^{-1}  & q Q^{-1}  \\ q Q^{-1}  & -r Q^{-1}  \rix  +
\mat{cc}0 & J-R \\ (J-R)^\top & 0\rix.
\end{eqnarray*}
This implies that
\begin{eqnarray*}
\mat{cc} r Q^{-1}  & -q Q^{-1}  \\ -q Q^{-1} & r Q^{-1}  \rix \succ
\mat{cc}0 & J-R \\ (J-R)^\top & 0\rix.
\end{eqnarray*}
This completes the proof.
\end{proof}

We note that in the above theorem, the matrix $R$ need not be positive semidefinite and thus a $\Omega_D$-stable matrix need not be a DH matrix. However, if the disc $\Omega_D$ completely lies in the left half of the complex plane,
then $A$ is a DH matrix.

\subsection{Reformulation of the nearest $\Omega$-stable matrix} 

The set $\mathbb S_{\Omega}^{n,n}$ can be reformulated in terms of matrix triplets with symmetric and PSD constraints. 
For this, we introduce the auxiliary variable $P = Q^{-1} \succ 0$.
In view of Theorems~\ref{mainthm},~\ref{thm:vert} and~\ref{thm:omega2}, this allows us to parametrize the sets
$\mathbb S_{\Omega_C(a,\theta)}^{n,n}$,
$\mathbb S_{\Omega_V(h,k)}^{n,n}$, and
$\mathbb S_{\Omega_D(-q,r)}^{n,n}$ as convex sets via the DH form. This is done as follows: 
\begin{eqnarray} \label{eq:omegaC}
\mathbb S_{\Omega_C(a,\theta)}^{n,n} = \Big\{(J-R)P^{-1} \ \big| && J,R,P \in \R^{n,n}, P \succ 0, \nonumber\\
&&\mat{cc}
\sin(\theta) \, (R+aP) & -\cos(\theta) \,J \\
\cos(\theta) \, J & \sin(\theta) \, (R+aP)
\rix \succ 0
\Big\},
\end{eqnarray}
\begin{equation} \label{eq:omegaV}
\mathbb S_{\Omega_V(h,k)}^{n,n} = \left\{(J-R)P^{-1} \ \big| ~ J,\,R,\,P \in \R^{n,n},\, P \succ 0,\, k P \succ R\succ h P
\right\},
\end{equation}
and
\begin{eqnarray} \label{eq:omegaD}
\mathbb S_{\Omega_D(-q,r)}^{n,n} = \Big\{(J-R)P^{-1} \ \big|&& \ J,R,P \in \R^{n,n}, J^T = -J,\nonumber \\
&&\mat{cc} rP & -qP-(J-R) \\ -qP-(J-R)^{\top} & r P \rix \succ 0
\Big\},
\end{eqnarray}
where $0 < \theta < \pi/2$, $ h < k$ and $r > 0$.
Note that these sets are non-convex and open.  From an optimization point of view, it does not make much sense to optimize on such sets since the optimal solution(s) may not be attained. Therefore, we will consider the closure of these sets: this amounts to replacing all constraints involving a positive definite constraint with a positive semidefinite  constraint, that is,
replace $\succ 0$ with $\succeq 0$,
in the definition of the sets~\eqref{eq:omegaC}, \eqref{eq:omegaV} and~\eqref{eq:omegaD}.
We will denote the corresponding sets as
 $\bar{\mathbb S}_{\Omega_C(a,\theta)}^{n,n}$,
$\bar{\mathbb S}_{\Omega_V(h,k)}^{n,n}$, and
$\bar{\mathbb S}_{\Omega_D(-q,r)}^{n,n}$, respectively. Note that by considering the closure of these sets, as done in~\cite{GilS16}, we do not change the value of the infimum of~\eqref{eq:prob_def_omega}. 

Finally, given
 $a_j$ and $0 < \theta_j < \pi/2$ for $1 \leq j \leq p$,
$h < k$,
and several disks of parameters $(q_i,r_i)$ $1 \leq i \leq k$,  we tackle~\eqref{eq:prob_def_omega} by solving
\begin{equation} \label{finalform}
\inf_{J,R,P} {\|A - (J-R)P^{-1}\|}_F^2
\quad
\text{ such that }
\quad
 (J-R)P^{-1}
\in  \bar{\mathbb S}_\Omega^{n \times n},
\end{equation}
where
\begin{equation} \label{omega:finalform}
\bar{\mathbb S}_\Omega^{n \times n}
\; = \;
\cap_{j=1}^p
\bar{\mathbb S}_{\Omega_C(a_j,\theta_j)}^{n,n}
\cap
\bar{\mathbb S}_{\Omega_V(h,k)}^{n,n}
\cap_{i=1}^k
\bar{\mathbb S}_{\Omega_D(q_i,r_i)}^{n,n}.
\end{equation}
 The feasible set of the above optimization problem only involves convex linear matrix inequality constraints. Of course, the objective function is non-convex and the problem remains difficult,
 but it is arguably easier, from an algorithmic point of view, to handle a non-convex objective function rather than a non-convex feasible set.

\paragraph{Implementation} 

There is a key difference when considering the general $\Omega$-stability problem: as opposed to the simpler continuous-time stability case, 
the projection onto the feasible set does not have closed form (in terms of eigenvalue decompositions of symmetric matrices), and hence the corresponding optimization problem is more difficult to handle. 
In particular, first-order methods that use projection onto the feasible set become much more expensive. 
In fact, as far as we know, to obtain a high-precision projection onto such general PSD matrix sets, only IPMs are available, running in $\mathcal{O}(n^6)$  operations, instead of the $\mathcal{O}(n^3)$ requires for eigenvalue decompositions.  
In~\cite{choudhary2020approximating}, we used a BCD scheme based on IPMs to solve the subproblems in $(J,R)$ and $Q$ alternatively. 

Several initialization are possible, in particular the identity initialization where $Q = I_n$, while $(J,R)$ are optimally computed. For other initializations and discussions, we refer to~\cite{choudhary2020approximating}.

\subsection{Numerical example with the AC7 matrix} 

Let us illustrate this with the AC7 matrix, 
and the set  
\[
\Omega \quad = \quad 
\Omega_C(0,7 \pi / 8) 
\; \cap \; 
\Omega_C(-0.5,+\infty) 
\; \cap \; 
\Omega_D(1,1). 
\] 
Using BCD and the identity initialization, the obtained approximation is displayed on 
Figure~\ref{AC7_nearestJRQ_Omega}.  
\begin{figure}[ht!]
\begin{center}
\includegraphics[width=0.9\textwidth]{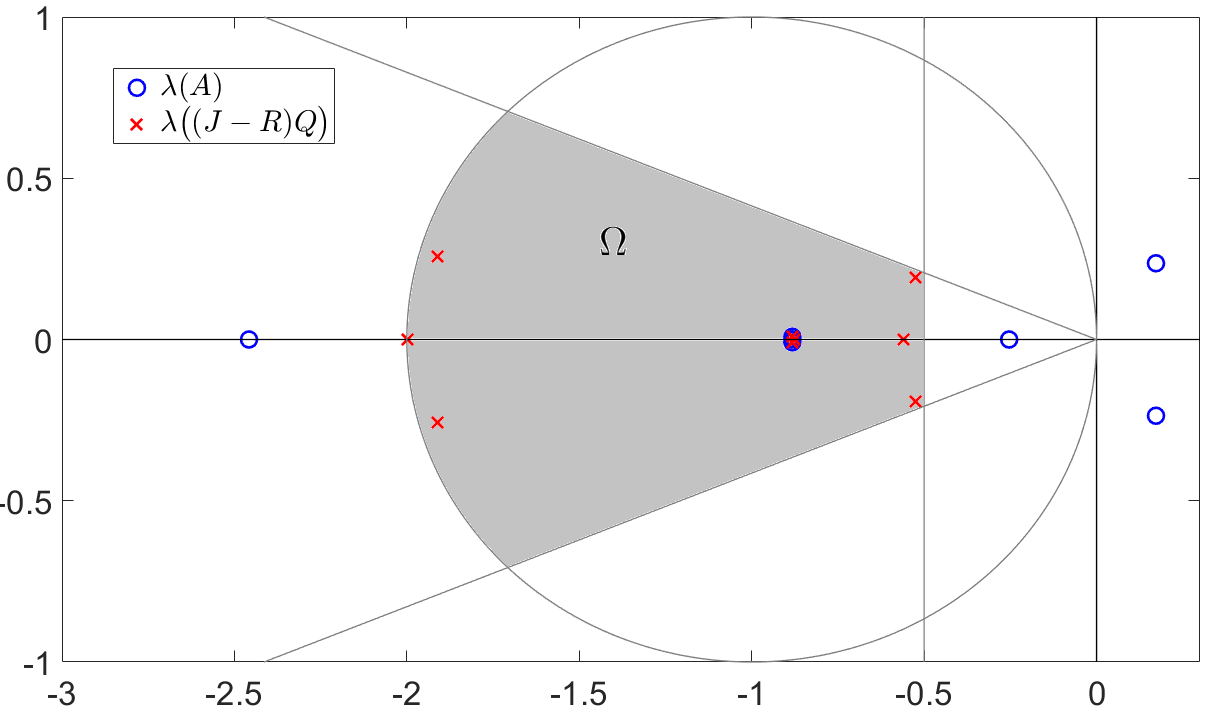}
\caption{Eigenvalues of $A$, and its $\Omega$-stable approximation $(J-R)Q$. Note that the eigenvalues -20 and -30 of $A$ are not displayed.  \label{AC7_nearestJRQ_Omega}}
\end{center}
\end{figure}

\section{Other approaches to tackle the nearest stable matrix problem} \label{sec:otherapproaches} 

In this section, we briefly mention other approaches to tackle the nearest stable matrix problem.

\paragraph{Successive convex approximation} 

In~\cite{ONV13}, authors propose an iterative approach. The main idea is as follows: 
at iteration $k$, the algorithm constructs an SDP-representable set (namely, an ellipsoid) around the current stable approximation $X^{(k)} \in \mathbb S^{n,n}$ of $A$, such that this set is contained within $\mathbb S^{n,n}$. 
This convex set relies on the Lyapunov equation: if $X$ is stable, then there exists $P \succ 0$ such that $XP + PX^\top \preceq 0$. 
The next iterate is computed as the nearest matrix to $A$ within that set. 

A drawback of this approach is that it is computationally expensive, requiring to solve an SDP in $O(n^3)$ variables at each step.

\paragraph{Matrix stabilization using differential equations} 

Guglielmi and Lubich~\cite{guglielmi2017matrix} proposed yet another completely different approach using differential equations. 
They optimize directly the norm of the perturbation, $\Delta A$, such that $A+\Delta A$ is stable.  

A main advantage of their approach is that it can easily handle structure of the stable approximation, $A+\Delta A$, that is, 
it can directly enforce some structure, 
 such as symmetry or a sparsity pattern, 
 on the stable approximation, which is not trivial when using the DH form, $(J-R)Q$. 
 However, in the unstructured case, 
 the two approaches perform similarly, 
 as reported in~\cite{guglielmi2017matrix}.

 \paragraph{Riemannian optimization}  
 
 Noferini and Poloni \cite{noferini2020nearest} recently proposed 
 a highly efficient approach to compute the nearest $\Omega$-stable matrix. They parametrize $X$ with its complex Schur factorization/decomposition, $X = U T U^*$ 
 where $U$ is unitary (that is, $U U^* = I$) 
 and $T$ is upper triangular. They observe that, if $U$ is fixed, then there is an easy solution to the simplified problem
in the variable $T$ only. 
As a consequence, finding an
$\Omega$-stable matrix nearest to $A$ is equivalent to minimizing a certain function (depending both on $\Omega$ and on $A$) over the matrix Riemannian manifold of unitary matrices $U(n)$. 
After the reformulation, the authors rely on the software for  optimization over manifolds, Manopt~\cite{boumal2014manopt}.  
 The code is available from~\url{https://github.com/fph/nearest-omega-stable}. 
 
 Their approach outperforms the previously introduced methods; in particular that based on DH matrices. 
 However, as for DH matrices, this algorithm cannot easily handle structure.

 \section{Application: minimal-norm static feedbacks} \label{sec:minnomfeed}  

In this section, we consider a continuous linear-time invariant (LTI) system in the form
\begin{eqnarray*}
\dot{x}(t) &=& A x(t) + Bu(t), \\
y(t) &=& Cx(t), 
\end{eqnarray*}
and discuss two ways to stabilize it, depending on the choice of the input $u(t)$.

 \subsection{Static-state feedback}
 
 Stabilizing the system pair $(A,B)$
using feedback controllers is a fundamental one, and is referred to as the static-state feedback (SSF) problem. 
In this setting, the feedback is chosen as $u(t) = - K x(t)$, so that 
$\dot{x}(t) = (A - B K) x(t)$.   Therefore, it requires to find $K \in \R^{m,n}$ such that $A-BK$ is stable.

Note that, in the SSF problem, the state must be measured and this is not always the case. Otherwise the state must be estimated from measurements 
of $y(t)$ and $u(t)$.   
In practice, it often makes more sense to control using the output, referred to as the static-output feedback problem, using $u(t) = - K y(t)$; see for example the discussion in~\cite{Ferrarinotes}. 
However, this is a numerically much more difficult problem to solve, and is discussed in the next section.

Typically, one requires to minimize the norm of the feedback matrix, that is,  solve
\begin{equation} \label{eq:ssf} 
\inf_{K} \, {\|K\|} \quad \text{ such that } \quad A-BK \text{ is stable},
\end{equation} 
where ${\|\cdot\|}$ is a given norm such as the $\ell_2$ norm, ${\|\cdot\|}_2$, or the Frobenius norm, ${\|\cdot\|}_F$.

In~\cite{gillis2020minimal}, we used the DH form to solve this problem. In view of Theorem~\ref{th:stableDHmat}, the following theorem is relatively straightforward. 
\begin{theorem}\cite[Theorem 4]{gillis2020minimal} \label{th:SOFth}
Let $A\in \R^{n,n}$ and $B\in \R^{n,m}$. Then the following are equivalent.
\begin{enumerate}

\item There exists $K$ such that $A-BK$ is stable. 

\item There exists a DH matrix $(J-R)Q$ such that $A-BK=(J-R)Q$
for some $K\in \R^{m,n}$.

\item There exists a DH matrix $(J-R)Q$ such that $(I_n-BB^{\dagger})(A-(J-R)Q)=0$.

\end{enumerate}
\end{theorem}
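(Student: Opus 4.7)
The plan is to prove the three-way equivalence by establishing the cycle (1) $\Rightarrow$ (2) $\Rightarrow$ (3) $\Rightarrow$ (2) $\Rightarrow$ (1), exploiting the fact that (2) $\Leftrightarrow$ (3) is really a statement about range inclusion.

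For (1) $\Leftrightarrow$ (2), I would appeal directly to Theorem~\ref{th:stableDHmat}: every stable matrix admits a DH factorization $(J-R)Q$, and conversely every DH matrix is stable (as observed after the definition of DH matrices in Section~\ref{sec:reformcont}). So writing $A-BK$ as stable is equivalent to writing it as some $(J-R)Q$ with the required structure on $J,R,Q$. This direction costs essentially nothing beyond invoking the theorem.

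The substantive step is (2) $\Leftrightarrow$ (3). The key observation is that $BB^{\dagger}$ is the orthogonal projector onto $\mathrm{range}(B)$, so $(I_n - BB^{\dagger}) M = 0$ if and only if every column of $M$ lies in $\mathrm{range}(B)$, that is, $M = BK$ for some $K \in \R^{m,n}$. Applying this to $M = A - (J-R)Q$ yields the equivalence directly. Concretely, for (2) $\Rightarrow$ (3), if $A - (J-R)Q = BK$ then $(I_n - BB^{\dagger})BK = BK - B(B^{\dagger}B)K = BK - BK = 0$ by the defining identity $BB^{\dagger}B = B$ of the Moore--Penrose pseudoinverse. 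For (3) $\Rightarrow$ (2), take $K := B^{\dagger}(A-(J-R)Q)$; then (3) gives $A-(J-R)Q = BB^{\dagger}(A-(J-R)Q) = BK$, hence $A - BK = (J-R)Q$.

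There is no real obstacle here; the only subtlety worth flagging is the identity $BB^{\dagger}B = B$ (and the fact that $BB^{\dagger}$ is a projector onto $\mathrm{range}(B)$), which is what makes the characterization in (3) equivalent to the existence of a feedback $K$ in (2). The value of formulation (3) is not logical but algorithmic: it removes the explicit variable $K$ and replaces the feedback constraint by a linear equality constraint on the DH parameters $(J,R,Q)$, which fits directly into the convex-constrained, non-convex-objective framework developed in Section~\ref{sec:reformcont} for attacking nearest-stable-matrix problems.
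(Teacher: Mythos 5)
Your proof is correct and follows exactly the intended route: the paper states this result without proof (calling it ``relatively straightforward'' in view of Theorem~\ref{th:stableDHmat}), and your argument supplies precisely the expected details---(1)~$\Leftrightarrow$~(2) from the stable~$=$~DH characterization, and (2)~$\Leftrightarrow$~(3) from the identity $BB^{\dagger}B=B$ and the fact that $(I_n-BB^{\dagger})M=0$ iff $M=BK$ for some $K$. Nothing is missing.
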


 Theorem~\ref{th:SOFth} allows to find a feasible solution to the SSF problem, solving 
\begin{equation}\label{eq:algo1ab}
\mu \; := \; \inf_{J,R,Q\in \R^{n,n}, J^\top=-J,R\succeq 0,Q\succ 0}{\left \|(I_n-BB^\dagger)(A-(J-R)Q)\right \|}, 
\end{equation} 
and checking whether $\mu = 0$. However, this problem is non-convex, because of the product $(J-R) Q$ in the objective. 
However, checking whether $\mu = 0$ can be done via solving an SDP: using the 
change of variable $P = Q^{-1}$, $\mu$ is equal to zero if and only if the followinh infimum is equal to zero 
\[
\inf_{J,R,Q\in \R^{n,n}, J^\top=-J,R\succeq 0, P\succeq I}{\left \|(I_n-BB^\dagger)(AP-(J-R))\right \|}, 
\]
where we imposed, w.l.o.g., that $P\succeq I$ to avoid the trivial solution (namely $J=R=0$). 
Note that this provides a new way to check whether the pair $(A,B)$ is stabilizable. 
 
Given an optimal solution, with $\mu = 0$, one can then solve for example (see~\cite{gillis2020minimal})
\[
\inf_{K,J=-J^\top,R\succeq 0,Q \succeq 0} \, {\|B^\dagger ( A - (J-R) Q )\|} \quad \text{ such that } \quad A-BK = (J-R) Q,  
\]
using a block coordinate descent method. 
It is however possible to reformulate the problem to obtain a convex feasible set: using $P = Q^{-1}$ again, we obtain  
\[
\inf_{J=-J^\top, R\succeq 0, P \succ 0} 
\, {\| B^\dagger ( A - (J-R) P^{-1} ) \|} 
\quad \text{ such that } \quad (I_n-BB^\dagger)(AP-(J-R)) = 0.   
\]  
To solve this new problem, we rely on a trust-region approach (see Section~\ref{sec:trustregion}). The model of the objective is obtained by linearizing  $B^\dagger ( A - (J-R) P^{-1} )$ at each step, so that the objective is quadratic when using the Frobenius norm. More precisely,  given an initial solution $(J,R,P)$, 
we look for $(\Delta J,\Delta R, \Delta P)$ such that
$(J+\Delta J,R+\Delta R, P+\Delta P)$ is a better solution than $(J,R,P)$.
To do so, we linearize the term $( A - (J+\Delta J-(R+\Delta R))  (P+\Delta P)^{-1})$ by using 
\[
(P+\Delta P)^{-1} \approx P^{-1} - P^{-1} \Delta P P^{-1},
\]
and removing the non-linear terms appearing in the product, that is, we use the following approximation:
\begin{eqnarray*}
 A - (J+\Delta J-(R+\Delta R))  (P+\Delta P)^{-1}
\approx
 A - (J+\Delta J-(R+\Delta R)) P^{-1} +(J-R)  P^{-1} \Delta P P^{-1}.
\end{eqnarray*}
This results in the following optimization problem
\begin{align}
\inf_{\Delta J, \Delta R, \Delta P\in \R^{n,n}} &
{\left \|
B^\dagger \left(
A - (J+\Delta J)P^{-1}+(R+\Delta R) P^{-1} +(J-R)  P^{-1} \Delta P P^{-1}
\right)
\right \|}  \nonumber  \\
  \quad  \text{ such that }
& \quad
\Delta J^\top=- \Delta J, R+\Delta R\succeq 0,P+\Delta P\succ 0, \label{eq:algo2}  \\
&  \quad
( I - B B^{\dagger} ) ( A \Delta P - (\Delta J - \Delta R) ) = 0 ,
\nonumber \\
&  \quad
\|\Delta J\| \leq \epsilon \| J\|,
\|\Delta R\| \leq \epsilon \| R\|,
\|\Delta P\| \leq \epsilon \| P\|. \nonumber
\end{align}
Similar to a trust-region method, the value of $\epsilon$ is updated in the curse of the algorithm. As long as
the error of $(J+\Delta J,R+\Delta R,P+\Delta P)$ is larger than that of $(J,R,P)$, $\epsilon$ is decreased. For the next step, $\epsilon$ is increased to allow a larger trust-region radius.
Since~\eqref{eq:algo2} is an SDP, we refer this this approach to as the sequential SDP (SSDP) method. 
Numerical experiments showed that  
SSDP performs significantly better than BCD on this problem; see~\cite[Table A.1]{gillis2020minimal}.

 \subsection{Static-output feedback}

 A closely related problem is that of stabilizing the system triplet $(A,B,C)$, referred to as the static-output feedback (SOF) problem. 
The goal is to stabilize the system via the output, that is, to take $u(t) = - Ky(t)$ so that 
\[
 \dot{x}(t) = A x(t) + B  u(t) 
  = A x(t) -  B K y(t) 
  = A x(t) -  B K C x(t) 
  = (A - BKC) x(t), 
\] 
is stable. The SOF problem therefore requires to find $K \in \R^{m,p}$ such that
$A-BKC$ is stable, if possible; 
see~\cite{SyrADG97} for a survey on the SOF problem. 
This decision problem is believed to be NP-hard as no polynomial-time algorithm is known; let us quote~\cite{blondel2000survey}: 
\begin{quote} The SOF problem is widely studied and still unsolved\dots a satisfactory answer to this problem has yet to be
found. This problem is often cited as one of the difficult open
problems in systems and control. Still, despite various attempts, 
it is unclear whether the problem is NP-hard. 
\end{quote}

 Note that the difficulty is that even finding a feasible 
solution is hard, as opposed to the SSF problem. 
Similarly as for the SOF problem (see Theorem~\ref{th:SOFth}), finding $K$ such that $A-BKC$ is stable is equivalent to find a DH form for it, that is, $A-BKC = (J-R)Q$.

Using similar derivations as for the SSF problem,  
  finding a feasible solution of the SOF problem is equivalent to finding an optimal solution with objective function equal to zero of the following  optimization problem 
\begin{equation}\label{eq:refabc1}
\inf_{J,R,Q\in \R^{n,n}, J^T=-J,R\succeq 0,Q\succ 0}{\left \|(I_n-BB^\dagger)(A-(J-R)Q)\right \|}
+ {\left \|(A-(J-R)Q)(C^\dagger C-I_n)\right \|}. 
\end{equation} 
To tackle this problem, we use an SSDP approach. 

If a feasible solution is found, which has the form $K = B^\dagger(A-(J-R)Q)C^\dagger$, we refine it by  considering 
\begin{align}
\inf_{(J,R,Q), (J-R)Q \text{ is a DH matrix}} & \quad {\left\|B^\dagger(A-(J-R)Q)C^\dagger\right\|}, \nonumber \\
 \text{such that} & \quad
 (I_n-BB^\dagger)(A-(J-R)Q)=0,  \text{ and }   \label{eq:formulationPabc} \\
 & \quad (A-(J-R)Q)(C^\dagger C-I_n)=0. \nonumber
\end{align}
To solve~\eqref{eq:formulationPabc}, we cannot use SSDP because the  constraints cannot be linearized exactly (we would obtain an infeasible solution after one step). 
 Instead, we resort to BCD: alternatively solve \eqref{eq:formulationPabc} for $(J,R)$ with $Q$ fixed, and then for $Q$ with $(J,R)$ fixed.

 \subsection{Numerical example with the AC7 matrix} \label{AC7:BC}

For the AC7 system, the matrix $B$ is given by 
\[ 
B = [0, 
     0, 
     0, 
     0, 
     0, 
    30, 
     0, 
     0, 
     0]^\top. 
\]  
 Using the SSDP approach, we obtain 
 \[
 K = \left( \begin{array}{ccccccccc} 
 -0.027 &  -0.018 &  -0.013 &  -0.066 &  -0.005 &  -0.005 &  -0.001 &  0 &  0 \\ 
\end{array} \right), 
 \]
 with $\|K\|_F = 0.075$, 
 and the eigenvalues of $A-BK$ are given by 
\[
[  -0.005 + 0.34i, 
  -0.005 - 0.34i, 
  -0.017, 
  -0.88 + 0.001i, 
  -0.88 - 0.001i, 
  -0.88,   
  -2.37,  
 -19.56,  
 -30.24  ]. 
\]

For the static-output feedback, we have 
\[ 
C = \left( \begin{array}{ccccccccc} 
 -0.005 &  0.476 &  0.001 &  -0 &  0.034 &  0 &  0.005 &  -0.031 &  0 \\ 
 0 &  0 &  1 &  0 &  0 &  0 &  0 &  0 &  0 \\ 
\end{array} \right). 
\] 
Using the SSDP approach, we obtained $K = [-0.3635 \,   -0.3379]$  with $\|K\|_F = 0.36$, for which the eigenvalues of $A-BKC$ are given by 
\[
[  
  -0.0004 \pm 0.1931i, 
-0.51, 
  -0.8821 \pm 0.0088i, 
  -0.8821, 
  -3.18, 
   -16.54, 
  -32.14]. 
\]
 
 \begin{remark} 
 Using the results from Section~\ref{sec:omegastab} to find the nearest $\Omega$-stable matrix, it is possible to design SSF (resp.\ SOF) such that $A-BK$ (resp.\ $A-BKC$) is $\Omega$ stable, adding proper constraints on $J$, $R$ and $Q$.  
 \end{remark}


\chapter{Nearest positive-real system and nearest stable matrix pair}   \label{chap:posrealsys}

This chapter shows that the set of linear PH systems can be exploited to compute a nearby positive real (PR) system to a given non PR system $(E,A,B,C,D)$, as we have done in~\cite{gillis2018finding}. 
This framework is then used to find a nearby regular, stable, and index one system to a given descriptor system $E\dot{x}=Ax$, which is our result from~\cite{gillis2018computing}. 

\begin{remark} 
Note that \cite{gillis2018computing} appeared before \cite{gillis2018finding}, as we first worked on the simpler descriptor system $E\dot{x}=Ax$, before extending this result to general systems \eqref{eq2:sys}. 
However, as the result from \cite{gillis2018computing} is a special case of  \cite{gillis2018finding}, we first present the result from \cite{gillis2018finding} in Section~\ref{sec:nearestPR}, and then explain the specificities of the descriptor system $E\dot{x}=Ax$ in Section~\ref{sec:nearestdessys}. 
\end{remark}

\section{Introduction} \label{sec:theproblem}

The nearest system problems can be formulated in a generic way as follows: 
\begin{problem}\label{prob_g}
For a given system $(E,A,B,C,D)$ and a given set $\mathcal{D}$, find the nearest system $(\tilde E,\tilde A,\tilde B,\tilde C, \tilde D) \in \mathcal{D}$ to $(E,A,B,C,D)$, that is, solve
\begin{equation*}\label{def_F}
\inf_{(\tilde E,\tilde A,\tilde B,\tilde C, \tilde D) \in \mathcal{D}}
\mathcal{F}(\tilde A,\tilde B,\tilde C,\tilde D,\tilde E),
\end{equation*}
where
\begin{equation}\label{eq:def_F}
\mathcal{F}(\tilde A,\tilde B,\tilde C,\tilde D,\tilde E) =
{\|A-\tilde A\|}_F^2+{\|B-\tilde B\|}_F^2
+{\|C-\tilde C\|}_F^2+{\|D-\tilde D\|}_F^2
+ {\|E-\tilde E\|}_F^2.
\end{equation}
\end{problem}

By choosing the set $\mathcal D$, one can define various nearness problems for the system~$(E,A,B,C,D)$.   The goal of this chapter is to consider the following variants of this problem for continuous-time systems (see Section~\ref{sec:defprsys} for the definitions of PR, ESPR and admissible systems): 
\begin{enumerate}
\item \underline{Nearest PR system} ($\mathcal P$):~
$\mathcal{D} = \mathbb S$
where $\mathbb S$ is the set of all PR systems
$(\tilde E,\tilde A,\tilde B,\tilde C, \tilde D)$.

\item \underline{Nearest ESPR system} ($\mathcal P_e$):~
$\mathcal{D} = \mathbb S_e$ where $\mathbb S_e$ is the set of all admissible ESPR systems
$(\tilde E,\tilde A,\tilde B,\tilde C, \tilde D)$ with $\tilde D+\tilde D^T \succ 0$.

\item \underline{Nearest admissible system} ($\mathcal P_a$):~$\mathcal{D} = \mathbb S_a$ where $\mathbb S_a$ is the set of all admissible descriptor systems
$(\tilde E,\tilde A,\tilde B,\tilde C,\tilde D)$. \label{problem:Pa}
\end{enumerate}

We will also consider the variants of ($\mathcal P$) and ($\mathcal P_e$) for standard systems with the additional constraints that $\tilde{E} = E = I_n$. The corresponding variant of ($\mathcal P_a$) is the nearest stable matrix problem considered in Chapter~\ref{chap:contsys}.
These problems are challenging because the feasible sets $\mathbb S$, $\mathbb S_e$, and $\mathbb S_a$ are unbounded, highly nonconvex, and neither open nor closed~\cite{gillis2018finding,choudhary2020approximating}.

\section{Nearest PR system problem} \label{sec:nearestPR}

As mentioned in Section~\ref{sec:defprsys},  the positive realness of an LTI dynamical system is equivalent to passivity, which means that the system does not generate energy. 
Since passivity and positive realness are equivalent for LTI systems, the distance to positive realness has
direct  applications in  passive model approximations (see Section~\ref{sec:bad2good}). 

The nearest PR system problem is complementary with the distance to
nonpassivity for control systems; see~\cite{OveV05} for complex standard systems. These problems are closely related to the Hamiltonian matrix nearness problems~\cite{AlaBKMM11,GugKL15}.
Several algorithms tackle this problem using the spectral
properties of the related Hamiltonian/skew-Hamiltonian matrices  or pencils
for the input systems that are asymptotically stable, controllable,
observable, and almost passive; see~\cite{Tal04,SchT07,WanZKPW10,VoiB11,BruS13} and the references therein.

As far as we know, except~\cite{gillis2018finding}, no other algorithm exists for the nearest PR system problem that does not make any assumption on the input system and that allows perturbations to all matrices $(E,A,B,C,D)$ describing the system.
In the following, we explain the algorithm proposed in~\cite{gillis2018finding}, which is based on the generalization of the results from~\cite{GilS16}, where authors used the structure of PH systems to find a nearby stable standard system to an unstable one.  As opposed to the previously proposed methods, this algorithm is not based on the spectral properties of Hamiltonian matrices or pencils. It can be applied to any given LTI dynamical system.


\subsection{Reformulation of  ($\mathcal P_e$) using PH systems}

We first discuss the link between PR systems and PH systems.
The positive realness of a system~\eqref{eq2:sys} can be characterized in terms
of solutions $X$ to the following linear matrix inequalities (LMIs):
\begin{equation}\label{eq:LMI1}
\mat{cc} A^\top X +X^\top  A & X^\top B-C^\top  \\B^\top X-C & -D-D^\top  \rix \preceq 0
\quad \text{and} \quad E^\top X=X^\top E \succeq 0.
\end{equation}
\begin{theorem}[\cite{FreJ04}, Theorem~3.1] \label{thm:suff_PR}
Consider a regular system $(E,A,B,C,D)$ in the form~\eqref{eq2:sys}. If the LMIs~\eqref{eq:LMI1} have a solution $X \in \R^{n,n}$, then
$(E,A,B,C,D)$ is PR.
\end{theorem}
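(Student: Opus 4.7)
The approach is the standard KYP-style argument, adapted to the descriptor setting. Fix any $s \in \mathbb{C}$ with $\real{(s)} > 0$ such that $sE - A$ is invertible, and fix an arbitrary $w \in \mathbb{C}^m$. I would introduce the auxiliary vector $x := (sE-A)^{-1} B w \in \mathbb{C}^n$, which satisfies $sEx = Ax + Bw$, and then evaluate the Hermitian quadratic form of the left-hand side of the first LMI in~\eqref{eq:LMI1} at the vector $\begin{pmatrix} x \\ w \end{pmatrix}$. By the LMI, the resulting scalar is $\leq 0$, and the plan is to massage it into the desired inequality.

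Expanding and using $Ax = sEx - Bw$, the $(1,1)$-block term becomes
\[
x^*(A^\top X + X^\top A)x \; = \; (s+\bar s)\, x^* X^\top E x - 2\real{(x^* X^\top B w)},
\]
where the symmetry $E^\top X = X^\top E$ makes $x^* X^\top E x$ real. The off-diagonal contribution equals $2\real{(x^* X^\top B w)} - 2\real{(w^* C x)}$, and the $(2,2)$-block contributes $-2\real{(w^* D w)}$. The $X^\top B w$ pieces cancel, and since $G(s)w = Cx + Dw$, the LMI collapses to
\[
2\real{(s)}\, x^* X^\top E x \;-\; w^*\bigl(G(s) + G(s)^*\bigr) w \;\leq\; 0.
\]
Because $\real{(s)}>0$ and $E^\top X = X^\top E \succeq 0$, the first term is nonnegative, giving $w^*(G(s)+G(s)^*)w \geq 0$. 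Since $w$ is arbitrary, condition (b) of the PR definition holds at every $s$ in the open right half-plane where $G$ is defined.

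It remains to show condition (a): $G$ has no pole in $\real{(s)} > 0$. I would argue by contradiction. If $s_0$ were such a pole, then from the Laurent expansion of $(sE-A)^{-1}$ near $s_0$ one obtains $G(s)+G(s)^* = \frac{1}{s-s_0}R + \frac{1}{\bar s - \bar s_0}R^* + (\text{bounded})$ for some nonzero residue $R$; choosing a direction of approach $s \to s_0$ along which the singular part is not Hermitian PSD then contradicts the bound derived in the previous paragraph. An equivalent algebraic route is to take $x$ to be an eigenvector of $zE-A$ at such a $\lambda$ and apply only the $(1,1)$ principal sub-block of the LMI, which yields $2\real{(\lambda)}\, x^* X^\top E x \leq 0$ and hence $X^\top E x = 0$; iterating this down a Jordan chain and combining with the full LMI must force a contradiction with the regularity of $(E,A)$.

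The main obstacle will be this pole-exclusion step. Because the realization $(E,A,B,C,D)$ is not assumed minimal, a finite eigenvalue of $zE-A$ lying in the open RHP need not be a pole of $G$, so one must carefully distinguish true poles from cancellable eigenvalues; this may require either a deflation to a minimal realization with a transported LMI certificate, or a direct residue computation at the putative pole. The PSD calculation in the second paragraph, by contrast, is purely algebraic and should pose no real difficulty.
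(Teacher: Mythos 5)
The paper does not actually prove this statement: it is quoted verbatim from \cite{FreJ04} (Theorem~3.1 there) and used as a black box, so there is no in-paper proof to compare against. Judged on its own merits, your KYP-style argument is the standard one and is essentially correct. The quadratic-form computation in your second paragraph is right: with $x=(sE-A)^{-1}Bw$ the identity $Ax=sEx-Bw$ together with $E^\top X=X^\top E$ collapses the LMI evaluated at $(x,w)$ to
\[
2\real{(s)}\,x^*X^\top Ex \;\leq\; w^*\bigl(G(s)+G(s)^*\bigr)w,
\]
and $E^\top X=X^\top E\succeq 0$ with $\real{(s)}>0$ gives condition (b) wherever $sE-A$ is invertible. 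For condition (a), your first route (Laurent expansion plus the classical fact that a rational function whose real part is nonnegative on a punctured neighbourhood of a point cannot have a pole there, applied to $w^*G(s)w$ and then polarized) is the correct way to finish. Your second, ``equivalent algebraic route'' via eigenvectors and Jordan chains of $zE-A$ is a dead end, and for exactly the reason you flag yourself: the LMIs can hold while the pencil has eigenvalues in the open right half-plane (take $E=I$, $A=\mathrm{diag}(1,-1)$, $B=C^\top=(0,1)^\top$, $D=1$, $X=\mathrm{diag}(0,1)$), because such eigenvalues are uncontrollable or unobservable and cancel in $G$. So there is no contradiction with regularity to be derived there; the pole exclusion must go through the transfer function, not the pencil. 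With that one alternative discarded, the proposal is a complete and correct proof strategy.
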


The converse of Theorem~\ref{thm:suff_PR} is true with some additional assumptions.
In fact, the positive real lemma for standard systems~\cite{AndV73} proves that
if a system is PR and minimal, then a solution to the LMIs~\eqref{eq:LMI1} is also necessary.
Similarly, with an additional condition, the positive real lemma for descriptor systems~\cite{FreJ04} proves that  the existence of a solution to the LMIs~\eqref{eq:LMI1} is also necessary for positive realness.

Theorem~\ref{thm:suff_PR} gives an alternative way, compared to the one described in~\eqref{HHyu}, 
to show that every PH system is positive real by providing an explicit solution $X$ to~\eqref{eq:LMI1}~\cite[Theorem 3.2]{gillis2018finding}. Similarly, if the LMIs~\eqref{eq:LMI1} have an invertible solution $X$, then the system $(E,A,B,C,D)$ can be written as a PH system~\cite[Theorem 3.6]{gillis2018finding}.

In the following, a necessary and sufficient condition for a system in the form~\eqref{eq2:sys}
to be ESPR is obtained in terms of the existence of a solution of the LMIs~\eqref{eq:LMI1}. 

\begin{theorem}[\cite{ZhaLX02}, Theorem~2] \label{thm:espr_equi}
Let $(E,A,B,C,D)$ define a system~\eqref{eq2:sys}. Then
it is admissible, ESPR and satisfies $D+D^\top  \succ 0$
if and only if
there exists a solution $X$ to the LMIs
\begin{equation}\label{eq:LMI2}
\mat{cc} A^\top X +X^\top  A & X^\top B-C^\top  \\B^\top X-C & -D-D^\top  \rix \prec 0
\quad \text{and} \quad E^\top X=X^\top E \succeq 0.
\end{equation}
\end{theorem}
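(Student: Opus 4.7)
The plan is to establish both directions of this KYP-type equivalence, with sufficiency following from direct manipulations of the LMI~\eqref{eq:LMI2} and necessity requiring a spectral-factorization construction, which is the main obstacle.

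For the sufficiency direction, I would proceed in three steps. First, the $(2,2)$-block of the LMI immediately yields $D + D^\top \succ 0$. Second, testing~\eqref{eq:LMI2} against vectors of the form $\mat{c} x \\ 0 \rix$ with $x \neq 0$ yields $A^\top X + X^\top A \prec 0$; this forces $X$ to be nonsingular, since $Xv = 0$ would give $v^\top(A^\top X + X^\top A)v = 0$, contradicting strict negative definiteness. Combined with the hypothesis $E^\top X = X^\top E \succeq 0$, Theorem~\ref{thm:impulsefree_reference} then delivers admissibility of $(E,A)$. Third, for the SPR part, I would apply the classical KYP substitution: for a real $w$ and any nonzero $u$, set $\xi := (jwE-A)^{-1}Bu$ (well defined by admissibility, which rules out purely imaginary finite eigenvalues) and evaluate the quadratic form of the LMI matrix on $\mat{c} \xi \\ u \rix$. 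Using $(jwE-A)\xi = Bu$ together with the symmetry $E^\top X = X^\top E$, the cross-terms carrying the factor $jw$ cancel, and the whole expression collapses to $-u^{*}(G(jw)+G(jw)^{*})u$. Strict negativity of~\eqref{eq:LMI2} therefore gives $G(jw)+G(jw)^{*} \succ 0$ for every real $w$, which is SPR. The extended condition $G(j\infty)+G(j\infty)^{*} \succ 0$ follows by a limit argument in~\eqref{eq:series_expan}, where admissibility (index at most one) together with $D+D^\top \succ 0$ ensures the limit exists and inherits positive definiteness.

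The main obstacle is the necessity direction, where one must construct $X$ satisfying~\eqref{eq:LMI2} from the ESPR and admissibility hypotheses. Here I would follow the descriptor-system positive real lemma approach. Set $\Phi(s) := G(s) + G(-s)^\top$, which by ESPR and $D+D^\top \succ 0$ is Hermitian positive definite on the extended imaginary axis. A spectral factorization $\Phi(s) = M(-s)^\top M(s)$ then exists, where $M$ is biproper and has neither poles nor zeros in the closed right half-plane. From such an $M$, one recovers $X$ as the stabilizing solution of a generalized algebraic Riccati equation associated with $(E,A,B,C,D)$, and this $X$ satisfies~\eqref{eq:LMI2}. Making the construction rigorous in the descriptor setting is delicate: the behaviour of $G$ at infinity must be tracked carefully, and it is precisely the combination of $D + D^\top \succ 0$, admissibility, and index one that guarantees the Riccati equation is well-posed and admits the stabilizing solution. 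For the full technical details of this step I would defer to~\cite{ZhaLX02}.
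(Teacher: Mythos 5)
The paper does not actually prove this statement: it is imported verbatim as \cite{ZhaLX02}, Theorem~2, so there is no internal proof to compare yours against. Judged on its own merits, your sufficiency direction is essentially complete and correct. The $(2,2)$ block of \eqref{eq:LMI2} gives $D+D^\top \succ 0$; restricting the quadratic form to vectors with zero second block gives $A^\top X + X^\top A \prec 0$, which forces $X$ nonsingular (if $Xv=0$ then $v^\top(A^\top X + X^\top A)v = (Av)^\top(Xv)+(Xv)^\top(Av)=0$), and then Theorem~\ref{thm:impulsefree_reference} with $V=X$ delivers admissibility. The KYP substitution $\xi=(jwE-A)^{-1}Bu$ does collapse the quadratic form to $-u^*\bigl(G(jw)+G(jw)^*\bigr)u$: the terms carrying $jw$ cancel precisely because $E^\top X = X^\top E$ is real symmetric, and the remaining cross terms reassemble into $C\xi=(G(jw)-D)u$. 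The one loose end is the point at infinity: the limit of positive definite matrices is in general only positive semidefinite, so the series expansion \eqref{eq:series_expan} alone does not give $G(j\infty)+G(j\infty)^*\succ 0$. The clean fix is to observe that for an index-at-most-one admissible pencil $(jwE-A)^{-1}B$ has a finite limit $\xi_\infty$ as $w\to\infty$, so the same quadratic-form identity holds in the limit and the \emph{strict} negativity of the constant LMI matrix (not of the limiting values) yields $G(j\infty)+G(j\infty)^*\succ 0$ directly.

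The genuine gap is the necessity direction, which you do not prove: you sketch the spectral-factorization and generalized-Riccati route and then explicitly defer the construction of $X$ to \cite{ZhaLX02}. That construction is the substantive content of the theorem in the descriptor setting --- one must produce from the frequency-domain ESPR condition a matrix $X$ that is not only a stabilizing Riccati solution but also satisfies the coupling condition $E^\top X = X^\top E \succeq 0$, and tracking the behaviour at infinity through the factorization is exactly where the difficulty sits. As written, your proposal is a correct proof of one implication plus a citation for the other; that happens to be no less than what these notes themselves provide (they cite the whole equivalence), but it should not be presented as a self-contained proof of the ``if and only if''.
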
	

Theorem~\ref{thm:espr_equi} will be used to  characterize the set of all admissible ESPR systems in terms of PH systems. For this, let us define the \emph{PH-form} for a system~\eqref{eq1:sys}.

\begin{definition}{\rm
A system $(E,A,B,C,D)$ is said to admit a \emph{port-Hamiltonian form (PH-form)} if
there exists a PH system as defined in~\eqref{eq:phsystem} such that
\[
A=(J-R)Q,\quad B=F-P, \quad C=(F+P)^\top Q, \quad \text{and}\quad D=S+N ,  
\] 
see page~\pageref{eq:phsystem} for the constraints on these matrices. 
} 
\end{definition}

In the following, we state and prove several equivalent characterizations of a system to be admissible and ESPR. 

\begin{theorem}\label{thm:mainprequi}
Let $\Sigma=(E,A,B,C,D)$ be a system in the form~\eqref{eq2:sys}. Then the following are equivalent.
\begin{enumerate}
\item[(i)]  $\Sigma$ is admissible and ESPR with $D+D^\top  \succ 0$.
\item [(ii)]There exists a solution $X$ to the LMIs~\eqref{eq:LMI2}.
\item [(iii)] $\Sigma$ admits a  PH-form with positive definite cost matrix $K = \mat{cc}  R & P \\ P^\top & S  
\rix$. 
\end{enumerate}
\end{theorem}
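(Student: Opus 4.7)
The plan is to establish (i) $\iff$ (ii) by directly quoting Theorem~\ref{thm:espr_equi}, reducing all the real work to the equivalence (ii) $\iff$ (iii). This detour through the LMI is convenient because both (ii) and (iii) are algebraic statements of the same flavor, whereas (i) is spectral.

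For (iii) $\Rightarrow$ (ii), the natural candidate is $X := Q$. Using $A = (J-R)Q$ with $J^\top = -J$, the block $A^\top X + X^\top A = Q^\top\big((J-R)^\top + (J-R)\big)Q = -2Q^\top R Q$. A similar direct computation gives $X^\top B - C^\top = Q^\top(F-P) - Q^\top(F+P) = -2 Q^\top P$, and $-D - D^\top = -2S$ since $N^\top = -N$. Collecting these,
\[
\begin{pmatrix} A^\top X + X^\top A & X^\top B - C^\top \\ B^\top X - C & -D-D^\top \end{pmatrix}
= -2 \begin{pmatrix} Q^\top & 0 \\ 0 & I\end{pmatrix} K \begin{pmatrix} Q & 0 \\ 0 & I\end{pmatrix},
\]
which is $\prec 0$ since $Q$ is invertible and $K \succ 0$. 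The condition $E^\top X = X^\top E \succeq 0$ is immediate from the PH definition.

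For (ii) $\Rightarrow$ (iii), the first step is to prove that any solution $X$ to~\eqref{eq:LMI2} must be invertible. Indeed, the strict inequality on the full block matrix forces its $(1,1)$ block $A^\top X + X^\top A$ to be $\prec 0$, and if $Xv = 0$ for some $v \ne 0$, then $v^\top(A^\top X + X^\top A)v = 0$, a contradiction. With $X$ invertible, I perform a congruence by $\mathrm{diag}(X^{-1}, I)$ and define
\[
Q := X, \;\; J := \tfrac{1}{2}\bigl(AX^{-1} - X^{-\top}A^\top\bigr), \;\; R := -\tfrac{1}{2}\bigl(AX^{-1} + X^{-\top}A^\top\bigr),
\]
\[
F := \tfrac{1}{2}\bigl(X^{-\top}C^\top + B\bigr), \;\; P := \tfrac{1}{2}\bigl(X^{-\top}C^\top - B\bigr), \;\; S := \tfrac{1}{2}(D+D^\top), \;\; N := \tfrac{1}{2}(D-D^\top).
\]
Direct verification (using $J - R = AX^{-1}$ and $F + P = X^{-\top}C^\top$) yields $A = (J-R)Q$, $B = F - P$, $C = (F+P)^\top Q$, $D = S+N$, with the required skew/symmetric structure of $J$, $R$, $N$, $S$. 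The congruence transforms the LMI~\eqref{eq:LMI2} into exactly $-2K \prec 0$, i.e.\ $K \succ 0$, and the second LMI gives $Q^\top E = X^\top E = E^\top X = E^\top Q \succeq 0$.

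The main obstacle is the invertibility of $X$: for descriptor systems with singular $E$, one might a priori expect only singular solutions of the Lyapunov-type LMI to exist, which would block the inversion $Q^{-1} = X^{-1}$ needed to extract the PH parameters. The argument above turns the strictness of the $(1,1)$ block of~\eqref{eq:LMI2} into automatic invertibility, so no additional regularity or minimality assumption on $(E,A,B,C,D)$ is needed; this is the key reason the equivalence is clean. Once this is handled, each of the remaining verifications is a routine block-matrix calculation.
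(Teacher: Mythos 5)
Your proof is correct and follows essentially the same route as the paper's: (i) $\Leftrightarrow$ (ii) via Theorem~\ref{thm:espr_equi}, the choice $X=Q$ with the congruence $\mathrm{diag}(Q^\top,I)\,K\,\mathrm{diag}(Q,I)$ for (iii) $\Rightarrow$ (ii), and invertibility of $X$ from the strict $(1,1)$ block followed by the same explicit construction of $(J,R,Q,F,P,S,N)$ for (ii) $\Rightarrow$ (iii). Your placement of the transposes ($F+P = X^{-\top}C^\top$ so that $(F+P)^\top Q = C$ without assuming $X$ symmetric) is in fact the consistent version of the formulas in~\eqref{DHformcstr}.
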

\begin{proof} $(i) \Longleftrightarrow (ii)$ follows from Theorem~\ref{thm:espr_equi}. 

$(iii)\Longrightarrow (ii)$.~
Suppose  $\Sigma$ admits a PH-form  with positive definite cost matrix, and let \mbox{$A=(J-R)Q$}, $B=F-P$, $C=(F+P)^\top Q$ and $D=S+N$, where $J^\top =-J$,
$N^\top =-N$, $K=\mat{cc}R & P\\P^\top  & S \rix \succ 0$, $Q$ is invertible and $E^\top Q=Q^\top E \succeq 0$. Then $X=Q$ satisfies the LMIs in~\eqref{eq:LMI2}.  In fact, we have
\begin{align*}
&\mat{cc} A^\top Q +Q^\top  A & Q^\top B-C^\top  \\B^\top Q-C & -D-D^\top  \rix\\
&=\mat{cc} ((J-R)Q)^\top Q +Q^\top  (J-R)Q & Q^\top (F-P)-((F+P)^\top Q)^\top  \\(F-P)^\top Q-(F+P)^\top Q & -(S+N)-(S+N)^\top  \rix\\
&= -2 \mat{cc} Q^\top RQ & Q^\top P \\P^\top Q & S \rix = -2\mat{cc}Q^\top  &0\\0 &I_m\rix
\mat{cc} R & P \\P^\top  & S \rix
\mat{cc}Q & 0\\0 &I_m\rix \prec 0,
\end{align*}
because $K \succ 0$ and $Q$ is invertible.

$(ii)\Longrightarrow (iii)$.~ Suppose there exists a solution $X$ to the LMIs~\eqref{eq:LMI2}. This implies that $A^\top X +X^\top  A \prec 0$, and therefore $X$ is invertible. Define
\begin{eqnarray}
 &J:=\frac{AX^{-1}-(AX^{-1})^\top }{2}, \quad R:=-\frac{AX^{-1}+(AX^{-1})^\top }{2}, \quad Q:=X, \quad
S:=\frac{1}{2}(D+D^\top ), \nonumber \\
& N:=\frac{1}{2}(D-D^\top ),\quad F:=\frac{1}{2}(B+X^{-1}C^\top ), \quad \text{and}\quad
P:=\frac{1}{2}(-B+X^{-1}C^\top ). \label{DHformcstr}  
\end{eqnarray} 
 Let us show that the matrices $J,R,Q,F,P,N$ and $S$ provide a PH-form for $\Sigma$. We have
\[
(J-R)Q=A,\quad F-P=B, \quad (F+P)^\top Q =C,\quad \text{and}\quad S+N=D.
\]
Further, we have that $E^\top Q \succeq 0$ (using the second LMI in~\eqref{eq:LMI2}), $J^\top =-J$, $N^\top =-N$, and
\begin{eqnarray*}
K&=&\mat{cc}R &P\\P^\top  & S \rix = -\frac{1}{2}\mat{cc} AX^{-1}+X^{-1}A^\top  & -B+X^{-1}C^\top  \\ -B^\top +CX^{-1} & -D-D^\top  \rix \\
&=& -\frac{1}{2}\mat{cc} -X^{-1} & 0 \\0& I_m \rix
\mat{cc} A^\top X +X A & XB-C^\top  \\B^\top X-C & -D-D^\top  \rix
\mat{cc} -X^{-1} & 0 \\0& I_m \rix
\succ 0,
\end{eqnarray*}
which follows from the first LMI in~\eqref{eq:LMI2}.
\end{proof}

We reformulate the nearest ESPR system problem $(\mathcal P_e)$ using the
PH-form for an admissible ESPR system
$(E,A,B,C,D)$ with $D+D^\top  \succ 0$.
For a standard ESPR system $(I_n,A,B,C,D)$ we have that $D+D^\top  \succ 0$,
thus the condition $D+D^\top  \succ 0$ for standard systems is redundant.
However, the PH-form characterization of an admissible ESPR descriptor system depends on the existence of a solution of the LMIs~\eqref{eq:LMI2} when $D+D^\top  \succ 0$.
This justifies the restriction $D+D^\top  \succ 0$ on defining the set $\mathbb S_e$ for the nearest ESPR system problem
in Section~\ref{sec:theproblem}.
Let us define the following two sets:
\begin{itemize}
\item The set $\mathbb S_{PH}$
containing all systems $(E,A,B,C,D)$ in PH-form, that is,
\begin{eqnarray*}
\mathbb S_{PH} &:=& \left\{(E,A,B,C,D) \; | \ (E,A,B,C,D) \text{ admits a PH-form}\right\} \\
&=& \Big\{ (E,(J-R)Q,F-P,(F+P)^\top Q,S+N) \; \Big|  \;
J^\top =-J, N^\top =-N, \\
&& \hspace{3cm} E^\top Q \succeq 0, Q \text{ invertible},
K=\mat{cc}R &P \\
P^\top  & S \rix \succeq 0 \Big\}.
\end{eqnarray*}

\item The set $S_{PH}^{\succ 0} \subset \mathbb S_{PH}$
containing all systems $(E,A,B,C,D)$ in strict PH-form, that is,
\begin{eqnarray*}
   \mathbb S_{PH}^{\succ 0}  := \Big\{(E,(J-R)Q,F-P,(F+P)^\top Q,S+N) \in \mathbb S_{PH}
	\; \Big| \; K 
	\succ 0 \Big\}.
\end{eqnarray*}
By Theorem~\ref{thm:mainprequi}, $\mathbb S_e =  \mathbb S_{PH}^{\succ 0}$.
\end{itemize}

The sets $\mathbb S_{PH}$ and $\mathbb S_{PH}^{\succ 0}$ are neither closed (due to the constraint that $Q$ is invertible)
nor open (due to the constraint $E^\top Q \succeq 0$). This gives another way to see that the set $\mathbb S_e$ of all
ESPR systems is neither open nor closed. 
Consider the closure $\overline{\mathbb S_{PH}}$ of $\mathbb S_{PH}$, which is equal to the set $\mathbb S_{PH}$ except that $Q$ can be singular. Moreover, we have that $\overline{\mathbb S_{PH}} = \overline{\mathbb S_{PH}^{\succ 0}}$.
Therefore the values of the infimum over the sets $\mathbb S_{PH}$, $\mathbb S_{PH}^{\succ 0}$, and
$\overline{\mathbb S_{PH}}$ are the same. We have the following result.
\begin{theorem}\label{thm:reform_in_ph}
Let $(E,A,B,C,D)$ be a system in the form~\eqref{eq1:sys} and $\mathcal{F}$ be defined as in~\eqref{eq:def_F}.
Then
\begin{equation} \label{eq:reform_in_ph}
\inf_{ (M,(J-R)Q,F-P,(F+P)^\top Q,S+N) \in \overline{\mathbb S_{PH}} }  \quad
\mathcal{F}((J-R)Q,F-P,(F+P)^\top Q,S+N,M)
\end{equation}
coincides with the infimum of $(\mathcal P_e)$  while it is
is an upper bound for the infimum of $(\mathcal P)$.
\end{theorem}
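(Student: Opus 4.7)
The plan is to combine three ingredients that have already been established:
(a) the identity $\mathbb S_e = \mathbb S_{PH}^{\succ 0}$ from Theorem~\ref{thm:mainprequi};
(b) the closure identity $\overline{\mathbb S_{PH}} = \overline{\mathbb S_{PH}^{\succ 0}}$ noted just before the theorem;
and (c) the fact that $\mathcal F$, regarded as a function of the parameter tuple $(J,R,Q,F,P,S,N,E)$, is a polynomial and hence continuous. Throughout, I interpret the infimum in~\eqref{eq:reform_in_ph} as being taken over parameter tuples, with the understanding that the map from parameters to the system $(E,(J-R)Q,F-P,(F+P)^\top Q,S+N)$ is continuous, so infima computed at either level agree.

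For the ESPR equality, the chain of identities is
\[
\inf_{\overline{\mathbb S_{PH}}} \mathcal F
\;=\; \inf_{\overline{\mathbb S_{PH}^{\succ 0}}} \mathcal F
\;=\; \inf_{\mathbb S_{PH}^{\succ 0}} \mathcal F
\;=\; \inf_{\mathbb S_e} \mathcal F ,
\]
where the first equality is (b), the last is (a), and the middle one follows from (c) by a standard density argument. For the middle step I would observe that, given any target tuple in $\overline{\mathbb S_{PH}^{\succ 0}}$, one can construct an approximating sequence in $\mathbb S_{PH}^{\succ 0}$; the corresponding values of $\mathcal F$ converge, so the infimum cannot decrease upon taking the closure.

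For the upper bound on $(\mathcal P)$, I would use that every PH system is PR. This follows from Theorem~\ref{thm:suff_PR} with the explicit Lyapunov certificate $X=Q$ computed in the course of proving $(iii)\Rightarrow(ii)$ in Theorem~\ref{thm:mainprequi}, which gives $\mathbb S_{PH} \subseteq \mathbb S$. Consequently,
\[
\inf_{\overline{\mathbb S_{PH}}} \mathcal F
\;=\; \inf_{\mathbb S_{PH}} \mathcal F
\;\geq\; \inf_{\mathbb S} \mathcal F ,
\]
where the first equality is again density plus continuity, and the inequality is set inclusion. The combination of the two displays gives exactly the two claims of the theorem.

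The only mildly delicate point, and what I would treat most carefully, is the closure/density step. On the boundary of $\mathbb S_{PH}$ one may have $Q$ singular or $K$ only positive semidefinite, and such tuples need not correspond to PR systems; nevertheless, because $\mathcal F$ depends continuously (polynomially) on the parameters, the infimum over $\overline{\mathbb S_{PH}}$ coincides with the infimum over its dense subset $\mathbb S_{PH}^{\succ 0}$. The natural approximation is $Q_\varepsilon = Q+\varepsilon I_n$ (invertible for small $\varepsilon$ while preserving $E^\top Q \succeq 0$ in the limit) together with $K_\varepsilon = K + \varepsilon I_{n+m} \succ 0$, which stays in $\mathbb S_{PH}^{\succ 0}$ and converges to the boundary point as $\varepsilon \to 0$.
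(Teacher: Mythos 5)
Your overall chain of reasoning is the same as the paper's: its proof of this theorem is a single line invoking $\mathbb S_e = \mathbb S_{PH}^{\succ 0}$ and $\mathbb S_e \subseteq \mathbb S$, with the closure facts ($\overline{\mathbb S_{PH}} = \overline{\mathbb S_{PH}^{\succ 0}}$ and the equality of the infima over $\mathbb S_{PH}$, $\mathbb S_{PH}^{\succ 0}$ and $\overline{\mathbb S_{PH}}$) taken from the remark immediately preceding the theorem, exactly as in your ingredients (a)--(c). However, the one step you single out as delicate and ``treat most carefully'' is the one that fails as written. The perturbation $Q_\varepsilon = Q + \varepsilon I_n$ does not in general stay in $\mathbb S_{PH}^{\succ 0}$: membership requires the coupling constraint $M^\top Q_\varepsilon = Q_\varepsilon^\top M \succeq 0$ for each $\varepsilon>0$, whereas $M^\top (Q+\varepsilon I_n) = M^\top Q + \varepsilon M^\top$ is not even symmetric unless $M$ is. (In addition, $Q+\varepsilon I_n$ can be singular for isolated small $\varepsilon>0$ when $Q$ has negative real eigenvalues, though that is repaired by choosing $\varepsilon$ along a suitable sequence.) A correct density argument must perturb so as to respect the constraint $M^\top Q = Q^\top M \succeq 0$, e.g.\ by moving $M$ and $Q$ jointly. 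Since the paper itself only asserts the closure identity in the preceding remark, you are entitled to cite it as you do in ingredient (b) — but then you should not present a broken explicit construction as its justification.

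A second, smaller point concerns the upper bound for $(\mathcal P)$, where you route through ``every PH system is PR'', i.e.\ $\mathbb S_{PH} \subseteq \mathbb S$. Theorem~\ref{thm:suff_PR} is stated for \emph{regular} systems, and a PH system with invertible $Q$ need not be regular (cf.\ Example~\ref{ex:ex2}), so this inclusion needs a caveat; positive realness is only defined for regular systems in the first place. The detour is also unnecessary: once the first part gives $\inf_{\overline{\mathbb S_{PH}}}\mathcal F = \inf_{\mathbb S_e}\mathcal F$, the inequality follows immediately from $\mathbb S_e \subseteq \mathbb S$ (ESPR implies PR), which is precisely the route the paper takes.
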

\begin{proof}
 This follows directly from the fact that $\mathbb S_e =  \mathbb S_{PH}^{\succ 0}$ and $\mathbb S_e \subseteq \mathbb S$.
\end{proof}
We will refer to~\eqref{eq:reform_in_ph} as the \emph{nearest PH system problem}.
The same result holds for the variants of $(\mathcal P)$ and $(\mathcal P_e)$ for standard systems
since the only difference is that $M$ is imposed to be equal to $E=I_n$.

Although the value of the infimum in~\eqref{eq:reform_in_ph} coincides with the infimum of $(\mathcal P_e)$,
the solution of~\eqref{eq:reform_in_ph} may not solve the problem $(\mathcal P_e)$,
as the solution found may not even be PR since $\overline{\mathbb S_{PH}}$ could contain systems which are not regular.
To rule out such situations,
one can impose the matrix $R$ to satisfy $R\succeq \delta I_n$ for some fixed
small $\delta >0$, because in this case $(E,(J-R)Q)$ is a DH matrix pair with positive definite
$R$ and therefore the system is guaranteed to be regular by Corollary~\ref{cor:DJ_regular}.
This does not complicate the problem as
the projection is still straightforward but gives a nearby regular
descriptor PH system (hence a PR system, see~\cite[Theorem 3.2]{gillis2018finding})
to a given system.

In view of~\eqref{eq:reform_in_ph}, solving problem ($\mathcal P_e$) is equivalent to solving the nearest PH system problem. We briefly explain this separately for standard systems when $E=I_n$ and $E$ is not subject to perturbation
and for general systems when $E$ is subject to perturbation.

\subsubsection{Standard systems} \label{stansys}

For standard systems, we have $M=E=I_n$ and thus~\eqref{eq:reform_in_ph} can be simplified as follows
\begin{align}
\inf_{J,R,Q,F,P,S}
& \mathcal{G}(J,R,Q,F,P,S) \quad \text{ such that }
\quad J^\top =-J, Q\succeq 0 \text{ and } \mat{cc}R &P\\P^\top &S\rix\succeq 0 ,  \label{def:dist_sph_2}
\end{align}
where
\begin{align*}
\mathcal{G}(J,R,Q,F,P,S)
& = {\|A-(J-R)Q\|}_F^2 + {\|B-(F-P)\|}_F^2 \\
& \quad \quad + {\|C-(F+P)^\top Q\|}_F^2+{\left\|\frac{D+D^\top }{2}-S\right\|}_F^2, 
\end{align*}
since the optimal $N$ in~\eqref{eq:reform_in_ph} is given by $\frac{D-D^\top }{2}$ as $S$ is symmetric.

\subsubsection{General systems}  \label{sec:gen_algo}

Similarly as for standard systems in~\eqref{def:dist_sph_2}, \eqref{eq:reform_in_ph} can be simplified to
\begin{align}
\inf_{J,R,Q,M,F,P,S} & \mathcal{G}(J,R,Q,F,P,S) + {\|E - M\|}_F^2 \label{def:dist_dph_1} \\
& \quad \text{ such that } J^\top =-J,
M^\top  Q \succeq 0
\text{ and }
\mat{cc}R &P\\P^\top &S\rix\succeq 0 . \nonumber
\end{align}
As opposed to~\eqref{def:dist_sph_2}, it is difficult to project on the feasible domain of~\eqref{def:dist_dph_1}
because of the coupling constraint $M^\top  Q \succeq 0$. Moreover, this constraint was observed to get standard optimization schemes stuck in suboptimal solutions; see~\cite[Example 3]{gillis2018computing} for an example.
To overcome this issue, one can introduce a new variable   $Z = M^\top  Q$ so that $M^\top  = Z Q^{-1}$~\cite{gillis2018computing}.
This leads to a reformulation of~\eqref{def:dist_dph_1} into an equivalent optimization problem with a
simpler feasible set:
\begin{align}\label{eq:equivalent1}
\inf_{J,R,Q,Z,F,P,S} 
& \mathcal{G}(J,R,Q,F,P,S) + {\|E^\top  - Z Q^{-1} \|}_F^2 \\
& \text{ such that } J^\top =-J, Z \succeq 0 \text{ and } \mat{cc}R &P\\P^\top &S\rix\succeq 0 . \nonumber
\end{align}

\subsection{Optimization algorithms}\label{sec:proptim}

As it was done in Chapter~\ref{chap:contsys} to find the nearest stable matrix to an unstable one, methods like PGD (Algorithm~\ref{algo:gradient}) or FGM (Algorithm~\ref{algo:fgm}) can be used to estimate~\eqref{def:dist_sph_2} and~\eqref{eq:equivalent1}, see~\cite{gillis2018computing} for more details.

\subsubsection{Initialization}\label{sechap4:init}

The simplified optimization problems~\eqref{def:dist_sph_2} or~\eqref{eq:equivalent1} are nonconvex. This makes choosing good initial points crucial to obtain good solutions. 

\paragraph{Identity initialization}
The identity initialization uses $Q = I_n$ and $P = 0$. For these values of $Q$ and $P$, the optimal solutions for the other variables can be computed explicitly:
\[
J = \big(A-A^\top \big)/2,
R = \mathcal{P}_{\succeq} \big((-A-A^\top )/2\big),
S = \mathcal{P}_{\succeq} \big((D^\top +D)/2\big),
F = \big(B+C^\top \big)/2,
\]
and $Z = \mathcal{P}_{\succeq }(E^\top )$ for general systems. 
This initialization has the advantage of being very simple to compute while working reasonably well in many cases;
see~\cite{gillis2018finding} for numerical experiments.

\paragraph{LMI-based initializations}

Given a system that does not admit a PH-form, the LMIs~\eqref{eq:LMI1} will not have a solution. However, since we are looking for a nearby system that will admit a solution to these LMIs, it makes sense to find a solution $X$ to nearby LMIs.
We propose the following to relax the LMIs~\eqref{eq:LMI1}:
\begin{align}
\min_{\delta, X} & \quad \delta^2 \nonumber \\
\text{ such that } & \quad
\mat{cc} -A^\top X -X^\top  A & C^\top -X^\top B \\ C-B^\top X & D+D^\top  \rix + \delta I_{n+m} \succeq 0,  \label{initLMIs} \\
& \quad  E^\top  X  + \delta I_n \succeq 0. \nonumber
\end{align}
Let us denote $(\widehat \delta,\widehat X)$ an optimal solution of~\eqref{initLMIs}.
If $\widehat \delta = 0$ and $\widehat X$ is invertible,
then the system $(E,A,B,C,D)$ admits a PH-form; see~\cite[Theorem 3.6]{gillis2018finding}. 
Moreover, as long as $\widehat X$ is invertible, the matrices $(J,R,Q,S,N,P,Z)$ can be constructed using~\eqref{DHformcstr}
and projected onto the feasible set $\overline{\mathbb S_{PH}}$ to obtain an initial system in PH-form. 

If one wants to obtain a better initial point, given $Q = \widehat X$, it is possible to compute the matrices $(J,R,S,N,P)$ by solving a semidefinite program (SDP):
\begin{equation}
\min_{J,R,S,N,P} \mathcal{G}(J,R,Q,F,P,S)  \quad \text{ such that } \quad  J^\top =-J \text{ and } \mat{cc}R &P\\P^\top &S\rix\succeq 0,
\label{initLMIs2}
\end{equation}
while taking $Z = \mathcal{P}_{\succeq }(E^\top Q)$ (as $Q=\widehat X$ can be ill-conditioned).

It is observed that the LMI-based initializations work well when the initial system is close to being passive (that is, when $\widehat \delta$ is small); otherwise, it may provide rather bad initial points;  see~\cite{gillis2018finding}  for some examples.
However, in most applications, the systems of interest are usually close to being passive (cf.~\@ Section~\ref{sec:bad2good}); hence these initializations may be particularly useful.  
An interesting direction of research would be to provide theoretical guarantee for a relaxation such as~\eqref{initLMIs} to recover a nearby passive system to a system which is close to being passive.

\subsection{Numerical Example on the AC7 system} \label{numexp}

The AC7 system is a standard system, with $E=I_n$. 
Since the dimension of $y$ (=2) is not equal to that of $u$ (=1), we artificially add an input of zero, adding a column of zeros to $B$ and $D$. Recall from Section~\ref{sec:defpass} that for passivity the vectors $u$ and $y$ need to be of same length. 
The original matrices $B$ and $C$ are given in Section~\ref{AC7:BC}, while $D = [0 \, 0; 0 \, 0]$. 

Imposing the passive system to remain standard (that is, $M = I_n$), we obtain, running our code with the default initialization (identity matrix), the following errors: 
\[
\frac{\| A  - (J-R) Q\|_F}{\| A \|_F} = 1.08\%,  \; 
\frac{\| B  - (F-P) \|_F}{\| B \|_F} = 0.22\%, 
\]
\[ 
\frac{\| C  -  (F+P)^\top Q \|_F}{\| C \|_F} = 60.08\%, \; 
{\| D  - (S+N) \|_F} = 1.08, 
\]
while the global error, 
$\mathcal{G}(J,R,Q,F,P,S)$, is 1.81. 

We can also approximate this system using a descriptor system, removing the constraint that $M=E$, for which our code provides a rather different solution, with errors 
$\| M-E \|_F / \| E\|_F = 34.91\%$, 
\[
\frac{\| A  - (J-R) Q\|_F}{\| A \|_F} = 1.64\%,  \; 
\frac{\| B  - (F-P) \|_F}{\| B \|_F} = 0.50\%, 
\]
\[ 
\frac{\| C  -  (F+P)^\top Q \|_F}{\| C \|_F} = 18.62\%, \; 
{\| D  - (S+N) \|_F} = 0.30. 
\]
Giving freedom in the variable $M$ allows to reduce significantly the approximation error for $C$ and $D$, and reduce the global error, from 1.81 to 1.72.

\section{Nearest stable matrix pairs} \label{sec:nearestdessys}

Recall that a system $\Sigma=(E,A,B,C,D)$ is called admissible if it is regular, asymptotically stable, and of index at most one. Since the admissibility of the system depends solely on the matrix pair $(E,A)$, with additional constraints  that $\tilde B=B$, $\tilde C=C$, and $\tilde D=D$, the problem $(\mathcal P_a)$ is equivalent to the \emph{nearest admissible matrix pair problem}; see page~\pageref{problem:Pa}. 
More precisely, we consider the following problem: 
\begin{problem}\label{prob:contstabpair}{\rm
For a given  pair $(E,A) \in \R^{n,n}\times \R^{n,n}$
find the nearest admissible matrix pair $(M,X)$. In other words,  if $\mathbb S$ is the set of
matrix pairs $(M,X)\in \R^{n,n}\times \R^{n,n}$ that are regular, of index at most one, and have all finite eigenvalues in the open left half plane,
then we wish to compute
\begin{equation*}
\inf_{(M,X) \in \mathbb S} \{{\|E-M\|}_F^2+{\|A-X\|}_F^2\}.
\end{equation*}
}
\end{problem}

Note that this problem is a special case of the nearest PR system problem discussed in the previous section, taking $B$, $C$ and $D$ as empty matrices. However, we provide in this section some additional insight on this case; in particular regarding the characterization of DH matrix pairs (Theorem~\ref{eq:main_result}). 

In~\cite{gillis2018computing}, the authors used the \emph{nearest stable matrix pair problem} to refer to the above problem.
This problem is the complementary problem to the distance to instability for matrix pairs; see~\cite{ByeN93} for complex pairs and~\cite{DuLM13} for a survey on this problem.
Since we require a stable pair to be regular, it also complements the distance to the nearest singular pencil, which is a long-standing open problem \cite{ByeHM98,GugLM16,MehMW15,PraS21a}. 
The nearest stable matrix pair problem occurs in system identification, where one needs to
identify a stable matrix pair depending on observations (see Section~\ref{sec:bad2good}).

 As demonstrated in~\cite{gillis2018computing}, the feasible set $\mathbb S$ is not open, not closed, non-bounded, and highly nonconvex, thus it is very difficult to work directly with the set $\mathbb S$. For this reason, we reformulate the nearest stable matrix pair problem into an equivalent optimization problem with a simpler feasible set using DH matrix pairs.

\subsection{Formulation using DH matrix pairs}

Let us recall the definition of DH matrix pairs from Section~\ref{subsec:dhprop}.
\begin{definition}
A matrix pair $(E,A)$, with $E,A \in \mathbb R^{n,n}$, is called a \emph{dissipative Hamiltonian (DH) matrix pair} if there exists an invertible matrix $Q\in \mathbb R^{n,n}$ such that
$Q^\top E=E^\top Q \succeq 0$, and $A$ can be expressed as $A=(J-R)Q$  with $J^\top =-J$, $R^\top =R  \succeq 0$.
\end{definition}
The matrix $R$ in a DH matrix pair $(E,(J-R)Q)$ is called the dissipation matrix. We have seen in Theorem~\ref{thm:stable_semidef_R} that every regular, index at most one DH matrix pair $(E,(J-R)Q)$ is stable. 
The additional constraint that the dissipation matrix $R$ is positive definite guarantees that the DH matrix pair is asymptotically stable, that is, regular, of index at most one, and has all finite eigenvalues in the open left half of the complex plane. The converse of this statement that every asymptotically stable pair $(E,A)$ is a DH matrix pair with positive definite dissipation matrix is also true. 

\begin{theorem}\label{eq:main_result}
Let $(E,A)$ be a matrix pair, where $E,A \in \R^{n,n}$. Then the following statements are equivalent.
\begin{enumerate}
\item [1)] $(E,A)$ is a DH matrix pair with positive definite dissipation matrix.
\item [2)] $(E,A)$ is regular, of index at most one, and asymptotically stable.
\end{enumerate}
\end{theorem}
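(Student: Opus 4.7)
The plan is to prove the two implications separately, leveraging the results already established in Section~\ref{subsec:dhprop} and Theorem~\ref{thm:impulsefree_reference}. For the direction $1) \Rightarrow 2)$, regularity is immediate from Corollary~\ref{cor:DJ_regular}, and asymptotic stability is a strengthening of Lemma~\ref{lem:DH_pair_prop}: the localization statement already forces all finite eigenvalues into the closed left half plane, and part~2) of that lemma says that any purely imaginary eigenvalue would produce a nonzero vector $x$ with $RQx=0$; since $R\succ 0$ and $Q$ is invertible, this is impossible, so in fact all finite eigenvalues lie in $\mathbb{C}_-$.

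The only non-routine point in $1) \Rightarrow 2)$ is the index bound, and this is what I expect to be the main obstacle. I would handle it by a congruence reduction followed by an explicit Kronecker normalization. Premultiplying the pencil $zE-(J-R)Q$ by $Q^\top$ produces the equivalent pencil $z\tilde E - (\tilde J - \tilde R)$, where $\tilde E := Q^\top E \succeq 0$, $\tilde J := Q^\top J Q = -\tilde J^\top$, and $\tilde R := Q^\top R Q \succ 0$. Choose an orthogonal $U$ diagonalizing $\tilde E$, so that $U^\top \tilde E U = \operatorname{diag}(D,0)$ with $D\succ 0$, and partition $U^\top \tilde J U$ and $U^\top \tilde R U$ conformably into $2\times 2$ blocks. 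The $(2,2)$-block $R_{22}$ of $\tilde R$ inherits positive definiteness, and the $(2,2)$-block of $U^\top(\tilde J - \tilde R)U$ equals $J_{22}-R_{22}$, whose symmetric part $-R_{22}$ is negative definite; hence $J_{22}-R_{22}$ is invertible. Using the invertibility of the $(2,2)$-block, one applies block row and column operations with $\bigl(\begin{smallmatrix}I & -(J_{12}-R_{12})(J_{22}-R_{22})^{-1} \\ 0 & I\end{smallmatrix}\bigr)$ on the left and a matching operation on the right to decouple the pencil into $\operatorname{diag}(zD - S,\; -(J_{22}-R_{22}))$. The first block carries only finite eigenvalues while the second contributes semisimple eigenvalues at infinity (the $E$-block is zero, the $A$-block invertible), so the index is at most one.

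For the reverse direction $2) \Rightarrow 1)$, I would invoke Theorem~\ref{thm:impulsefree_reference} to obtain an invertible $V\in\mathbb{R}^{n,n}$ with $V^\top A + A^\top V \prec 0$ and $E^\top V = V^\top E \succeq 0$. Setting $Q := V$, the constraints $Q^\top E = E^\top Q \succeq 0$ and $Q$ invertible are satisfied. Decomposing $AQ^{-1}$ into its skew-symmetric and symmetric parts, define
\[
J := \tfrac{1}{2}\bigl(AQ^{-1} - Q^{-\top}A^\top\bigr), \qquad R := -\tfrac{1}{2}\bigl(AQ^{-1} + Q^{-\top}A^\top\bigr),
\]
so that $J^\top=-J$, $R^\top = R$, and $A = (J-R)Q$ by construction. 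A congruence with $Q$ yields $Q^\top R Q = -\tfrac{1}{2}(Q^\top A + A^\top Q) \succ 0$ by the Lyapunov inequality for $V$, and invertibility of $Q$ then gives $R\succ 0$. This exhibits $(E,A)$ as a DH matrix pair with positive definite dissipation matrix and completes the equivalence.
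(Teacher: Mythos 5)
Your proof is correct, and your direction $2)\Rightarrow 1)$ coincides with the paper's (same appeal to Theorem~\ref{thm:impulsefree_reference}, same explicit $(J,R,Q)$, same congruence argument for $R\succ 0$), as does most of $1)\Rightarrow 2)$ (regularity from Corollary~\ref{cor:DJ_regular}, asymptotic stability from Lemma~\ref{lem:DH_pair_prop} plus $\operatorname{null}(R)=\{0\}$). The one place you genuinely diverge is the index bound. The paper invokes the rank criterion from~\cite{KauNC89}: with $U$ an orthonormal basis of $\operatorname{null}(E)$, the pair has index at most one iff $\operatorname{rank}([E \ (J-R)QU])=n$, and it derives a contradiction from a left null vector $x$ of that block matrix by showing $x=QUy$ and hence $x^HRx=0$. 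You instead build the quasi-Weierstrass form by hand: premultiply by $Q^\top$, orthogonally diagonalize $Q^\top E\succeq 0$, observe that the $(2,2)$-block of $Q^\top(J-R)Q$ restricted to $\operatorname{null}(Q^\top E)$ has negative definite symmetric part $-R_{22}$ (hence is invertible), and decouple via a block Schur-complement equivalence into $\operatorname{diag}(zD-S,\,-(J_{22}-R_{22}))$. Both arguments hinge on the same fact --- $R\succ 0$ forces invertibility of $J-R$ compressed to the kernel of $E$ --- but yours is self-contained (it uses only the paper's definition of index at most one, namely regularity with exactly $\operatorname{rank}(E)$ finite eigenvalues, rather than the external rank test) and exhibits the semisimple infinite eigenvalues explicitly, at the cost of being somewhat longer. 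The paper's version is shorter and stays entirely at the level of a single rank condition.
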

\begin{proof} $1)\Rightarrow 2)$ Let $(E,A)$ be a DH matrix pair with positive definite dissipation matrix, that is,
$A$ can be expressed as $A=(J-R)Q$ for~some $R\succ 0$, $J^\top =-J$, and nonsingular $Q$ with $Q^\top E \succeq 0$.
Clearly, by Corollary~\ref{cor:DJ_regular} $(E,(J-R)Q)$ is regular. Furthermore,  $(E,(J-R)Q)$ has all its finite eigenvalues in the open left half plane. To see this,
 let $\lambda \in \mathbb C$ be a finite  eigenvalue of the pencil $zE-(J-R)Q$. Then by
Lemma~\ref{lem:DH_pair_prop} it follows that $\real{(\lambda)} \leq 0$, and $\real{(\lambda)} =0$
if and only if there exists $x\neq 0$ such that $(\lambda E-JQ)x=0$ and $0\neq Qx \in \operatorname{null}(R)$. But
$\operatorname{null}(R) =\{0\}$ as $R \succ 0$. 

To show that $(E,(J-R)Q)$ is of index at most one, we
set $r:=\text{rank}(E)$ and assume that $U \in \R^{n,n-r}$ is an orthogonal matrix whose column spans $\text{null}(E)$.
Then, see~\cite{KauNC89}, $(E,(J-R)Q)$ is of index at most one if and only if $\text{rank}(\mat{cc}E&(J-R)QU\rix)=n$.
Suppose that $x \in \C^n\in \setminus \{0\}$ is such that $x^H\mat{cc}E&(J-R)QU\rix =0$. Then we have the two conditions
\begin{equation}\label{eq:equivalent_proof_1}
x^HE=0,\ x^H(J-R)QU=0.
\end{equation}
Since $Q$ is invertible, we have $x^HEQ^{-1}=0$ and hence
$(EQ^{-1})x=0$ because $EQ^{-1} \succeq 0$ as $E^\top Q \succeq 0$. This shows that $Q^{-1}x \in \text{null}(E)$,
and thus there exists $y \in \C^{n-r}$ such that $Q^{-1}x=Uy$, or, equivalently $x=QUy$.
Using this in~\eqref{eq:equivalent_proof_1}, we obtain that $x^H(J-R)x=0$. This implies that
$x^HJx=0$ and $x^HRx=0$ as $J$ is skew-symmetric and $R$ is symmetric. But this is a contradiction to the assumption that $R \succ 0$. This completes the proof of $1)\Rightarrow 2)$.

$2)\Rightarrow 1)$ Consider a  pair $(E,A)$, with $E,A\in \mathbb R^{n,n}$, that is regular, asymptotically stable, and of index at most one. Then by
Theorem~\ref{thm:impulsefree_reference}, there exist an nonsingular $V\in \mathbb R^{n,n}$ such that
$V^\top A+A^\top V \prec 0$ and $E^\top V=V^\top E \succeq 0$.
Setting
\begin{equation} \label{DHexplicit}
Q=V,\quad J=\frac{(AV^{-1})-(AV^{-1})^\top}{2},\quad \text{and}\quad R=-\frac{(AV^{-1})+(AV^{-1})^\top}{2},
\end{equation}
we have $J^\top = -J$, $E^\top Q = Q^\top E \succeq 0$, and $R \succ 0$, as $V$ is invertible. Applying the Lyapunov inequality
\[
V^\top RV=-\frac{V^\top((AV^{-1})+(AV^{-1})^\top)V}{2}=-\frac{V^\top A+A^\top V}{2} \succ 0,
\]
the assertion follows.
\end{proof}
An important consequence of the proof of Theorem~\ref{eq:main_result} is an \emph{explicit construction} of the DH characterization of a matrix pair $(E,A)$:
(i)~solve the LMIs~\eqref{LMIcharact} (if the LMIs do not admit a solution, the pair is not regular, of index at most one, and asymptotically stable),
and (ii)~use~\eqref{DHexplicit} to construct $(J,R,Q)$. 

By Theorem~\ref{eq:main_result}, the set $\mathbb S$ of all asymptotically stable matrix pairs can be expressed as the set of all DH matrix pairs with positive definite dissipation, that is,
\begin{eqnarray*}
\mathbb S\hspace{-.2cm}&=& \hspace{-.2cm}\left\{(M,(J-R)Q)\in \R^{n,n}\times \R^{n,n}\,:J^\top =-J, R\succ 0, Q\,\text{invertible s.t.}\,Q^\top M \succeq 0\right\}\\
\hspace{-.1cm}&=:&\hspace{-.2cm} \mathbb S_{DH}^{\succ 0}.
\end{eqnarray*}
This characterization changes the feasible set and also the objective function in the nearest stable matrix pair problem as
\begin{eqnarray}\label{eq:reformulation_1}
&\inf_{(M,X) \in \mathbb S} \{{\|E-M\|}_F^2+{\|A-X\|}_F^2\}\nonumber\\
&=\inf_{(M,(J-R)Q) \in \mathbb S_{DH}^{\succ 0}} \{{\|E-M\|}_F^2+{\|A-(J-R)Q\|}_F^2\} \nonumber\\
&=
\inf_{(M,(J-R)Q) \in \mathbb S_{DH}^{\succeq 0}} \{{\|E-M\|}_F^2+{\|A-(J-R)Q\|}_F^2\}, 
\end{eqnarray}
where  the set $\mathbb S_{DH}^{\succeq 0}$ containing all pairs of the form $(M,(J-R)Q)$ with
$J^\top = -J$, $R \succeq 0$ ($R$ can be singular), and $Q$ ($Q$ can be singular) such that $M^\top Q \succeq 0$, that is, $\mathbb S_{DH}^{\succeq 0}$ is the closure $\overline{\mathbb S_{DH}^{\succ 0}}$ of $\mathbb S_{DH}^{\succ 0}$.
Note that the set $\mathbb S_{DH}^{\succeq 0}$ is  not bounded, and hence the infimum in the right hand side of~\eqref{eq:reformulation_1} may not be attained.

\subsection{Optimization algorithms}

As mentioned in Section~\ref{sec:gen_algo}, the coupling constraint $Q^\top M \succeq 0$ in~\eqref{eq:reformulation_1} seems to prevent standard optimization schemes to converge to good solutions, as demonstrated in~\cite[Example 3]{gillis2018computing}. 
Similarly as in the previous section, let us introduce a new variable  $Z = M^\top Q \succeq 0$ in~\eqref{eq:reformulation_1} to obtain the following optimization problem with a modified  feasible set and objective function
\begin{equation} \label{reformoptprob}
\inf_{J = -J^\top, R \succeq 0, Q~{\rm invertibe}, Z \succeq 0} {\|A - (J-R) Q\|}_F^2 + {\| E^\top - ZQ^{-1} \|}_F^2 .
\end{equation}
Note that the values of the infimum in~\eqref{eq:reformulation_1} and~\eqref{reformoptprob} coincide. In fact,
$(M,J,R,Q)$, where $Q$ is invertible is a solution for~\eqref{eq:reformulation_1} with the optimal value $\mu$ if and only if
$(Z=M^\top Q,J,R,Q)$, where $Q$ is invertible is a solution for~\eqref{reformoptprob} with the optimal value $\mu$. This implies that the infimum in \eqref{reformoptprob} is given by
\begin{equation}\label{reformoptprob_temp}
 \inf_{J = -J^\top, R \succeq 0, Q~{\rm invertibe}, M,Q^\top M \succeq 0} {\|A - (J-R) Q\|}_F^2 + {\|E - M\|}_F^2.
\end{equation}
Furthermore, the closeness of the set $\mathbb S_{DH}^{\succeq 0}$ implies that~\eqref{reformoptprob_temp} coincides
with~\eqref{eq:reformulation_1}.

The feasible set~\eqref{reformoptprob} is rather simple, with no coupling of the variables, and it is relatively easy to project onto it. 
As it was done in Section~\ref{sec:proptim} for the nearest PR system problem, methods    like PGD (Algorithm~\ref{algo:gradient}) or FGM (Algorithm~\ref{algo:fgm}) can be used to solve the nearest stable matrix pair problem~\eqref{reformoptprob}~\cite{gillis2018computing}. In fact, the same algorithms and initializations as proposed in Section~\ref{sec:proptim} for the nearest PR system problem can be used on the system
$(E,A,[\,],[\,],[\,])$, where $[\,]$ is the empty matrix, to recover a stable approximation of $(E,A)$.

\subsection{Numerical Examples}

Again, let us consider the standard AC7 system. 
As done in Section~\ref{sec:numexpAC7stablematrix}, 
finding the nearest stable matrix to $A$ leads to a stable pair, $(I_n, (J-R)Q)$ of $(I_n,A)$. 
In this case, the matrix $E$ is untouched and remains the identity matrix. It turns out that this solution is also a stationary point of the descriptor system: relaxing the constraint that $E=I_n$ in the approximation by solving~\eqref{reformoptprob}, and starting the algorithm at the same solution does not modify the solution. 

However, for some other matrices, this is not the case; see some numerical examples in~\cite{gillis2018computing}.


\chapter{Nearest stable discrete-time systems} \label{chap:discrete}

The aim of this chapter is to derive a characterization for the discrete-time systems, with a similar spirit as the results in the previous chapters. 
We provide a factorization of $A$ whose factors belong to simple sets onto which is it easy to project, allowing us to design optimization algorithms, such as fast gradient methods, for computing the nearest stable matrix  and the nearest stable matrix pair in the discrete-time case. We note that the matrix case can be tackled using the $\Omega$-stability
results from Section~\ref{sec:omegastab} with $\Omega:=\Omega_D(0,1)$. However, here we provide a different, and numerically more efficient, parametrization for the set of stable matrices in the discrete-time case (Section~\ref{sec:nsm}). This idea is then generalized to compute a nearby descriptor system with a fixed rank (Section~\ref{sec:nsmp}).

\section{Nearest stable matrix} \label{sec:nsm}

Consider a discrete-time linear system described by the following difference equation
\begin{equation}\label{dessys}
x(t+1)=Ax(t),\quad t \in \mathbb N,
\end{equation}
where $A\in \R^{n,n}$ and $\mathbb N$ is the set of nonnegative integers,
$x(t)$ denotes the $n$-dimensional state vector. 
Recall from Theorem~\ref{th:stability} that, 
if $\lambda_1,\ldots,\lambda_n$ are the eigenvalues of $A$, then such a system
is called stable (resp.\ asymptotically stable) if $|\lambda_i|\leq 1$ (resp.\ $|\lambda_i|<1$) for all $i=1,\ldots,n$, and the eigenvalues
with unit modulus are semisimple; otherwise, it is called unstable.

Analogously to the continuous-time case~\eqref{eq:prob_def}, the \emph{nearest stable matrix problem} in the discrete-time case is the following optimization problem
\begin{equation}\label{eq:probdef}
\inf_{X\in \mathbb S_d^{n,n}}{\|A-X\|}_{F}^2,
\end{equation}
where $\mathbb S_d^{n,n}$ is the set of all stable matrices of size $n \times n$. This problem in discrete-time case has received much less attention, and to the best our knowledge, only~\cite{ONV13} considered this problem without any assumption on the entries of the matrix.
For the class of positive systems of the form~\eqref{dessys}, where the matrix $A$ is component-wise nonnegative, the problem of computing the nearest stable nonnegative matrix has been studied very recently in~\cite{GugP18, NesP20}.

The problem~\eqref{eq:probdef} is notoriously difficult, with the existence of many local minima, 
up to $2^n$ in dimension $n$,~\cite{GugP18}. The set $\mathbb S_d^{n,n}$ of stable matrices is highly nonconvex~\cite{ONV13}, and neither open nor closed.

\subsection{A new characterization for discrete-time stable matrices}

The principle strategy in~\cite{gillis2019approximating} for solving the problem~\eqref{eq:probdef} is to reformulate it into into an equivalent problem with a simpler feasible set onto which points can be projected relatively easily. This is achieved by deriving a factorization of stable matrices into symmetric and orthogonal matrices. To see this, let us define the SUN form of a matrix. 

\begin{definition}
A matrix $A \in \mathbb R^{n,n}$ is said to admit a \emph{SUN form} if there exist $S,U,N \in \mathbb R^{n,n}$
such that $A=S^{-1}UNS$ where $S\succ 0$, $U$ is orthogonal, $N\succeq 0$ and ${\|N\|}_2\leq 1$.
\end{definition}

\begin{theorem}\label{thm:mainresult}
A matrix is stable (resp.\  asymptotically stable) if and only if it admits a SUN form (resp.\ a SUN form with $\|N\| < 1$). 
\end{theorem}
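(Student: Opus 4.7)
The plan is to establish both implications via the discrete-time Lyapunov-type LMI characterization of stability recalled in Theorem~\ref{th:stability}, combined with the real polar decomposition of square matrices.

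For the sufficiency direction, I would assume that $A = S^{-1}UNS$ admits a SUN form and set $P := S^2 \succ 0$. A direct computation, using $U^\top U = I$ and $N^\top = N$, yields
\begin{equation*}
A^\top P A - P \;=\; SN^\top U^\top U N S - S^2 \;=\; S(N^2 - I)S.
\end{equation*}
Since $N \succeq 0$ and $\|N\|_2 \leq 1$ imply $N^2 \preceq \|N\|_2^2 I \preceq I$, the right-hand side is negative semidefinite by congruence with $S \succ 0$. Theorem~\ref{th:stability} then yields that $A$ is (discrete-time) stable. If in addition $\|N\|_2 < 1$, then $N^2 \prec I$, the LMI becomes strict, and $A$ is asymptotically stable.

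For the necessity direction, I would use Theorem~\ref{th:stability} to produce $P \succ 0$ with $A^\top P A - P \preceq 0$, then set $S := P^{1/2} \succ 0$ and $M := S A S^{-1}$. Conjugation gives
\begin{equation*}
M^\top M \;=\; S^{-1} A^\top P A \, S^{-1} \;\preceq\; S^{-1} P S^{-1} \;=\; I,
\end{equation*}
so $\|M\|_2 \leq 1$. Invoking the real polar decomposition, I would write $M = UN$ with $U \in \R^{n,n}$ orthogonal and $N = (M^\top M)^{1/2} \succeq 0$, and observe that $\|N\|_2 = \|M\|_2 \leq 1$. Then $A = S^{-1} M S = S^{-1} U N S$ is the desired SUN form. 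The asymptotically stable case follows identically, invoking the strict version of the LMI in Theorem~\ref{th:stability} to get $M^\top M \prec I$ and hence $\|N\|_2 < 1$.

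Neither direction presents a serious obstacle, since both the Lyapunov LMI characterization and the existence of a real polar decomposition are classical. The only minor subtlety is that when $A$ is singular, $M$ is singular as well and the orthogonal factor $U$ in the polar decomposition is not unique, but existence is unaffected and the argument goes through verbatim. Note also that if $S$ were only required to be invertible rather than positive definite, one would not be able to identify $P = S^2$ with the Lyapunov certificate; the assumption $S \succ 0$ is what makes the SUN parametrization match the LMI exactly.
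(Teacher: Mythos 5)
Your proof is correct and takes essentially the same route as the paper's: the discrete-time Lyapunov characterization of Schur stability combined with the polar decomposition. You streamline the argument slightly by working directly with the LMI $A^\top P A - P \preceq 0$, setting $S = P^{1/2}$, and using a single polar decomposition of $SAS^{-1}$ (the paper states the Lyapunov step via invariant ellipsoids and applies the polar decomposition twice), and you make the sufficiency direction fully explicit through the identity $A^\top S^2 A - S^2 = S(N^2 - I)S$, which the paper leaves implicit in its chain of equivalences.
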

\begin{proof}
The proof follows by the following two facts: 
\begin{enumerate} 
\item  The Lyapunov criterion of the Schur stability~\cite{Gan59a}. 

\item The polar decomposition~\cite{HorJ85}: given a square matrix $X$, a polar decomposition of $X$ is a factorization $X = WH$ where $W$ is a unitary matrix and $H$ is PSD, both square and of the same size. 
The polar decomposition of a square matrix $X$ always exists. If $X$ is invertible, the decomposition is unique, and the factor $H$ is positive definite. Given an SVD of $X = U\Sigma V$, a polar decomposition is given by $W = U V^\top$ and $H = V^\top \Sigma V$. 
\end{enumerate} 

By the Lyapunov theorem, $A$ is stable (asymptotically stable) if and only if there exists
an ellipsoid $E$ such that $AE \subseteq E$ (respectively, $AE \subseteq \text{int}E$). This is equivalent to say that there
exist matrices $C$ and $L$ such that ${\|L\|}_2 \leq 1$ (respectively, ${\|L\|}_2 < 1$) and $A = C^{-1}LC$. Now we write
the polar decomposition $C = V S$, where $V$ is orthogonal and $S \succ 0$. Thus, $A = S^{-1}V^{-1}LV S$.
Denote $V^{-1}LV = M$. Clearly, ${\|M\|}_2 = {\|L\|}_2$. Finally, write the polar decomposition: $M = UN$
with $U$ orthogonal, $N \succ 0$, and ${\|N\|}_2 = {\|M\|}_2$. We have $A = S^{-1}UNS$, which completes the proof.
\end{proof}


In view of Theorem~\ref{thm:mainresult}, the set $\mathbb S_d^{n,n}$ of stable matrices can be
characterized as the set of matrices that admit a SUN form, or equivalently, we can parametrize the set of stable matrices using a matrix triple $(S,U,N)$ as follows
\begin{eqnarray*}
\mathbb S_d^{n,n}=\Big\{S^{-1}UNS \in \R^{n,n}~\big|~S\succ 0,~U~\text{orthogonal},
~N\succeq 0~\text{with}~
{\|N\|}_2\leq 1
\Big\}.
\end{eqnarray*}
This characterization allows to reformulate the 
nearest stable matrix problem~\eqref{eq:probdef} as follows 
\begin{eqnarray}\label{eq:thm_reform}
\inf_{X\in \mathbb S_d^{n,n}}{\|A-X\|}_{F}^2 \; = \; 
\inf_{S\succ 0,~U\,\text{orthogonal},~N \succeq 0,~ {\|N\|}_2 \leq 1}{\|A-S^{-1}UNS\|}_{F}^2.
\end{eqnarray}

\subsection{Optimization algorithms}
An advantage of this reformulation is that the feasible set is rather simple and therefore it is relatively easy to project onto it. As a result, methods like BCD,  PGD (Algorithm~\ref{algo:gradient}) or projected FGM (Algorithm~\ref{algo:fgm}) can be used to tackle~\eqref{eq:thm_reform}. 

\subsubsection{Gradient}
The gradient of $f(S,U,N) = {\|A-S^{-1}UNS\|}_{F}^2$ with respect to $S$ is given by
\[
\nabla_S f(S,U,N) = 2 \, S^{-\top} [R^{\top} (R-A)  - (R-A)R^{\top}],
\]
where $R = S^{-1}UNS$.
For $U$ and $N$, we have
\[
\nabla_U f(S,U,N) = - 2 S^{-1} (A-R) S N^{\top} \quad \text{and}\quad
\nabla_N f(S,U,N) = - 2 U^{\top} S^{-1} (A-R) S ,
\]
see~\cite{gillis2019approximating} for more details. 

\subsubsection{Projection onto the feasible set} 
The projection of a solution $(S,U,N)$ onto the feasible set of~\eqref{eq:thm_reform} can be computed in closed form. 

\paragraph{Projections for $S$ and $N$}  
 In order to calculate the projection of a square matrix onto the set of positive semidefinite contractions, let us introduce some notation.
For a  symmetric matrix $H \in \R^{n,n}$ with eigenvalues $\lambda_k$ ($1 \leq k \leq n$) and eigenvalue decomposition $H=V{\rm diag}(\lambda_1,\ldots, \lambda_n)V^{\top}$, we set
$g(H)=V{\rm diag}(g(\lambda_1),\ldots, g(\lambda_n))V^{\top}$, where $g$ is any complex valued function defined on the spectrum of $H$. The matrix $g(H)$ does not depend on the particular orthogonal matrix $V$ since it is easily verified that $g(H)=q(H)$, where $q$ is any polynomial that maps each $\lambda_k$ to its value $g(\lambda_k)$.  For a general matrix $X \in \R^{n,n}$, we consider functions of its symmetric part, $g^s(X):=g((X+X^{\top})/2)$. For an interval $[a,b]\subset \R \cup \{\infty\}$ and $\lambda\in \R$ let
\[
p_{a,b}(\lambda):=\max\{a,\min\{b, \lambda\}\}=
\begin{cases}
a &\text{if }\lambda< a,\\
\lambda &\text{if }\lambda \in [a,b],\\
b&\text{if }b< \lambda.
\end{cases}
\]
Then $p_{a,b}(\lambda)$ is the nearest point projection of $\lambda$ onto $[a,b]$, that is,
$|\lambda-p_{a,b}(\lambda)|={\rm argmin}_{h \in [a,b]}|\lambda-h|$.
%
%
\begin{proposition}
The matrix $p_{a,b}^s(X)$ is the nearest point projection of $X \in \R^{n,n}$ with respect to the Frobenius norm onto the set
${\mathcal I}_{a,b}=\{\, H \in \R^{n,n}\, |\; H=H^{\top},\; a \, I\preceq H\preceq b\, I\}$, that is,
 \[
 p_{a,b}^s(X)={\rm argmin}_{H \in{\mathcal I}_{a,b}} {\|X-H\|_F}.
 \]
\end{proposition}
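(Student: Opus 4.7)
The plan is to reduce the projection problem to a sequence of three simpler problems: projection onto the symmetric matrices, reduction to a diagonal problem, and finally scalar projection onto the interval $[a,b]$.

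First I would split $X = X_s + X_a$ into its symmetric part $X_s = (X+X^{\top})/2$ and its antisymmetric part $X_a = (X-X^{\top})/2$. Since $\mathcal{I}_{a,b}$ consists of symmetric matrices and $\langle S,A\rangle_F = 0$ whenever $S$ is symmetric and $A$ is antisymmetric, for any $H \in \mathcal{I}_{a,b}$ the Pythagorean identity gives
\begin{equation*}
\|X - H\|_F^2 \;=\; \|X_s - H\|_F^2 + \|X_a\|_F^2.
\end{equation*}
Thus the antisymmetric part is an additive constant and the problem reduces to minimizing $\|X_s - H\|_F^2$ over $H \in \mathcal{I}_{a,b}$.

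Next, writing an eigendecomposition $X_s = V\Lambda V^{\top}$ with $\Lambda = \mathrm{diag}(\lambda_1,\ldots,\lambda_n)$ and setting $K := V^{\top} H V$, orthogonal invariance of the Frobenius norm yields $\|X_s - H\|_F^2 = \|\Lambda - K\|_F^2$. The constraint $aI \preceq H \preceq bI$ is equivalent to $aI \preceq K \preceq bI$ because conjugation by $V$ preserves the PSD order and fixes $I$. So the problem becomes: minimize $\|\Lambda - K\|_F^2$ over symmetric $K$ with $aI \preceq K \preceq bI$.

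The key step, which I view as the only mildly delicate point, is to argue that the optimal $K$ is diagonal. Expanding $\|\Lambda - K\|_F^2 = \sum_i (\lambda_i - K_{ii})^2 + \sum_{i\ne j} K_{ij}^2$ shows that the off-diagonal entries contribute only a nonnegative penalty, so if $K$ is feasible then the diagonal truncation $K' = \mathrm{diag}(K_{11},\ldots,K_{nn})$ cannot increase the objective. Feasibility of $K'$ follows from the fact that $aI \preceq K \preceq bI$ forces $a \le e_i^{\top} K e_i = K_{ii} \le b$, so $aI \preceq K' \preceq bI$. Hence the minimum is attained at a diagonal matrix, and the problem decouples into $n$ scalar problems $\min_{t \in [a,b]} (\lambda_i - t)^2$, whose solutions are $t = p_{a,b}(\lambda_i)$ by definition of the scalar projection. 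Assembling, the optimizer is $H^\star = V\, \mathrm{diag}(p_{a,b}(\lambda_1),\ldots,p_{a,b}(\lambda_n))\, V^{\top} = p_{a,b}^s(X)$, which is exactly the claim.
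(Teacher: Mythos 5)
Your proof is correct and follows essentially the same route as the paper's: split off the antisymmetric part via the Pythagorean identity, diagonalize the symmetric part, use orthogonal invariance, and observe that the off-diagonal entries of $V^{\top}HV$ only add a nonnegative penalty while the diagonal entries are confined to $[a,b]$, reducing everything to scalar projections. Your explicit verification that the diagonal truncation remains feasible is a slightly more careful rendering of a step the paper leaves implicit, but the argument is the same.
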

\begin{proof}
Let $(X+X^{\top})/2=V{\rm diag}(\lambda_1,\ldots, \lambda_n)V^{\top}$ with orthogonal $V$. Let $H\in {\mathcal I}_{a,b}$, and let
$\tilde H=V^\top  H V=[\tilde h_{ij}]$. Then $\tilde H\in {\mathcal I}_{a,b}$ and therefore $\tilde h_{ii}
\in[a,b]$ for all $i=1, \ldots , n$. By orthogonality between symmetric and skew symmetric matrices
and the orthogonal invariance of the Frobenius norm we have
\begin{eqnarray}\label{eqtemp1}
{\|X-H\|}_F^2
&=&{\left\|\frac{X-X^{\top}}{2}\right\|}_F^2+{\left\|\frac{X+X^{\top}}{2}-H\right\|}_F^2 \nonumber \\
&=& {\left\|\frac{X-X^{\top}}{2}\right\|}_F^2+{\|{\rm diag}(\lambda_1,\ldots, \lambda_n)-\tilde H\|}_F^2 \nonumber \\
  &= & {\left\|\frac{X-X^{\top}}{2}\right\|}_F^2
  + \sum_i(\lambda_i-\tilde h_{ii})^2+\sum_{i \not =j}\tilde h_{ij}^2.
\end{eqnarray}
The sum is minimized by
$\tilde H={\rm diag}( p_{a,b}(\lambda_1), \ldots , p_{a,b}(\lambda_n))$. Thus,
$H=p_{a,b}^s(X)$.
\end{proof}
Since for a positive semidefinite matrix the inequality ${\|N\|}_2 \leq \alpha$ is equivalent to $N\preceq \alpha I_n$
we have the corollaries below.
\begin{corollary}
 The nearest point projection of $X \in \R^{n,n}$ onto the set of positive semidefinite
contractions with respect to Frobenius norm  is $p_{0,1}^s(X)$, that is,
\[
p_{0,1}^s(X) \; = \; {\rm argmin}_{N \succeq 0, {\|N\|}_2 \leq 1} {{\|X-N\|}_F}.
\]
\end{corollary}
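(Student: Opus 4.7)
The plan is to reduce the corollary directly to the preceding proposition by identifying the feasible set of positive semidefinite contractions with the set $\mathcal{I}_{0,1}$, after which the claim follows by substituting $a=0$ and $b=1$ into the proposition. So there is essentially nothing new to prove beyond verifying one set equality.

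First I would show that
\[
\{N \in \R^{n,n} \mid N \succeq 0,\ \|N\|_2 \leq 1\} \; = \; \mathcal{I}_{0,1}.
\]
For the inclusion $(\subseteq)$, any $N$ with $N \succeq 0$ is automatically symmetric (by the paper's convention of $\succeq$), so $N = N^\top$ and $N \succeq 0 \cdot I_n$; moreover, since $N$ is symmetric PSD, its spectral norm equals its largest eigenvalue $\lambda_{\max}(N)$, and $\|N\|_2 \leq 1$ therefore gives $\lambda_{\max}(N) \leq 1$, which is equivalent to $N \preceq I_n$. For the inclusion $(\supseteq)$, if $H = H^\top$ satisfies $0 \cdot I_n \preceq H \preceq I_n$, then $H \succeq 0$ and all eigenvalues of $H$ lie in $[0,1]$, so $\|H\|_2 = \lambda_{\max}(H) \leq 1$.

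Once this set equality is established, the corollary is immediate: the previous proposition yields
\[
{\rm argmin}_{H \in \mathcal{I}_{0,1}} \|X - H\|_F \; = \; p_{0,1}^s(X),
\]
and substituting the set equality into the left-hand side gives exactly the statement of the corollary. The only subtlety to double-check is that the minimizer is unique (so that the $\argmin$ is indeed the single point $p_{0,1}^s(X)$); this follows from the strict convexity of the squared Frobenius distance restricted to the convex set $\mathcal{I}_{0,1}$, and is also visible from the explicit minimization in the proof of the proposition, where equation~\eqref{eqtemp1} forces $\tilde H$ to be diagonal with entries $p_{0,1}(\lambda_i)$. There is no real obstacle here; the only thing to be careful about is the notational point that $\|\cdot\|_2$ is the spectral norm and coincides with the largest eigenvalue on symmetric PSD matrices, which is what makes the contraction constraint convert cleanly into an LMI.
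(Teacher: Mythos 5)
Your argument is correct and is exactly the route the paper takes: the text immediately preceding the corollary notes that for a positive semidefinite matrix the condition $\|N\|_2\leq \alpha$ is equivalent to $N\preceq \alpha I_n$, so the set of PSD contractions is $\mathcal{I}_{0,1}$ and the corollary follows from the proposition with $a=0$, $b=1$. Your write-up just makes this set identification (and the uniqueness of the minimizer) more explicit than the paper does.
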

\begin{corollary}{\rm \cite{Hig88b}}
 The nearest point projection of $X \in \R^{n,n}$ onto the cone of $n\times n$ positive semidefinite
matrices with respect to Frobenius norm is $p_{0,\infty}^s(X)$, that is,
\[
p_{0,\infty}^s(X) \; = \; {\rm argmin}_{S \succeq 0} {{\|X-S\|}_F}.
\]
\end{corollary}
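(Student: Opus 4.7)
The plan is to deduce this corollary as an immediate specialization of the preceding proposition, by observing that the set $\mathcal{I}_{0,\infty} = \{H \in \R^{n,n} \mid H = H^\top,\ 0 \preceq H \preceq \infty \cdot I\}$ coincides, under the natural convention that the constraint $H \preceq \infty \cdot I$ is vacuous, with the cone of PSD matrices. Accordingly, the scalar projection $p_{0,\infty}(\lambda) = \max\{0, \min\{\infty, \lambda\}\} = \max\{0, \lambda\}$ is the nearest point map from $\R$ onto $[0, \infty)$.

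First, I would invoke the proposition with $a = 0$ and $b = \infty$. Its proof goes through verbatim: write $(X + X^\top)/2 = V \Diag(\lambda_1, \ldots, \lambda_n) V^\top$, let $\tilde H = V^\top H V$, and use the orthogonal decomposition of $\|X - H\|_F^2$ into its symmetric and skew-symmetric parts exactly as in equation analogous to the displayed identity, to obtain
\[
\|X - H\|_F^2 = \left\| \frac{X - X^\top}{2}\right\|_F^2 + \sum_i (\lambda_i - \tilde h_{ii})^2 + \sum_{i \neq j} \tilde h_{ij}^2.
\]
Minimizing over $H \succeq 0$ is equivalent to minimizing over $\tilde H \succeq 0$ (since orthogonal conjugation preserves positive semidefiniteness), which forces the off-diagonal entries to vanish and, in the diagonal case, reduces to minimizing $\sum_i (\lambda_i - \tilde h_{ii})^2$ subject to $\tilde h_{ii} \geq 0$. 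The componentwise optimum is $\tilde h_{ii} = \max\{0, \lambda_i\} = p_{0,\infty}(\lambda_i)$, yielding $H = p_{0,\infty}^s(X)$.

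The only step requiring a tiny bit of care is verifying that restricting $\tilde H$ to be diagonal incurs no loss: a symmetric PSD matrix with prescribed diagonal entries minimizes the sum-of-squares of off-diagonal entries by taking them to be zero, and the resulting diagonal matrix is automatically PSD whenever its diagonal entries are nonnegative. There is no real obstacle here; the main conceptual point is simply ensuring that the proof of the proposition does not use the finiteness of $b$ in an essential way, which it does not (finiteness of $b$ was only used, implicitly, to project diagonal entries onto a bounded interval, but the argument is identical for the half-line $[0, \infty)$).
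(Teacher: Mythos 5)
Your proposal is correct and matches the paper's route exactly: the paper presents this corollary as an immediate specialization of the preceding proposition to $a=0$, $b=\infty$ (noting only that $\|N\|_2\le\alpha$ for PSD $N$ is equivalent to $N\preceq\alpha I_n$), which is precisely what you do. Your additional check that the finiteness of $b$ plays no essential role in the proposition's proof is a sensible, if minor, point of care.
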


\paragraph{Projections for $U$} 

Before we give the projection onto the set of orthogonal matrices, we provide another closely related projection that will be useful to obtain initializations in Section~\ref{subsec:int}.
These results require the polar decomposition.

\begin{proposition}\label{thm:optimUB}
Let $X\in \R^{n,n}$ and let $X=VH$ be the polar decomposition of $X$, where $V\in \R^{n,n}$ is orthogonal and $H \in \R^{n,n}$ satisfies $ H\succeq 0$. Then
$$
{\argmin}_{(U,N), U^{\top}U=I_n, N \succeq 0, {\|N\|}_2 \leq 1}
{\| X - UN\|}_F^2 =
\left( V, p_{0,1}(H)\right),
$$
\end{proposition}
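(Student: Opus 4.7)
The plan is to reduce the minimization to a scalar optimization over the eigenvalues of $H$ via orthogonal invariance of the Frobenius norm and von Neumann's trace inequality, and then verify that the proposed candidate achieves the resulting lower bound.

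First, I would exploit $X = VH$ together with orthogonal invariance. Writing $W := V^{\top}U$, which ranges over all orthogonal matrices as $U$ does, I have
\[
{\|X - UN\|}_F^2
= {\|VH - VWN\|}_F^2
= {\|H - WN\|}_F^2
= {\|H\|}_F^2 - 2\,\tr(HWN) + {\|N\|}_F^2,
\]
using $(WN)^{\top}(WN) = N^2$ (since $W^{\top}W = I_n$ and $N = N^{\top}$). Thus the problem decouples neatly and what remains is to upper-bound the cross term $\tr(HWN)$ over admissible $(W,N)$.

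Next, I would apply the von Neumann trace inequality to the cross term. Since $N \succeq 0$ has singular values equal to its eigenvalues $\nu_1 \geq \cdots \geq \nu_n \geq 0$, and $HW$ has the same singular values as $H$, namely $\lambda_1 \geq \cdots \geq \lambda_n \geq 0$ (the eigenvalues of $H$), the von Neumann inequality gives
\[
\tr(HWN) \;\leq\; \sum_{i=1}^n \lambda_i \nu_i.
\]
Combining with the expansion above yields
\[
{\|X - UN\|}_F^2 \;\geq\; \sum_{i=1}^n \lambda_i^2 - 2\sum_{i=1}^n \lambda_i \nu_i + \sum_{i=1}^n \nu_i^2 = \sum_{i=1}^n (\lambda_i - \nu_i)^2,
\]
valid for every admissible $(W,N)$. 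The constraint $0 \leq \nu_i \leq 1$ then makes this a scalar problem whose minimum in each coordinate is attained at $\nu_i^\star = p_{0,1}(\lambda_i)$, giving the lower bound $\sum_i (\lambda_i - p_{0,1}(\lambda_i))^2 = \sum_i \max(0, \lambda_i - 1)^2$.

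Finally, I would verify that the candidate $(U,N) = (V, p_{0,1}(H))$ attains this bound. Indeed, with this choice,
\[
X - UN = VH - V\,p_{0,1}(H) = V\bigl(H - p_{0,1}(H)\bigr),
\]
so ${\|X - UN\|}_F^2 = {\|H - p_{0,1}(H)\|}_F^2 = \sum_i (\lambda_i - p_{0,1}(\lambda_i))^2$, matching the lower bound exactly. The main (mild) obstacle is the correct invocation of von Neumann's inequality: one must verify that the sorted singular values of $HW$ coincide with those of $H$ regardless of the orthogonal $W$, and that equality in the trace inequality is consistent with the simultaneous choice $W = I$ (i.e., $U = V$) and $N = p_{0,1}(H)$, which holds because $H$ and $p_{0,1}(H)$ share the same eigenbasis.
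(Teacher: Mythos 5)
Your proof is correct, but it reaches the key lower bound by a different route than the paper. Both arguments start identically, using $X=VH$ and orthogonal invariance to reduce to minimizing ${\|H-WN\|}_F^2$ over orthogonal $W$ and admissible $N$. From there the paper conjugates by the eigenbasis $Q$ of $H$, writes the objective as ${\|\operatorname{diag}(\lambda_1,\dots,\lambda_n)-M\|}_F^2$ with $M=Q^{\top}V^{\top}UNQ$, discards the off-diagonal contributions, and uses only the elementary fact that every diagonal entry of a matrix with ${\|M\|}_2={\|N\|}_2\le 1$ lies in $[-1,1]$; this yields $\sum_i(\lambda_i-p_{0,1}(\lambda_i))^2$ directly, together with an explicit equality condition $UN=Vp_{0,1}(H)$. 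You instead expand ${\|H-WN\|}_F^2={\|H\|}_F^2-2\tr(HWN)+{\|N\|}_F^2$ and invoke von Neumann's trace inequality to bound the cross term by $\sum_i\lambda_i\nu_i$ with sorted eigenvalues, obtaining the lower bound $\sum_i(\lambda_i-\nu_i)^2$ and then optimizing each coordinate over $[0,1]$. Your approach is slicker and cleanly separates the roles of $W$ and of the spectrum of $N$, at the cost of importing a heavier classical inequality; the paper's argument is more elementary and self-contained, and has the small additional benefit of characterizing exactly when equality holds (namely $UN=Vp_{0,1}(H)$), whereas you establish attainment only by direct verification of the candidate. One point worth making explicit in your write-up: after von Neumann the $\nu_i$ must be taken in decreasing order to pair with the sorted $\lambda_i$, but since $p_{0,1}$ is monotone the coordinatewise minimizers $\nu_i^{\star}=p_{0,1}(\lambda_i)$ automatically respect that ordering, so the bound is attainable; in any case the per-coordinate inequality $(\lambda_i-\nu_i)^2\ge(\lambda_i-p_{0,1}(\lambda_i))^2$ holds for every $\nu_i\in[0,1]$, so the conclusion stands.
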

%
%
\begin{proof}
Let $H=Q\,{\rm diag}(\lambda_1, \ldots, \lambda_n)Q^{\top}$ be a diagonalization of $H$ with orthogonal $Q$.
Let $U,N\in \R^{n,n}$ be such that $U^{\top}U=I_n$ and $N\succeq 0$ with ${\|N\|}_2\leq 1$.
Then
\begin{eqnarray}\label{eq:optUB1}
{\|X-UN\|}_F^2&=&{\|VH-UN\|}_F^2={\|H-V^{\top}UN\|}_F^2 \nonumber\\
&=& {\|Q\, {\rm diag}(\lambda_1, \ldots, \lambda_n)Q^{\top} -V^{\top} UN\|}_F^2 \nonumber\\
&=&
{\|{\rm diag}(\lambda_1, \ldots, \lambda_n)-Q^{\top}V^{\top}UNQ\|}_F^2 \nonumber\\
&\geq & \sum_{i}(\lambda_i-p_{-1,1}(\lambda_i))^2\label{eq:esti5}
\\
&= & \sum_{i}(\lambda_i-p_{0,1}(\lambda_i))^2.\nonumber
\end{eqnarray}
The last equation holds since all $\lambda_i$'s are nonnegative.
The inequality (\ref{eq:esti5}) follows from the fact that all diagonal
entries of $Q^{\top}V^{\top}UNQ$ are contained in $[-1,1]$
since ${\|Q^{\top}V^{\top}UNQ\|}_2={\|N\|}_2\leq 1$.
Equality holds in (\ref{eq:esti5}) if and only if
$Q^{\top} V^{\top} UNQ={\rm diag}( p_{0,1}(\lambda_1), \ldots , p_{0,1}(\lambda_n))$.
The latter is equivalent to $UN = Vp_{0,1}(H)$.
\end{proof}
%
%
\begin{proposition}
 Denoting $\mathcal{P}_{\bot}(X)$ the projection of $X$ onto the set of $n\times n$ orthogonal matrices, we have
$
\mathcal{P}_{\bot}(X)
= \argmin_{U^{\top}U = I_n} {\|X-U\|}_F
= V$, where $X = V H$ is the polar decomposition of $X$.
\end{proposition}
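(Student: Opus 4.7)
The plan is to reduce the problem to a trace maximization and then identify the optimum via the polar decomposition, in the same spirit as the proof of Proposition~\ref{thm:optimUB}. First I would expand the squared Frobenius norm as $\|X-U\|_F^2 = \|X\|_F^2 - 2\langle X, U\rangle + \|U\|_F^2$. Since $\|U\|_F^2 = \tr(U^{\top}U) = \tr(I_n) = n$ is constant across the feasible set, minimizing the objective over orthogonal matrices is equivalent to maximizing $\langle X,U\rangle = \tr(X^{\top}U)$ subject to $U^{\top}U = I_n$.

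Next I would substitute the polar decomposition $X = VH$ with $V^{\top}V = I_n$ and $H \succeq 0$. Then $\tr(X^{\top}U) = \tr(HV^{\top}U) = \tr(HW)$ where $W := V^{\top}U$ is orthogonal. Using the spectral decomposition $H = Q\,\mathrm{diag}(\lambda_1,\ldots,\lambda_n)\,Q^{\top}$ with $Q$ orthogonal and $\lambda_i \geq 0$, and setting $\tilde W := Q^{\top}W Q$ (which is again orthogonal), one obtains $\tr(HW) = \sum_{i=1}^n \lambda_i \tilde W_{ii}$. Since each column of $\tilde W$ has unit Euclidean norm, $|\tilde W_{ii}| \leq 1$ for every $i$, so $\tr(HW) \leq \sum_i \lambda_i = \tr(H)$, with equality attained when $\tilde W_{ii} = 1$ for all $i$ with $\lambda_i > 0$, which happens for example at $\tilde W = I_n$, i.e.\ $W = I_n$, i.e.\ $U = V$.

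Finally I would conclude that $V$ achieves the upper bound, hence $V \in \argmin_{U^{\top}U = I_n} \|X - U\|_F$. When $H \succ 0$ (equivalently, when $X$ is invertible) all $\lambda_i > 0$, so the condition $\tilde W_{ii} = 1$ for every $i$ forces $\tilde W = I_n$ (the only orthogonal matrix with all diagonal entries equal to $1$), making $V$ the unique minimizer; otherwise the set of minimizers is larger but still contains $V$. The only step requiring care is the bound $|\tilde W_{ii}|\leq 1$, which is immediate from orthogonality but is the essential ingredient; this is the same mechanism (restricted to $N = I_n$) that drives the estimate $(\ref{eq:esti5})$ in the proof of Proposition~\ref{thm:optimUB}, so the argument is a direct specialization and presents no real obstacle.
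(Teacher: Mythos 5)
Your proof is correct, and the paper in fact states this proposition without any proof, leaving it as an immediate specialization of Proposition~\ref{thm:optimUB}. Your trace-maximization argument — reducing to maximizing $\tr(HW)$ over orthogonal $W$ and bounding the diagonal entries of $\tilde W = Q^{\top}WQ$ by $1$ — is exactly the mechanism behind the inequality \eqref{eq:esti5} in the paper's proof of that proposition (with $N=I_n$), so you have supplied the intended argument, including the correct uniqueness discussion for invertible $X$.
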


\subsubsection{Initialization}~\label{subsec:int}
The algorithms BCD,  PGD or FGM used to solve the reformulation~\eqref{eq:thm_reform} are highly sensitive to  the starting points. Three initializations are proposed in~\cite{gillis2019approximating}. 
\paragraph{Standard initialization} Set $S = I_n$, for which the optimal values of $U$ and $N$ can be computed using the polar decomposition of $A$, see Proposition~\ref{thm:optimUB}:
\[
{\rm argmin}_{(U,N), U^{\top}U=I_n, N \succeq 0, {\|N\|}_2 \leq 1}
{\| A - UN\|}_F =
\left( V, p_{0,1}(H)\right),
\]
where $A = VH$ is the polar decomposition of $A$.

\paragraph{LMI-based initialization} Let
$\mu = \max(1,\rho(A))$ so that $A' = \frac{A}{\mu}$ is stable.
Then there exists a Lyapunov solution $P \succ 0$ to the system ${A'}^{\top}PA'-P \preceq 0$ (one can use the Matlab function \texttt{dlyap(A,eye(n)}). 
By setting $A'^{\top}PA'-P=-Q$ for some $Q \succeq 0$, and $A'=P^{-1/2}RP^{1/2}$ where $R=P^{1/2}A'P^{-1/2}$, we get
\begin{align*}
A'^{\top}PA'-P = -Q  
& \iff P^{1/2}R^{\top}RP^{1/2}-P  = -Q  \\
& \iff P^{1/2}\left(R^{\top}R-I_n\right)P^{1/2} = -Q \\
& \iff I_n-R^{\top}R = P^{-1/2}QP^{-1/2}  \\
& \iff \underbrace{I_n-P^{-1/2}QP^{-1/2}}_{=:H} = R^{\top}R.
\end{align*}
This implies that $H\succeq 0$ and we can write $R=UN$, where $N=H^{1/2}$ and $U$ is an orthogonal matrix.
Setting $S=P^{1/2}$ we have $A=P^{-1/2}RP^{1/2}=S^{-1}U N S$. 
Since $Q \succeq 0$ and $R^\top R \succeq 0$, we have ${\|H\|}_2 \leq 1$ which implies that ${\|B\|}_2 \leq 1$.

\paragraph{Random initialization} Generate each entry of $S$ using the normal distribution (in Matlab, \texttt{randn(n)}).
Then, replace $S$ with $SS^{\top}+I_n$ which is positive definite.
Ideally, one would like to compute the corresponding optimal $(U,N)$, that is, minimize ${\|A - S^{-1} UN S\|}_F$. However, it is not clear how to do this efficiently, and instead one can take $U$ and $N$ as the optimal solution of
\[
\min_{ U \text{ orthogonal}, N \succeq 0, {\|N\|}_2 \leq 1} \; {\|S A S^{-1} - UN\|}_F,
\]
that is, $(U,N)$ is the polar decomposition of $S A S^{-1}$ and $N$ is replaced with $p_{0,1}(H)$; see Proposition~\ref{thm:optimUB}.
The motivation is that if $S A S^{-1} \approx  UN$ then $A \approx  S^{-1} UN S$. 

A good strategy is to generate many initial random points, perform a few iterations of FGM, and keep the best solution to be refined with more FGM iterations. We refer to this approach as mRand-FGM.

\subsection{Numerical examples} \label{sec:numexpdstable} 

Let us illustrate the use of our proposed algorithm with some examples from the paper~\cite{GugP18}.

\paragraph{Example 2: 3-by-3 matrix} 

We consider
\[
A =\left( \begin{array}{ccc}
0.6 & 0.4&  0.1 \\
 0.5 & 0.5 & 0.3\\
 0.1 & 0.1 & 0.7
 \end{array} \right)
 \]
\normalsize  with $\rho(A) = 1.096$,  for which \cite{GugP18} shows that the nearest stable nonnegative matrix is
 \[
X =\left( \begin{array}{ccc}
0.5640 & 0.3599 & 0.0850 \\
 0.4716 & 0.4684 & 0.2881 \\
 0.0643 & 0.0602 & 0.6851
 \end{array} \right).
 \]
 \normalsize FGM for any initialization strategy converge to the same solution.
This is because, as shown in~\cite{GugP18} for nonnegative matrices, if a local minimum to problem~\eqref{eq:probdef}
is component-wise positive, then it is a global minimizer.

\paragraph{Example from \cite[Section 4.4]{GugP18}} 
We consider
\begin{equation} \label{matrixAGP} 
A =
\left( \begin{array}{ccccc}
 0.7 &  0.2 &  0.1 &  0.5 &  1 \\
 0.3 &  0.6 &  0.2 &  0.8 &  0.3 \\
 0.5 &  0.7 &  0.9 &  1 &  0.5 \\
 0.1 &  0.1 &  0.3 &  0.8 &  0.3\\
 0.8 &  0.2 &  0.9 &  0.3 &  0.2 \\
\end{array} \right)
\end{equation} 
\normalsize
with $\rho(A) = 2.4$. The nonnegative solution provided by Guglielmi and Protasov~\cite{GugP18} with their algorithm is
\[
X_+ =
\left( \begin{array}{ccccc}
 0.3796 &  0.1797 &  0 &  0.5 &  0.7343 \\
 0 &  0.5791 &  0.0069 &  0.8 &  0.0274 \\
 0.0580 &  0.6719 &  0.6403 &  1 &  0.1334 \\
 0 &  0 &  0 &  0.8 &  0 \\
 0.4204 &  0.1759 &  0.6770 &  0.3 &  0 \\
\end{array} \right)
\]
\normalsize with relative error 
${\|A-X_+\|}_F/\|A\|_F = 38.38\%$ (which is not necessarily optimal). (Recall that here nonnegativity is enforced which is not the case in our approach.)  

Depending on the initialization, FGM converges to different solutions: 
Stand-FGM, LMI-FGM and mRand-FGM converge to
three different solutions with relative errors 
26.31\%,  26.88\%, and 26.19\%, respectively. 
Interestingly, mRand-FGM provides the best solution, given by 
\[
S^{-1}UNS = 
\left( \begin{array}{ccccc} 
 0.5823 &  0.1491 &  -0.0759 &  0.5356 &  0.8519 \\ 
 0.2192 &  0.5868 &  0.0951 &  0.8549 &  0.1967 \\ 
 0.4583 &  0.6789 &  0.7997 &  1.0395 &  0.4186 \\ 
 -0.0493 &  -0.1106 &  -0.1899 &  0.7968 &  0.0094 \\ 
 0.7087 &  0.1482 &  0.7384 &  0.3266 &  0.0741 \\ 
\end{array} \right),  
\] 
whose eigenvalues are 
\[
\{ 
-0.5003,  
   1,  
   0.9920 \pm 0.0399i, 
   0.3561
\}. 
\] 
mRand-FGM performs better (but required additional costs) as it relies on generating several randomly generated starting point (namely, 100 in this experiment).

Figure~\ref{GP2018_dstable} shows the position of the eigenvalues of the different solutions. 
\begin{figure}[ht!]
\begin{center}
\includegraphics[width=0.9\textwidth]{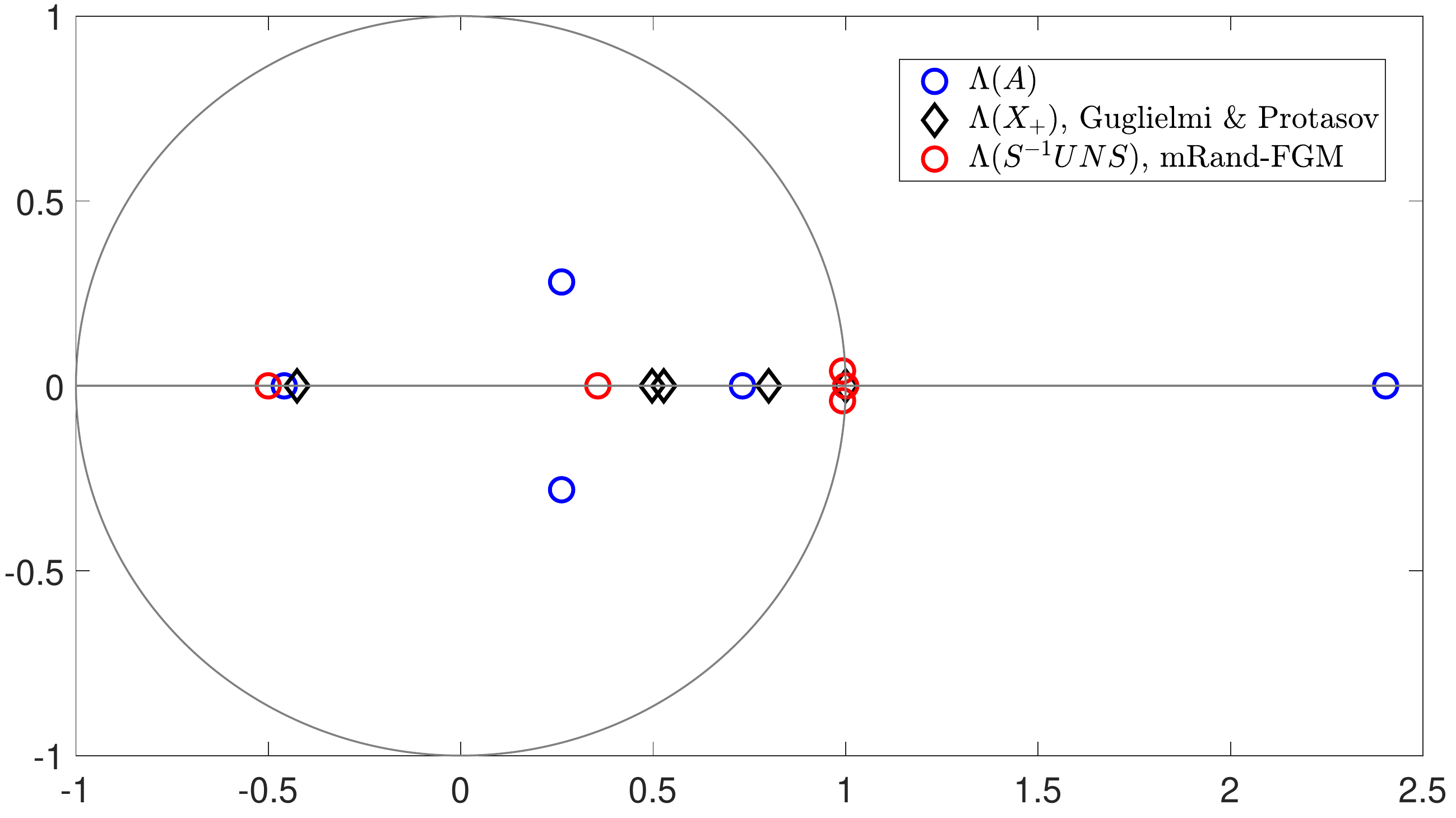}
\caption{Eigenvalues of $A$, its stable approximation $X_+$ computed by the algorithm from~\cite{GugP18}, 
and $S^{-1}UNS$ computed by  mRand-FGM.   \label{GP2018_dstable}}
\end{center}
\end{figure}

\section{Nearest stable matrix pair}  \label{sec:nsmp} 
The  matrix pair $(E,A) \in (\R^{n,n})^2$ is said to be \emph{discrete-time} \emph{stable} (resp.\ \emph{asymptotically stable})
if all the finite eigenvalues of $zE-A$ are
in the closed (resp.\ open) unit ball and those on the
unit circle are semisimple.
The  matrix pair $(E,A)$ is said to be discrete-time \emph{admissible} if it is regular, of index at most one, and discrete-time stable.

In this section, we discuss the discrete-time counter part of the continuous-time nearest stable matrix pair problem; see Problem~\ref{prob:contstabpair}. 

\begin{problem}\label{prob:discrstabpair}{\rm
For a given  pair $(E,A) \in (\R^{n,n})^2$
find the nearest discrete-time admissible matrix pair $(M,X)$. In other words,  if $\mathbb S_d$ is the set of
matrix pairs $(M,X)\in (\R^{n,n})^2$ that are regular, of index at most one, and have all finite eigenvalues inside closed unit ball,
then compute
\begin{equation}\label{mainprobdiscpair}
\inf_{(M,X) \in \mathbb S_d} \{{\|E-M\|}_F^2+{\|A-X\|}_F^2\}.\tag{$\mathcal{P}$}
\end{equation}
}
\end{problem}

To the best of our knowledge, this problem was discussed for the first time in~\cite{gillis2020note}, where the problem is found to be very difficult due to 
\begin{itemize}
\item the properties of the spectral radius as a function of the input matrix;
\item the set $\mathbb S_d$ is nonconvex, and is neither open nor closed;
\item not being able to reformulate the set $\mathbb S_d$, unlike the continuous-time case, Problem~\ref{prob:contstabpair}. 
\end{itemize}

In~\cite{gillis2020note}, authors considered instead a \emph{rank-constrained nearest stable matrix pair problem}. For this, let $r (\leq n) \in \mathbb Z_{+}$ and define a subset ${\mathbb S}^r_d$ of ${\mathbb S}_d$ by
\[
{\mathbb S}^r_d :=\left\{(M,X)\in {\mathbb S}_d:~\text{rank}(M)=r\right\}.
\]
For a given unstable matrix pair $(E,A)$, the \emph{rank-constrained nearest stable matrix pair problem} requires to compute the smallest perturbation $(\Delta_E,\Delta_A)$
with respect to Frobenius norm such that $(E+\Delta_E,A+\Delta_A)$ is admissible with $\text{rank}(E+\Delta_E)=r$, or equivalently, solve the following
optimization problem
\begin{equation}\label{restprob}
\inf_{(M, X)\in{\mathbb S}^r_d} {\|E-M\|}_F^2+{\|A-X\|}_F^2 \tag{$\mathcal{P}_r$}.
\end{equation}

The advantages of the rank-constraint admissible pair $(E,A)$ are the following: 
\begin{itemize}
\item It allows to parameterize the set ${\mathbb S}^r_d$ in terms of the matrix quadruple $(T,W,U,N)$, where $T,W\in \R^{n,n}$ are invertible, $U\in \R^{r,r}$ is orthogonal, and $N\in \R^{r,r}$ is a positive semidefinite contraction, see Section~\ref{discpair_reform}.
\item  The set ${\mathbb S}_d$ of admissible pairs can be written as
\begin{eqnarray*}
{\mathbb S}_d
&=&\bigcup_{r=1}^n {\mathbb S}_d^r.
\end{eqnarray*}
This implies that
\[
\eqref{mainprobdiscpair} = \min_{r=1,2,\ldots,n}\eqref{restprob}.
\]
Therefore, to compute a solution of~\eqref{mainprobdiscpair},
a possible way is therefore to solve $n$ rank-constrained problems~\eqref{restprob}.
\item Such situations may be useful when  descriptor systems are directly generated from data
where the constraints are added as a second step.  One practical example is of a circuit or power net, where one discretizes the flow and adds the Kirchhoff laws afterward.
\end{itemize}

\subsection{Reformulation of Problem~\eqref{restprob}}\label{discpair_reform}
The idea of parametrizing the set of discrete-time stable matrices (Theorem~\ref{thm:mainresult}) can be generalized to the set of rank- constrained admissible pairs, ${\mathbb S}_d^r$. It was noted in~\cite{gillis2020note} that in the proof of Theorem~\ref{thm:mainresult}, only the invertibility of matrix
$S$ is needed and the condition of symmetry on $S$ can be relaxed. 
The corresponding characterization of stable matrices is as follows. 
\begin{theorem}\label{thm:newstabmatchar}
Let $A\in \R^{n,n}$. Then $A$ is stable if and only if 
 $A=S^{-1}UNS$ for some $S,U,N \in \R^{n,n}$ such that $S$ is invertible, $U^{\top}U=I_n$, $N\succeq 0$, and ${\|N\|}_2\leq 1$.
\end{theorem}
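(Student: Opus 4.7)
The plan is to show each direction separately, leveraging Theorem~\ref{thm:mainresult} for one direction and a direct similarity argument for the other. The forward direction, that stability implies the factorization $A = S^{-1}UNS$ with $S$ merely invertible, is immediate: Theorem~\ref{thm:mainresult} already produces such a factorization with the stronger property $S \succ 0$, which in particular gives $S$ invertible. So no additional work is needed in this direction.

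For the backward direction, I would reduce the problem to analyzing $UN$ itself. Since $S$ is invertible, $A = S^{-1}(UN)S$ is a similarity transform of $UN$, so $A$ and $UN$ share the same eigenvalues \emph{and} the same Jordan structure. It therefore suffices to prove that the matrix $UN$, with $U$ orthogonal, $N \succeq 0$ and $\|N\|_2 \leq 1$, is stable in the discrete-time sense. Note that dropping the symmetry of $S$ is exactly what necessitates this similarity step—in the setting of Theorem~\ref{thm:mainresult}, the $S \succ 0$ assumption is never actually used for the stability conclusion, only for the polar-decomposition construction in the forward direction.

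To show $UN$ is discrete-time stable, I would verify the two required conditions in turn. First, since $U$ is orthogonal, $\|UN\|_2 = \|N\|_2 \leq 1$, so $\rho(UN) \leq \|UN\|_2 \leq 1$, placing every eigenvalue in the closed unit disk. Second, for semisimplicity of unit-modulus eigenvalues, the cleanest route is to observe that $\|(UN)^k\|_2 \leq \|UN\|_2^k \leq 1$ for every $k \geq 0$, so $UN$ is power-bounded. A standard Jordan-form argument then forces every eigenvalue $\lambda$ with $|\lambda|=1$ to be semisimple (otherwise a Jordan block of size $\geq 2$ would produce iterates of norm growing like $k |\lambda|^{k-1} = k$, contradicting boundedness). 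Combining these two facts yields stability of $UN$, and hence of $A$ by the similarity observation.

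The main obstacle is really just conceptual rather than technical: one must be careful to notice that the analogue of the argument in Theorem~\ref{thm:mainresult} actually goes through without using any symmetry of $S$, so the weakening $S \succ 0 \leadsto S$ invertible costs nothing. No new eigenvalue-localization lemma or perturbation estimate is required; the proof is short once the similarity $A \sim UN$ is isolated as the pivotal step, together with the standard equivalence between power-boundedness and the combination (spectral radius $\leq 1$) + (unit-modulus eigenvalues semisimple).
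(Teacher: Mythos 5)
Your proposal is correct, and the forward direction is handled exactly as the paper intends: Theorem~\ref{thm:mainresult} already yields the factorization with the stronger condition $S\succ 0$, so nothing more is needed. For the backward direction the paper does not write out an argument at all — it merely remarks that the proof of Theorem~\ref{thm:mainresult} uses only the invertibility of $S$, that proof in turn resting on the quoted Lyapunov/ellipsoid criterion for Schur stability (``$A$ is stable iff $A=C^{-1}LC$ for some invertible $C$ and some $L$ with $\|L\|_2\le 1$''), which is invoked as a black box. You instead give a self-contained proof of the relevant implication: $A$ is similar to $UN$, hence shares its Jordan structure; $\|UN\|_2=\|N\|_2\le 1$ gives $\rho(UN)\le 1$; and $\|(UN)^k\|_2\le 1$ gives power-boundedness, which rules out nontrivial Jordan blocks at unit-modulus eigenvalues by the standard $k|\lambda|^{k-1}$ growth argument. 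The two routes are equivalent in substance — the ellipsoid criterion is precisely the statement that $A$ is similar to a $2$-norm contraction — but your version makes explicit the semisimplicity step that the paper leaves buried inside the cited criterion, and your observation that the symmetry of $S$ plays no role in the stability conclusion is exactly the point the paper credits to the original reference. All steps check out.
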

\begin{theorem}\label{thm:reform1}
Let $E,A\in \R^{n,n}$ be such that $\text{rank}(E)=r$. Then
$(E,A)$ is admissible if and only if
there exist matrices $T,W \in \R^{n,n}$, $S,U,N\in \R^{r,r}$ such that the matrices $T,W,S$ are invertible,
$U^{\top}U=I_r$, $N\succeq 0$, ${\|N\|}_2\leq 1$ such that
\begin{equation}\label{eq:firstreform}
E=W \begin{bmatrix}   I_r  & 0 \\   0 & 0 \end{bmatrix}T, \quad
\text{and}
\quad
A=W\begin{bmatrix}  S^{-1}UNS  & 0 \\  0 & I_{n-r}\end{bmatrix}T.
\end{equation}
\end{theorem}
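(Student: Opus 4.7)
The plan is to prove the two directions separately, reducing the matrix-pair statement to the matrix statement of Theorem~\ref{thm:newstabmatchar} via a block-diagonal (Weierstrass-type) decomposition that separates the finite and infinite spectra.

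For the ``if'' direction, suppose $E$ and $A$ have the form~\eqref{eq:firstreform}. Because $W$ and $T$ are invertible, $\operatorname{rank}(E)=r$ is immediate, and a direct block computation gives
\[
zE-A \;=\; W\begin{bmatrix} zI_r - S^{-1}UNS & 0 \\ 0 & -I_{n-r} \end{bmatrix} T,
\]
so $\det(zE-A)=(-1)^{n-r}\det(W)\det(T)\det(zI_r-S^{-1}UNS)$ is a polynomial of degree exactly $r$, not identically zero. Hence the pair is regular with exactly $r=\operatorname{rank}(E)$ finite eigenvalues, which means it is of index at most one (see, e.g., \cite{Meh91,Var95}). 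The finite spectrum coincides with $\Lambda(S^{-1}UNS)$, which by Theorem~\ref{thm:newstabmatchar} lies in the closed unit disk with unit-modulus eigenvalues semisimple. Therefore $(E,A)$ is admissible.

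For the ``only if'' direction, I would invoke the real Weierstrass canonical form for regular index-one pairs (see, e.g., \cite{KunM06,Dua10}): since $(E,A)$ is regular, of index at most one, and $\operatorname{rank}(E)=r$, there exist real invertible matrices $P,Q\in\R^{n,n}$ such that
\[
PEQ=\begin{bmatrix} I_r & 0 \\ 0 & 0 \end{bmatrix}, \qquad PAQ=\begin{bmatrix} A_1 & 0 \\ 0 & I_{n-r}\end{bmatrix},
\]
where $A_1\in\R^{r,r}$ has as its eigenvalues the $r$ finite eigenvalues of the pencil $zE-A$, and their Jordan structure coincides with that of the pencil. By admissibility those eigenvalues lie in the closed unit disk with unit-modulus ones semisimple, so $A_1$ is a stable matrix. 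Applying Theorem~\ref{thm:newstabmatchar} to $A_1$ yields $S,U,N\in\R^{r,r}$ with $S$ invertible, $U^\top U=I_r$, $N\succeq 0$ and $\|N\|_2\le 1$, such that $A_1=S^{-1}UNS$. Setting $W:=P^{-1}$ and $T:=Q^{-1}$ produces the factorization~\eqref{eq:firstreform}.

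The main obstacle is the reverse direction: one needs a \emph{real} block decomposition that simultaneously puts $E$ into $\operatorname{diag}(I_r,0)$ and $A$ into $\operatorname{diag}(A_1,I_{n-r})$ while preserving the finite Jordan structure. The standard complex Weierstrass form must be replaced by its real analogue so that the resulting $A_1$ is real and its spectrum matches the finite eigenvalues of the pencil; this step is classical but must be cited (or, alternatively, constructed by first using a real equivalence to isolate $\operatorname{null}(E)$ and then exploiting the index-one condition $\operatorname{rank}[E,\, AU_\infty]=n$, with $U_\infty$ spanning $\operatorname{null}(E)$, to normalize the infinite block to $I_{n-r}$). Everything else then reduces to Theorem~\ref{thm:newstabmatchar} applied to the finite block.
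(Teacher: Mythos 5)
Your proof is correct and follows essentially the same route as the paper: both directions reduce to Theorem~\ref{thm:newstabmatchar} via the block-diagonal quasi-Weierstrass form $E = W\operatorname{diag}(I_r,0)T$, $A = W\operatorname{diag}(A_1,I_{n-r})T$ for regular index-one pairs. Your explicit determinant computation in the ``if'' direction and your remark that one must use the \emph{real} version of this canonical decomposition are details the paper glosses over by simply citing Gantmacher, but they do not change the substance of the argument.
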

\begin{proof} For a regular index one pair $(E,A)$,
there exist invertible matrices $W,T\in \R^{n,n}$ such that
\begin{equation}
E=W \begin{bmatrix}   I_r  & 0 \\   0 & 0 \end{bmatrix}T \qquad \text{ and } \qquad
A=W\begin{bmatrix}  \tilde A  & 0 \\  0 & I_{n-r}\end{bmatrix}T,
\end{equation}
see~\cite{Gan59a}.
Further, the finite eigenvalues of $(E,A)$ and $\tilde A$ are the same because $\text{det}(\lambda E-A)=0$
if and only if
 $\text{det}(\lambda I_r-\tilde A)=0$.
Thus, by stability of $\tilde A$ and Theorem~\ref{thm:newstabmatchar}, it follows that 
there exist $S,U,N \in \R^{r,r}$ such that
$S$ is invertible, $U^{\top}U=I_r$, $N\succeq 0$, ${\|N\|}_2\leq 1$, and $\tilde A =S^{-1}UNS$.\\
Conversely, it is easy to see that any matrix pair $(E,A)$ in the form~\eqref{eq:firstreform} is regular and of index one.
The stability of $(E,A)$ follows from Theorem~\ref{thm:newstabmatchar} as the matrix $S^{-1}UNS$ is stable.
\end{proof}

For a standard pair $(I_n,A)$ (with $E=I_n$), Theorem~\ref{thm:reform1} coincides with
Theorem~\ref{thm:newstabmatchar} as in this case $W$ and $T$ can be chosen to be the identity matrix which yields $A=S^{-1}UNS$.
Note that the matrix $S$ is invertible in Theorem~\ref{thm:reform1} and therefore it can be absorbed in $W$ and $T$. The advantage is that this reduces the number of variables in the corresponding optimization problem.
\begin{corollary}\label{thm:reform2}
Let $E,A\in \R^{n,n}$ be such that $\text{rank}(E)=r$. Then
$(E,A)$ is admissible if and only if
there exist invertible matrices $T,W \in \R^{n,n}$, and $U,N\in \R^{r,r}$ with
$U^{\top}U=I_r$, $N\succeq 0$ and ${\|N\|}_2\leq 1$ such that
\begin{equation}\label{eq:secreform}
E=W \begin{bmatrix}   I_r  & 0 \\   0 & 0 \end{bmatrix}T, \quad
\text{and}
\quad
A=W\begin{bmatrix}  UN & 0 \\  0 & I_{n-r}\end{bmatrix}T.
\end{equation}
\end{corollary}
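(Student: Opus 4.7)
The plan is to derive Corollary~\ref{thm:reform2} as a direct consequence of Theorem~\ref{thm:reform1}, by observing that the inner invertible matrix $S$ appearing in the block $S^{-1}UNS$ can be absorbed into the surrounding invertible factors $W$ and $T$ without disrupting the structure.

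For the ``only if'' direction, suppose $(E,A)$ is admissible with $\text{rank}(E)=r$. By Theorem~\ref{thm:reform1}, there exist invertible $W,T\in \R^{n,n}$, invertible $S\in \R^{r,r}$, and $U,N\in \R^{r,r}$ with $U^{\top}U=I_r$, $N\succeq 0$, $\|N\|_2\le 1$ such that
\[
E=W \begin{bmatrix} I_r & 0 \\ 0 & 0 \end{bmatrix}T, \qquad
A=W \begin{bmatrix} S^{-1}UNS & 0 \\ 0 & I_{n-r} \end{bmatrix}T.
\]
Define the block-diagonal absorption
\[
\widetilde W := W\begin{bmatrix} S^{-1} & 0 \\ 0 & I_{n-r} \end{bmatrix}, \qquad
\widetilde T := \begin{bmatrix} S & 0 \\ 0 & I_{n-r} \end{bmatrix}T.
\]
Both $\widetilde W$ and $\widetilde T$ are invertible as products of invertible matrices (recall $S$ is invertible by hypothesis). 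A direct block multiplication then gives
\[
\widetilde W\begin{bmatrix} I_r & 0 \\ 0 & 0 \end{bmatrix}\widetilde T
=W\begin{bmatrix} S^{-1}\,I_r\,S & 0 \\ 0 & 0 \end{bmatrix}T
=W\begin{bmatrix} I_r & 0 \\ 0 & 0 \end{bmatrix}T=E,
\]
\[
\widetilde W\begin{bmatrix} UN & 0 \\ 0 & I_{n-r} \end{bmatrix}\widetilde T
=W\begin{bmatrix} S^{-1}UN\,S & 0 \\ 0 & I_{n-r} \end{bmatrix}T=A,
\]
which yields the desired representation with the same $U,N$ (and hence the same spectral/orthogonality conditions) as in Theorem~\ref{thm:reform1}.

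For the converse, if $(E,A)$ admits the form \eqref{eq:secreform}, simply apply Theorem~\ref{thm:reform1} with the choice $S=I_r$: the conditions on $W,T,U,N$ in the corollary are exactly those required there, so admissibility follows immediately.

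No serious obstacle arises; this is essentially a reparametrization that reduces the number of free variables in the optimization reformulation of \eqref{restprob}. The only point requiring verification is that absorbing $S$ preserves invertibility of the outer factors, which is clear from the block-diagonal structure. The benefit, as noted in the surrounding text, is that one may henceforth parametrize ${\mathbb S}_d^r$ by the quadruple $(T,W,U,N)$ with two fewer matrix variables.
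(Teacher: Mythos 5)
Your proof is correct and takes exactly the route the paper intends: the paper itself gives no formal proof of this corollary, merely remarking after Theorem~\ref{thm:reform1} that ``the matrix $S$ is invertible \dots and therefore it can be absorbed in $W$ and $T$,'' which is precisely the block-diagonal absorption you carry out explicitly (together with the trivial converse via $S=I_r$).
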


In view of Corollary~\ref{thm:reform2}, the set ${\mathbb S}^r_d$ of restricted rank admissible pairs
can be characterized in terms of matrix pairs~\eqref{eq:secreform}, that is,
\begin{eqnarray*}
&{\mathbb S}^r_d = \Bigg \{\left(W \mat{cc}  I_r  & 0 \\   0 & 0 \rix T,
W\mat{cc}  UN  & 0 \\  0 & I_{n-r}\rix T\right):~\text{invertible}~ T,W \in \R^{n,n},\\
& \hspace{6cm} U,N\in\R^{r,r},U^{\top}U=I_r,N\succeq 0,{\|N\|}_2\leq 1 \Bigg \}.
\end{eqnarray*}
This parametrization allows us to reformulate problem~\eqref{restprob} as 
\begin{equation}\label{eq:reform_prob}
 \inf_{W,T \in \R^{n,n},\,U, N\in \R^{r,r},\,U^{\top}U=I_r,\, {\|N\|}_2\leq 1} \;  f(W,T,U,N),
\end{equation}
where
\[
f(W,T,U,N)=
{\left\|E-W \mat{cc}  I_r  & 0 \\   0 & 0 \rix T\right\|}_F^2+
{\left\|A-W\mat{cc}  UN  & 0 \\  0 & I_{n-r}\rix T\right\|}_F^2 .
\]
An advantage of this reformulation over~\eqref{restprob} is that it is relatively easy to project onto the feasible set of~\eqref{eq:reform_prob}.
Thus methods like  BCD,  PGD (Algorithm~\ref{algo:gradient}) or FGM (Algorithm~\ref{algo:fgm}) can be used to estimate~\eqref{eq:reform_prob}, see~\cite{gillis2020note} for more details on optimization methods. 

For the initialization, we restrict ourselves to the identity initialization: We take $W=T=I_n$ and $(U,N)$ as the optimal solution of
\[
\min_{(U,N) \text{ s.t. } U^\top U=I_r, \|N\| \leq 1} {\|A_{1:r,1:r}-UN\|}_F^2.
\]
In this particular case, it can be computed explicitly using the polar decomposition of $A_{1:r,1:r}$~\cite{gillis2020note}.

\subsection{Numerical example}
 
 Let us consider the 5-by-5 matrix $A$ from~\eqref{matrixAGP}. 
As we had seen in Section~\ref{sec:numexpdstable}, 
The nearest stable matrix to $A$ had a relative error of 26.19\%, with $\| A - S^{-1} UN S\|_F^2 = 0.5672$. 

Now, allowing us to approximate the standard system $(I_n,A)$ with a descriptor system with \mbox{$r=n$}, we obtain a nearby descriptor system $(M,X)$ with 
$\| I_n - M\|_F^2 + \| A - X \|_F^2 = 0.1869$, which is significantly smaller than when imposing $M=I_n$. 
We have  
\[ M = \left( \begin{array}{ccccc} 
 1.0263 &  0.0053 &  0.0442 &  -0.0281 &  0.0414 \\ 
 0.0411 &  0.9884 &  0.0072 &  -0.0262 &  0.0322 \\ 
 0.0145 &  -0.0097 &  0.9852 &  -0.0043 &  0.0022 \\ 
 0.0859 &  0.0430 &  0.2248 &  0.8851 &  0.1779 \\ 
 0.0187 &  0.0046 &  0.0342 &  -0.0208 &  1.0309 \\ 
\end{array} \right), 
\] 
and 
\[ 
X = \left( \begin{array}{ccccc} 
 0.6874 &  0.1915 &  0.0602 &  0.5186 &  0.9703 \\ 
 0.2588 &  0.6115 &  0.1923 &  0.8264 &  0.2675 \\ 
 0.4792 &  0.7111 &  0.9125 &  1.0087 &  0.4922 \\ 
 0.0886 &  0.0395 &  0.0997 &  0.8635 &  0.1864 \\ 
 0.7920 &  0.1928 &  0.8693 &  0.3134 &  0.1783 \\ 
\end{array} \right), 
\]  
whose generalized eigenvalues are 
\[
\{ -0.4761, 
   0.9937 \pm 0.0518i, 
   0.9126, 
   0.4192  \}. 
\]

\section{Applications: data driven system identification} \label{sec:appldiscretesys}

	In the paper \cite{mamakoukas2020learning}, Mamakoukas et al.\ consider the problem of learning a discrete LTI system, of the form, 
	\[
	y_t := x_{t+1} = A x_t + B u_t , \text{ for } t=0,1,2,\dots 
	 \] 
	from observations. More precisely, given a set of observations, $(x_t,y_t)$ for $t=0,1,2,\dots,T$, the goal is to recover $A$ and $B$.  
Defining the matrices $X = [x_0, x_1, \dots, x_T]$, and similarly for $Y$ and $U$, the least squares solution to that problem is the optimal solution of 
	\[
	\min_{A, B} \| Y - AX - BU \|_F^2. 
	\]
	This is an unconstrained least squares problem with a closed-form solution, namely $[A_{\text{ls}}, B_{\text{ls}}] = Y [X;U]^\dagger$, 
	where $\dagger$ denotes the Moore-Penrose inverse. 
However, this solution does not take into account prior information, such as stability of the sought system. 
	In~\cite{mamakoukas2020learning}, authors used the reformulation from Section~\ref{sec:nsm} to reformulate the above problem by taking stability explicitly into account: Substituting 
	$A = S^{-1}U N S$, we obtain the following problem that naturally takes stability into account via a convex feasible set: 
	\[
	\min_{S \succ 0,U \text{orthogonal}, N \succeq 0, \|N\| \leq 1} 
	\| Y - S^{-1}U N S X - BU \|_F^2. 
	\]
	Using an FGM lead them to a new algorithm for data-driven system identification that outperform the state of the art, achieving order-of-magnitude improvement, both in terms of reconstruction error and computational load (time and memory requirements); see the numerical experiments in~\cite{mamakoukas2020learning}. A video presenting the paper is available from \url{https://slideslive.com/38936948}, while the code, and videos of illustrative examples are available from \url{https://github.com/giorgosmamakoukas/MemoryEfficientStableLDS}. 
 
	In a follow-up work, Mamakoukas et al.\  \cite{mamakoukas2020learning2} learned  data-driven stable Koopman operators (which are infinite-dimensional linear representations of general nonlinear systems) relying on the same characterization; 
	see \url{https://sites.google.com/view/learning-stable-koopman} for more details.



\newpage

\small 

\bibliographystyle{spmpsci}
\bibliography{Bibliographymatrixnearness}

\begin{thebibliography}{100}
\providecommand{\url}[1]{{#1}}
\providecommand{\urlprefix}{URL }
\expandafter\ifx\csname urlstyle\endcsname\relax
  \providecommand{\doi}[1]{DOI~\discretionary{}{}{}#1}\else
  \providecommand{\doi}{DOI~\discretionary{}{}{}\begingroup
  \urlstyle{rm}\Url}\fi

\bibitem{AlaBKMM11}
Alam, R., Bora, S., Karow, M., Mehrmann, V., Moro, J.: Perturbation theory for
  {H}amiltonian matrices and the distance to bounded-realness.
\newblock SIAM Journal on Matrix Analysis and Applications \textbf{32}(2),
  484--514 (2011)

\bibitem{AndV73}
Anderson, B., Vongpanitlerd, S.: Network Analysis and Synthesis.
\newblock Prentice-Hall, Englewood Cliffs, New Jersey (1973)

\bibitem{Ant05}
Antoulas, A.C.: Approximation of large-scale dynamical systems.
\newblock SIAM (2005)

\bibitem{Antm97}
Antsaklis, P.J., Michel, A.N.: Linear systems.
\newblock McGraw-Hill, New York (1997)

\bibitem{AstOV10}
Astolfi, A., Ortega, R., Venkatraman, A.: A globally exponentially convergent
  immersion and invariance speed observer for mechanical systems with
  non-holonomic constraints.
\newblock Automatica \textbf{46}(1), 182--189 (2010)

\bibitem{BeaMX15_ppt}
Beattie, C., Mehrmann, V., Xu, H.: Port-{H}amiltonian realizations of linear
  time invariant systems.
\newblock Preprint 23-2015, Institut f\"ur Mathematik, TU Berlin (2015)

\bibitem{BeaMXZ17_ppt}
Beattie, C., Mehrmann, V., Xu, H., Zwart, H.: Linear port-{H}amiltonian
  descriptor systems.
\newblock Math. Control Signals Syst. \textbf{30}(17) (2018).
\newblock \doi{10.1007/s00498-018-0223-3}

\bibitem{Bertsekas99b}
Bertsekas, D.: Corrections for the book nonlinear programming: Second edition
  (1999).
\newblock Available at \url{http://www.athenasc.com/nlperrata.pdf}

\bibitem{Bertsekas99}
Bertsekas, D.: Nonlinear Programming: Second Edition.
\newblock Athena Scientific, Massachusetts (1999)

\bibitem{blondel2000survey}
Blondel, V.D., Tsitsiklis, J.N.: A survey of computational complexity results
  in systems and control.
\newblock Automatica \textbf{36}(9), 1249--1274 (2000)

\bibitem{bolte2014proximal}
Bolte, J., Sabach, S., Teboulle, M.: Proximal alternating linearized
  minimization for nonconvex and nonsmooth problems.
\newblock Mathematical Programming \textbf{146}(1-2), 459--494 (2014)

\bibitem{boumal2014manopt}
Boumal, N., Mishra, B., Absil, P.A., Sepulchre, R.: Manopt, a matlab toolbox
  for optimization on manifolds.
\newblock The Journal of Machine Learning Research \textbf{15}(1), 1455--1459
  (2014)

\bibitem{boumal2016non}
Boumal, N., Voroninski, V., Bandeira, A.: The non-convex burer-monteiro
  approach works on smooth semidefinite programs.
\newblock Advances in Neural Information Processing Systems \textbf{29},
  2757--2765 (2016)

\bibitem{BoyGFB94}
Boyd, S., El~Ghaoui, L., Feron, E., Balakrishnan, V.: Linear Matrix
  Inequalities in System and Control Theory.
\newblock Society for Industrial and Applied Mathematics (1994).
\newblock \doi{10.1137/1.9781611970777}.
\newblock \urlprefix\url{http://epubs.siam.org/doi/abs/10.1137/1.9781611970777}

\bibitem{brock1968optimal}
Brock, J.E.: Optimal matrices describing linear systems.
\newblock AIAA Journal \textbf{6}(7), 1292--1296 (1968)

\bibitem{Broc15}
Brockett, R.W.: Finite Dimensional Linear Systems.
\newblock Society for Industrial and Applied Mathematics, Philadelphia, PA
  (2015).
\newblock \doi{10.1137/1.9781611973884}

\bibitem{BruS13}
Brull, T., Schr\"oder, C.: Dissipativity enforcement via perturbation of
  para-{H}ermitian pencils.
\newblock IEEE Transactions on Circuits and Systems I: Regular Papers
  \textbf{60}(1), 164--177 (2013)

\bibitem{burer2003nonlinear}
Burer, S., Monteiro, R.D.: A nonlinear programming algorithm for solving
  semidefinite programs via low-rank factorization.
\newblock Mathematical Programming \textbf{95}(2), 329--357 (2003)

\bibitem{Bye88}
Byers, R.: A bisection method for measuring the distance of a stable to
  unstable matrices.
\newblock SIAM J. on Scientific and Statistical Computing \textbf{9}, 875--881
  (1988)

\bibitem{ByeHM98}
Byers, R., He, C., Mehrmann, V.: Where is the nearest non-regular pencil?
\newblock Linear Algebra Appl. \textbf{285}(1-3), 81--105 (1998).
\newblock \doi{10.1016/S0024-3795(98)10122-2}

\bibitem{ByeN93}
Byers, R., Nichols, N.: On the stability radius of a generalized state-space
  system.
\newblock Linear Algebra and its Applications \textbf{188}, 113--134 (1993)

\bibitem{Cam80}
Campbell, S.L.: Singular Systems of Differential Equations.
\newblock Pitman, London (1980)

\bibitem{CheMH19}
Cherifi, K., Mehrmann, V., Hariche, K.: Numerical methods to compute a minimal
  realization of a port-{Hamiltonian} system.
\newblock arXiv preprint arXiv:1903.07042  (2019)

\bibitem{ChilG96}
Chilali, M., Gahinet, P.: {${\rm H_{\infty}}$} design with pole placement
  constraints: an {LMI} approach.
\newblock IEEE Transactions on Automatic Control \textbf{41}(3), 358--367
  (1996)

\bibitem{choudhary2020approximating}
Choudhary, N., Gillis, N., Sharma, P.: On approximating the nearest
  $\omega$-stable matrix.
\newblock Numerical Linear Algebra with Applications \textbf{27}(3), e2282
  (2020)

\bibitem{CoePS99}
Coelho, C., Phillips, J., Silveira, L.: Robust rational function approximation
  algorithm for model generation.
\newblock In: Proceedings 1999 Design Automation Conference (Cat. No.
  99CH36361), pp. 207--212 (1999)

\bibitem{conn2000trust}
Conn, A.R., Gould, N.I., Toint, P.L.: Trust region methods.
\newblock SIAM (2000)

\bibitem{cvx}
CVX~Research, I.: {CVX}: Matlab software for disciplined convex programming,
  version 2.0.
\newblock \url{http://cvxr.com/cvx} (2012)

\bibitem{DesV75}
Desoer, C., Vidyasagar, M.: Feedback Systems: Input-Output Properties.
\newblock Academic Press, Orlando, FL, USA (1975).
\newblock \doi{10.1137/1.9780898719055}.
\newblock \urlprefix\url{http://epubs.siam.org/doi/abs/10.1137/1.9780898719055}

\bibitem{DuLM13}
Du, N., Linh, V., Mehrmann, V.: Robust stability of differential-algebraic
  equations.
\newblock In: Surveys in Differential-Algebraic Equations {I}, pp. 63--95.
  Berlin: Springer (2013).
\newblock \doi{10.1007/978-3-642-34928-7_2}

\bibitem{Dua10}
Duan, G.R.: Analysis and Design of Descriptor Linear Systems.
\newblock Springer-Verlag, New York (2010)

\bibitem{Eisr84}
Eising, R.: The distance between a system and the set of uncontrollable
  systems.
\newblock In: P.A. Fuhrmann (ed.) Mathematical Theory of Networks and Systems,
  pp. 303--314. Springer Berlin Heidelberg, Berlin, Heidelberg (1984)

\bibitem{EmmV13}
Emmrich, E., Mehrmann, V.: Operator differential-algebraic equations arising in
  fluid dynamics.
\newblock Comput. Methods Appl. Math. \textbf{13}, 443--470 (2013).
\newblock \doi{https://doi.org/10.1515/cmam-2013-0018}

\bibitem{FreJ11}
Freund, R.: The {SPRIM} algorithm for structure-preserving order reduction of
  general {RLC} circuits.
\newblock in Model Reduction for Circuit Simulation, Springer, New York (2011)

\bibitem{FreJ04}
Freund, R., Jarre, F.: An extension of the positive real lemma to descriptor
  systems.
\newblock Optimization Methods and Software \textbf{19}(1), 69--87 (2004)

\bibitem{FreJ07}
Freund, R., Jarre, F., Vogelbusch, C.H.: Nonlinear semidefinite programming:
  sensitivity, convergence, and an application in passive reduced-order
  modeling.
\newblock Mathematical Programming \textbf{109}(2-3), 581--611 (2007)

\bibitem{FujSS12}
Fujimoto, K., Sakai, S., Sugie, T.: Passivity based control of a class of
  {Hamiltonian} systems with nonholonomic constraints.
\newblock Automatica \textbf{48}(12), 3054--3063 (2012)

\bibitem{gangsaas1986application}
Gangsaas, D., Bruce, K., Blight, J., Ly, U.L.: Application of modem synthesis
  to aircraft control: Three case studies.
\newblock IEEE Transactions on Automatic Control \textbf{31}(11), 995--1014
  (1986)

\bibitem{Gan59a}
Gantmacher, F.: The Theory of Matrices I.
\newblock Chelsea Publishing Company, New York, NY (1959)

\bibitem{gillis2020book}
Gillis, N.: Nonnegative Matrix Factorization.
\newblock SIAM, Philadelphia (2020)

\bibitem{gillis2019approximating}
Gillis, N., Karow, M., Sharma, P.: Approximating the nearest stable
  discrete-time system.
\newblock Linear Algebra and its Applications \textbf{573}, 37--53 (2019)

\bibitem{gillis2020note}
Gillis, N., Karow, M., Sharma, P.: A note on approximating the nearest stable
  discrete-time descriptor systems with fixed rank.
\newblock Applied Numerical Mathematics \textbf{148}, 131--139 (2020)

\bibitem{gillis2018computing}
Gillis, N., Mehrmann, V., Sharma, P.: Computing the nearest stable matrix
  pairs.
\newblock Numerical Linear Algebra with Applications \textbf{25}(5), e2153
  (2018)

\bibitem{GilS16}
Gillis, N., Sharma, P.: On computing the distance to stability for matrices
  using linear dissipative {H}amiltonian systems.
\newblock Automatica \textbf{85}, 113--121 (2017)

\bibitem{gillis2018finding}
Gillis, N., Sharma, P.: Finding the nearest positive-real system.
\newblock SIAM Journal on Numerical Analysis \textbf{56}(2), 1022--1047 (2018)

\bibitem{GilS18}
Gillis, N., Sharma, P.: A semi-analytical approach for the positive
  semidefinite procrustes problem.
\newblock Linear Algebra and its Applications \textbf{540}, 112--137 (2018)

\bibitem{gillis2020minimal}
Gillis, N., Sharma, P.: Minimal-norm static feedbacks using dissipative
  {Hamiltonian} matrices.
\newblock Linear Algebra and its Applications \textbf{623}, 258--281 (2021).
\newblock Special issue in honor of {Paul Van Dooren}

\bibitem{GohLR82}
Gohberg, I., Lancaster, P., Rodman, L.: Matrix Polynomials.
\newblock Classics in Applied Mathematics. Society for Industrial and Applied
  Mathematics, Philadelphia (1982).
\newblock \urlprefix\url{https://books.google.be/books?id=KwEItnMvwbgC}

\bibitem{GolSBM03}
Golo, G., {van der}~Schaft, A., Breedveld, P., Maschke, B.: {H}amiltonian
  formulation of bond graphs.
\newblock In: A.R. R.~Johansson (ed.) Nonlinear and Hybrid Systems in
  Automotive Control, pp. 351--372. Springer-Verlag, Heidelberg, Germany (2003)

\bibitem{Tal04}
Grivet-Talocia, S.: Passivity enforcement via perturbation of {H}amiltonian
  matrices.
\newblock IEEE Transactions on Circuits and Systems I: Regular Papers
  \textbf{51}(9), 1755--1769 (2004)

\bibitem{GugPBS12}
Gugercin, S., Polyuga, R., Beattie, C., Van~der Schaft, A.:
  Structure-preserving tangential interpolation for model reduction of
  port-{Hamiltonian} systems.
\newblock Automatica \textbf{48}(9), 1963--1974 (2012)

\bibitem{GugKL15}
Guglielmi, N., Kressner, D., Lubich, C.: Low rank differential equations for
  {H}amiltonian matrix nearness problems.
\newblock Numerische Mathematik \textbf{129}(2), 279--319 (2015)

\bibitem{guglielmi2017matrix}
Guglielmi, N., Lubich, C.: Matrix stabilization using differential equations.
\newblock SIAM Journal on Numerical Analysis \textbf{55}(6), 3097--3119 (2017)

\bibitem{GugLM16}
Guglielmi, N., Lubich, C., Mehrmann, V.: On the nearest singular matrix pencil.
\newblock SIAM Journal on Matrix Analysis and Applications \textbf{38}(3),
  776--806 (2017)

\bibitem{GugP18}
Guglielmi, N., Protasov, V.Y.: On the closest stable/unstable nonnegative
  matrix and related stability radii.
\newblock SIAM J. Matrix Anal. Appl. \textbf{39}(4), 1642--1669 (2018)

\bibitem{GusS01}
Gustavsen, B., Semlyen, A.: Enforcing passivity for admittance matrices
  approximated by rational functions.
\newblock IEEE Transactions on Power Systems \textbf{16}(1), 97--104 (2001)

\bibitem{HadB91}
Haddad, M., Bernstein, D.: Explicit construction of quadratic lyapunov
  functions for the small gain, positivity, circle and popov theorems and their
  application to robust stability.
\newblock In: Proc. of the 30th IEEE Conf. on Decision and Control, pp.
  2618--2623 vol.3 (1991)

\bibitem{hien2019extrapolNMF}
Hien, L.T.K., Gillis, N., Patrinos, P.: Inertial block proximal methods for
  non-convex non-smooth optimization.
\newblock In: Proceedings of the 37th International Conference on Machine
  learning (ICML) (2020)

\bibitem{hien2020inertial}
Hien, L.T.K., Phan, D.N., Gillis, N.: An inertial block majorization
  minimization framework for nonsmooth nonconvex optimization.
\newblock arXiv preprint arXiv:2010.12133  (2020)

\bibitem{Hig88b}
Higham, N.: Computing a nearest symmetric positive semidefinite matrix.
\newblock Linear Algebra and its Applications \textbf{103}, 103--118 (1988)

\bibitem{Hig88a}
Higham, N.: Matrix nearness problems and applications.
\newblock In: M.~Gover, e.~S.~Barnett (eds.) Applications of Matrix Theory, pp.
  1--27. Oxford University Press (1989)

\bibitem{HinP86}
Hinrichsen, D., Pritchard, A.: Stability radii of linear systems.
\newblock Systems Control Lett. \textbf{7}, 1--10 (1986)

\bibitem{HorJ85}
Horn, R., Johnson, C.: Matrix Analysis.
\newblock Cambridge University Press, Cambridge (1985)

\bibitem{HuaIMS99}
Huang, C.H., Ioannou, P., Maroulas, J., Safonov, M.: Design of strictly
  positive real systems using constant output feedback.
\newblock IEEE Trans. on Automatic Control \textbf{44}(3), 569--573 (1999)

\bibitem{IdaB97}
Ida, N., Bastos, P.A.: Electromagnetics and Calculation of Fields.
\newblock Springer-Verlag, New York (1997)

\bibitem{IonT87}
Ioannou, P., Tao, G.: Frequency domain conditions for strictly positive real
  functions.
\newblock IEEE Trans. on Automatic Control \textbf{32}(1), 53--54 (1987)

\bibitem{JacZ12}
Jacob, B., Zwart, H.: Linear Port-{Hamiltonian} Systems on Infinite-dimensional
  Spaces.
\newblock Springer, Berlin (2012)

\bibitem{Jos89}
Joshi, S.: Control of Large Flexible Space Structures (Lecture Notes in Control
  and Information Sciences), vol. 131.
\newblock Springer-Verlag Berlin Heidelberg (1989)

\bibitem{Kai80}
Kailath, T.: Linear Systems.
\newblock Prentice-Hall, Englewood Cliffs, NJ (1980)

\bibitem{KauNC89}
Kautsky, J., Nichols, N., Chu, E.W.: Robust pole assignment in singular control
  systems.
\newblock Linear Algebra and its Applications \textbf{121}, 9--37 (1989)

\bibitem{Kha92}
Khalil, H.K.: Nonlinear Systems.
\newblock Macmillan, New York (1992)

\bibitem{Kle13}
Kleijn, C.: 20-sim 4c 2.1 reference manual.
\newblock Controlab Products B.V  (2013)

\bibitem{KunM06}
Kunkel, P., Mehrmann, V.: Differential-Algebraic Equations: Analysis and
  Numerical Solution.
\newblock EMS textbooks in mathematics. European Mathematical Society (2006)

\bibitem{LanT85}
Lancaster, P., Tismenetsky, M.: The Theory of Matrices, 2nd edn.
\newblock Academic Press, Orlando (1985)

\bibitem{leibfritz2004compleib}
Leibfritz, F.: Compleib, constraint matrix-optimization problem library-a
  collection of test examples for nonlinear semidefinite programs, control
  system design and related problems.
\newblock Dept. Math., Univ. Trier, Trier, Germany, Tech. Rep  (2004)

\bibitem{LozBEM13}
Lozano, R., Brogliato, B., Egeland, O., Maschke, B.: Dissipative systems
  analysis and control: theory and applications.
\newblock Springer Science \& Business Media (2013)

\bibitem{LozJ90}
Lozano-Leal, R., Joshi, S.: Strictly positive real transfer functions
  revisited.
\newblock IEEE Trans. on Automatic Control \textbf{35}(11), 1243--1245 (1990)

\bibitem{mamakoukas2020learning2}
Mamakoukas, G., Abraham, I., Murphey, T.D.: Learning data-driven stable
  {Koopman} operators.
\newblock arXiv preprint arXiv:2005.04291  (2020)

\bibitem{mamakoukas2020learning}
Mamakoukas, G., Xherija, O., Murphey, T.D.: Learning memory-efficient stable
  linear dynamical systems for prediction and control.
\newblock In: 34th Conference on Neural Information Processing Systems
  (NeurIPS). Vancouver, Canada (2020)

\bibitem{Markovsky2019}
Markovsky, I.: Low-Rank Approximation: Algorithms, Implementation,
  Applications, 2nd edition edn.
\newblock Springer (2019).
\newblock
  \urlprefix\url{http://homepages.vub.ac.be/~imarkovs/publications.html}

\bibitem{MasSB92}
Maschke, B., {Van Der Schaft}, A., Breedveld, P.: An intrinsic {Hamiltonian}
  formulation of network dynamics: non-standard poisson structures and
  gyrators.
\newblock Journal of the Franklin Institute \textbf{329}(5), 923--966 (1992)

\bibitem{MasKAS97}
Masubuchi, I., Kamitane, Y., Ohara, A., Suda, N.: ${H}_{\infty}$ control for
  descriptor systems: A matrix inequalities approach.
\newblock Automatica \textbf{33}(4), 669--673 (1997)

\bibitem{MehMS16}
Mehl, C., Mehrmann, V., Sharma, P.: Stability radii for linear {H}amiltonian
  systems with dissipation under structure-preserving perturbations.
\newblock SIAM Journal on Matrix Analysis and Applications \textbf{37}(4),
  1625--1654 (2016)

\bibitem{MehMS16b}
Mehl, C., Mehrmann, V., Sharma, P.: Stability radii for real linear
  {Hamiltonian} systems with perturbed dissipation.
\newblock BIT Numerical Mathematics \textbf{57}(3), 811--843 (2017)

\bibitem{MehMW15}
Mehl, C., Mehrmann, V., Wojtylak, M.: On the distance to singularity via low
  rank perturbations.
\newblock Operators and Matrices \textbf{9}, 733--772 (2015)

\bibitem{MehMW18}
Mehl, C., Mehrmann, V., Wojtylak, M.: Linear algebra properties of dissipative
  {Hamiltonian} descriptor systems.
\newblock SIAM Journal on Matrix Analysis and Applications \textbf{39}(3),
  1489--1519 (2018)

\bibitem{MehMW21}
Mehl, C., Mehrmann, V., Wojtylak, M.: Distance problems for dissipative
  {Hamiltonian} systems and related matrix polynomials.
\newblock Linear Algebra and its Applications \textbf{623}, 335--366 (2021).
\newblock \doi{https://doi.org/10.1016/j.laa.2020.05.026}.
\newblock Special issue in honor of Paul Van Dooren

\bibitem{Meh91}
Mehrmann, V.: The Autonomous Linear Quadratic Control Problem: Theory and
  Numerical Solution.
\newblock Lecture Notes in Control and Information Sciences. Springer Berlin
  Heidelberg (1991).
\newblock \urlprefix\url{https://books.google.be/books?id=VAKrAAAAIAAJ}

\bibitem{MehS11}
Mehrmann, V., Schr${\rm \ddot{o}}$der, C.: Nonlinear eigenvalue and frequency
  response problems in industrial practice.
\newblock J. Math. Industry \textbf{1}(18) (2011)

\bibitem{MehV20}
Mehrmann, V., Van~Dooren, P.: Optimal robustness of port-{Hamiltonian} systems.
\newblock SIAM Journal on Matrix Analysis and Applications \textbf{41}(1),
  134--151 (2020)

\bibitem{MosL91}
Moses, R., Liu, D.: Determining the closest stable polynomial to an unstable
  one.
\newblock IEEE Trans. on Signal Processing \textbf{39}(4), 901--906 (1991).
\newblock \doi{10.1109/78.80912}.
\newblock \urlprefix\url{http://dx.doi.org/10.1109/78.80912}

\bibitem{nes83}
Nesterov, Y.: A method of solving a convex programming problem with convergence
  rate o(1/k2).
\newblock Soviet Mathematics Doklady \textbf{27}(2), 372--376 (1983)

\bibitem{Nes04}
Nesterov, Y.: Introductory lectures on convex optimization: A basic course,
  vol.~87.
\newblock Springer Science \& Business Media (2004)

\bibitem{nesterov1994interior}
Nesterov, Y., Nemirovskii, A.: Interior-point polynomial algorithms in convex
  programming.
\newblock SIAM, Philadelphia (1994)

\bibitem{NesP20}
Nesterov, Y., Protasov, V.Y.: Computing closest stable nonnegative matrix.
\newblock SIAM Journal on Matrix Analysis and Applications \textbf{41}(1),
  1--28 (2020)

\bibitem{noferini2020nearest}
Noferini, V., Poloni, F.: Nearest {\rm $\Omega$}-stable matrix via {Riemannian}
  optimization.
\newblock Numerische Mathematik  (2021).
\newblock \doi{10.1007/s00211-021-01217-4}

\bibitem{o2015adaptive}
O’donoghue, B., Candes, E.: Adaptive restart for accelerated gradient
  schemes.
\newblock Foundations of computational mathematics \textbf{15}(3), 715--732
  (2015)

\bibitem{ONV13}
Orbandexivry, F.X., Nesterov, Y., Van~Dooren, P.: Nearest stable system using
  successive convex approximations.
\newblock Automatica \textbf{49}(5), 1195--1203 (2013)

\bibitem{OveV05}
Overton, M., Van~Dooren, P.: On computing the complex passivity radius.
\newblock In: Proceedings of the 44th IEEE Conference on Decision and Control,
  pp. 7960--7964 (2005).
\newblock \doi{10.1109/CDC.2005.1583449}

\bibitem{PacD93}
Packard, A., Doyle, J.: The complex structured singular value.
\newblock Automatica \textbf{29}(1), 71--109 (1993)

\bibitem{PolS10}
Polyuga, R., Van~der Schaft, A.: Structure preserving model reduction of
  port-{Hamiltonian} systems by moment matching at infinity.
\newblock Automatica \textbf{46}(4), 665--672 (2010)

\bibitem{Pop73}
Popov, V.: Hyperstability of Control Systems.
\newblock Springer-Verlag New York, Inc., Secaucus, NJ, USA (1973)

\bibitem{PraS21a}
Prajapati, A., Sharma, P.: Estimation to structured distances to singularity
  for matrix pencils with symmetry structures: A linear algebra-based approach.
\newblock arXiv preprint arXiv:2105.13656  (2021)

\bibitem{razaviyayn2013unified}
Razaviyayn, M., Hong, M., Luo, Z.Q.: A unified convergence analysis of block
  successive minimization methods for nonsmooth optimization.
\newblock SIAM Journal on Optimization \textbf{23}(2), 1126--1153 (2013)

\bibitem{Sch2014port}
van~der Schaft, A., Jeltsema, D., et~al.: Port-{H}amiltonian systems theory: An
  introductory overview.
\newblock Foundations and Trends{\textregistered} in Systems and Control
  \textbf{1}(2-3), 173--378 (2014)

\bibitem{Sch06}
Schaft, A.v.: Port-{H}amiltonian systems: an introductory survey.
\newblock In: J.V. M.~Sanz-Sole, J.~Verdura (eds.) Proc. of the International
  Congress of Mathematicians, vol. III, Invited Lectures, pp. 1339--1365.
  Madrid, Spain (2006)

\bibitem{SchT07}
Schr{\"o}der, C., Stykel, T.: Passivation of {LTI} systems.
\newblock Preprint \textbf{368} (2007)

\bibitem{singh2008unified}
Singh, A.P., Gordon, G.J.: A unified view of matrix factorization models.
\newblock In: Joint European Conference on Machine Learning and Knowledge
  Discovery in Databases, pp. 358--373. Springer (2008)

\bibitem{suffridge1993approximation}
Suffridge, T., Hayden, T.: Approximation by a hermitian positive semidefinite
  toeplitz matrix.
\newblock SIAM Journal on Matrix Analysis and Applications \textbf{14}(3),
  721--734 (1993)

\bibitem{SunKS94}
Sun, W., Khargonekar, P., Shim, D.: Solution to the positive real control
  problem for linear time-invariant systems.
\newblock IEEE Trans. on Automatic Control \textbf{39}(10), 2034--2046 (1994)

\bibitem{SyrADG97}
Syrmos, V.L., Abdallah, C.T., Dorato, P., Grigoriadis, K.: Static output
  feedback: A survey.
\newblock Automatica \textbf{33}(2), 125 -- 137 (1997)

\bibitem{toh1999sdpt3}
Toh, K.C., Todd, M., T{\"u}t{\"u}nc{\"u}, R.: {SDPT3}--a matlab software
  package for semidefinite programming, version 1.3.
\newblock Optimization methods and software \textbf{11}(1-4), 545--581 (1999)

\bibitem{Ferrarinotes}
Trecate, G.F.: Nonlinear systems state feedback control.
\newblock
  \urlprefix\url{http://sisdin.unipv.it/labsisdin/teaching/courses/ails/files/6-State_feedback_control_handout.pdf}

\bibitem{tutuncu2003solving}
T{\"u}t{\"u}nc{\"u}, R., Toh, K., Todd, M.: Solving
  semidefinite-quadratic-linear programs using {SDPT3}.
\newblock Mathematical programming \textbf{95}(2), 189--217 (2003)

\bibitem{Udell2016lowrank}
Udell, M., Horn, C., Zadeh, R., Boyd, S.: Generalized low rank models.
\newblock Foundations and Trends in Machine Learning \textbf{9}(1), 1--118
  (2016)

\bibitem{Sch13}
{van der}~Schaft, A.: Port-{H}amiltonian differential-algebraic systems.
\newblock In: Surveys in Differential-Algebraic Equations, pp. 173--226.
  Springer (2013)

\bibitem{SchM95}
{van der}~Schaft, A., Maschke, B.: The {H}amiltonian formulation of energy
  conserving physical systems with external ports.
\newblock Arch. Elektron. {\"U}bertragungstech. \textbf{45}, 362--371 (1995)

\bibitem{SchM02}
{van der}~Schaft, A., Maschke, B.: {H}amiltonian formulation of
  distributed-parameter systems with boundary energy flow.
\newblock J. Geom. Phys. \textbf{42}, 166--194 (2002)

\bibitem{SchM13}
{van der}~Schaft, A., Maschke, B.: Port-{H}amiltonian systems on graphs.
\newblock SIAM J. Control Optim. \textbf{51}, 906--937 (2013)

\bibitem{Vandenberghe1996}
Vandenberghe, L., Boyd, S.: Semidefinite programming.
\newblock SIAM Review \textbf{38}(1), 49--95 (1996)

\bibitem{Var95}
Varga, A.: On stabilization methods of descriptor systems.
\newblock Systems \& Control Letters \textbf{24}(2), 133--138 (1995)

\bibitem{VoiB11}
Voigt, M., Benner, P.: Passivity enforcement of descriptor systems via
  structured perturbation of {H}amiltonian matrix pencils.
\newblock In: Talk at Meeting of the GAMM Activity Group Dynamics and Control
  Theory, Linz (2011)

\bibitem{waldspurger2020rank}
Waldspurger, I., Waters, A.: Rank optimality for the burer--monteiro
  factorization.
\newblock SIAM journal on Optimization \textbf{30}(3), 2577--2602 (2020)

\bibitem{WanC96}
Wang, H.S., Chang, F.R.: The generalized state-space description of positive
  realness and bounded realness.
\newblock In: Proc. of the 39th Midwest Symp. on Circuits and Systems, vol.~2,
  pp. 893--896 (1996)

\bibitem{WanZKPW10}
Wang, Y., Zhang, Z., Koh, C., Pang, G., Wong, N.: {PEDS}: Passivity enforcement
  for descriptor systems via {H}amiltonian-symplectic matrix pencil
  perturbation.
\newblock In: 2010 IEEE/ACM Int. Conf. on Computer-Aided Design (ICCAD), pp.
  800--807 (2010)

\bibitem{Wen88}
Wen, J.: Time domain and frequency domain conditions for strict positive
  realness.
\newblock IEEE Trans. on Automatic Control \textbf{33}(10), 988--992 (1988)

\bibitem{Wil84}
Wilkinson, J.: Sensitivity of eigenvalues.
\newblock Utilitas Math. \textbf{25}, 5--76 (1984)

\bibitem{wolkowicz2012handbook}
Wolkowicz, H., Saigal, R., Vandenberghe, L.: Handbook of semidefinite
  programming: theory, algorithms, and applications, vol.~27.
\newblock Springer Science \& Business Media (2012)

\bibitem{wright1999numerical}
Wright, S., Nocedal, J.: Numerical optimization.
\newblock Springer Science  (1999)

\bibitem{wright2015coordinate}
Wright, S.J.: Coordinate descent algorithms.
\newblock Mathematical Programming \textbf{151}(1), 3--34 (2015)

\bibitem{xu2013block}
Xu, Y., Yin, W.: A block coordinate descent method for regularized multiconvex
  optimization with applications to nonnegative tensor factorization and
  completion.
\newblock SIAM Journal on Imaging Sciences \textbf{6}(3), 1758--1789 (2013)

\bibitem{xu2017globally}
Xu, Y., Yin, W.: A globally convergent algorithm for nonconvex optimization
  based on block coordinate update.
\newblock Journal of Scientific Computing \textbf{72}(2), 700--734 (2017)

\bibitem{Yips81}
Yip, E., Sincovec, R.: Solvability, controllability, and observability of
  continuous descriptor systems.
\newblock IEEE Transactions on Automatic Control \textbf{26}(3), 702--707
  (1981)

\bibitem{yurtsever2021scalable}
Yurtsever, A., Tropp, J.A., Fercoq, O., Udell, M., Cevher, V.: Scalable
  semidefinite programming.
\newblock SIAM Journal on Mathematics of Data Science \textbf{3}(1), 171--200
  (2021)

\bibitem{ZhaLX02}
Zhang, L., Lam, J., Xu, S.: On positive realness of descriptor systems.
\newblock IEEE Transactions on Circuits and Systems I: Fundamental Theory and
  Applications \textbf{49}(3), 401--407 (2002)

\bibitem{Zho11}
Zhou, T.: On nonsingularity verification of uncertain matrices over a
  quadratically constrained set.
\newblock IEEE Trans. on Automatic Control \textbf{56}(9), 2206--2212 (2011)

\end{thebibliography}

\newpage 

\normalsize


\end{document}